       \newtheorem{lemma}{\bf Lemma}[section]
       \newtheorem{theorem}{\bf Theorem}[section]
       \newtheorem{proposition}{\bf Proposition}[section]
       \newtheorem{definition}{\bf Definition}[section]
       \newtheorem{remark}{\bf Remark}[section]
       \numberwithin{equation}{section}
\begin{document}
\title{{\sl Towards sharp regularity: Full-dimensional tori in $ C^\infty $ vector fields over $ \mathbb{T}^\infty $}}
\author{Zhicheng Tong $^{\mathcal{z}}$, Yong Li $^{\mathcal{x}}$}

\renewcommand{\thefootnote}{}
\footnotetext{\hspace*{-6mm}

\begin{tabular}{l   l}	$^\mathcal{z}$~School of Mathematics, Jilin University, Changchun 130012, P. R.  China. \url{tongzc20@mails.jlu.edu.cn}\\	$^{\mathcal{x}}$~The corresponding author. School of Mathematics, Jilin University, Changchun 130012, P. R.  China; \\  Center for Mathematics and Interdisciplinary Sciences,  Northeast  Normal  University, Changchun 130024, \\P. R. China.  \url{liyong@jlu.edu.cn}
\end{tabular}}

\date{}
\maketitle

\begin{abstract}
We consider the $ C^1 $ linearization of a perturbed vector field $ \omega+P $ over the infinite-dimensional torus $ \mathbb{T}^\infty $, and determine the sharp regularity requirement of the perturbation $ P $ conjugating the unperturbed one $ \omega $ onto $ \omega-\tilde{\omega}+P $ via a small modifying term $ \tilde{\omega} $. We discuss the Diophantine type introduced by Bourgain,  investigate the universal nonresonance, and provide the weakest known  regularity of perturbations  for which KAM applies. Our results allow for lower regularity than analyticity such as  Gevrey regularity or even $ C^\infty $ regularity. We propose a new KAM scheme with a balancing sequence to overcome the non-polynomial nonresonance, differing from the usual Newtonian approach. Thereby, in addition to deriving the sharp Gevrey exponent along Diophantine nonresonance, we answer the fundamental question of what the minimum regularity required for KAM is in the infinite-dimensional setting: $ C^\infty $ regularity. Our linearization technique also applies to the quasi-periodic case over $ \mathbb{T}^n $.\\
\\
{\bf Keywords:} {Full-dimensional tori, non-Newtonian KAM iteration,   Diophantine nonresonance, Gevrey regularity, $ C^\infty $ regularity, sharp regularity}\\
{\bf2020 Mathematics Subject Classification:} {37K20, 37K55}
\end{abstract}

\tableofcontents

\section{Introduction}
\setcounter{footnote}{0}
\renewcommand{\thefootnote}{\fnsymbol{footnote}}
Since Moser's renowned work \cite{MR0147741}, a fundamental question  in KAM theory (and indeed in dynamical systems) has been: \textit{What is the minimum KAM regularity in the infinite-dimensional setting?}
In this paper, we will address this question and also derive some other results of independent interest. Let us first review the history and advancements of KAM theory, and then delve into the  profound background of the aforementioned  problem.

 The celebrated KAM theory established by Kolmogorov and Arnold \cite{MR0068687,MR0140699,MR0163025,MR0170705}, Moser \cite{MR0147741,MR0199523,MR0206461}, mainly concerns the preservation of invariant tori of a Hamiltonian function or a vector field under small perturbations, and  has a long  history of seventy years. So far, the KAM theory has been well developed and widely applied to a variety of  dynamical systems and PDEs. On these aspects, see Kuksin \cite{MR0911772},  Eliasson \cite{MR1001032}, P\"oschel \cite{MR1037110}, Wayne \cite{MR1040892}, Bourgain   \cite{MR1316975,MR1345016}, Kuksin-P\"oschel \cite{MR1370761} and Salamon \cite{MR2111297} for some  fundamental developments. As for recent works on PDEs involving  nonlinear Schr\"odinger equation (NLS), wave equation, beam equation as well as Euler equation via KAM approaches, we particularly mention the works of Bourgain \cite{MR2180074}, Khesin \textit{et al}. \cite{MR3269186}, Eliasson \textit{et al}. \cite{MR3579706},  Biasco \textit{et al}. \cite{MR4091501,MR4666300},	Berti \textit{et al}. \cite{MR4215196}, Montalto-Procesi \cite{MR4201442}, Zhang-Si \cite{MR4273175,MR4532963}, Guardia \textit{et al}.  \cite{MR4577970},   Cong \cite{MR4800925}\footnote{The main purpose of the present paper is to weaken \textit{the KAM regularity required for perturbed vector fields} over $ \mathbb{T}^\infty $ to $ C^\infty $, following Moser's renowned motivation for the finite-dimensional case \cite{MR0147741}, and the full-dimensional (or maximal) tori obtained are only $ C^1 $; while recently, Cong's  interesting work \cite{MR4800925} extends Bourgain's work \cite{MR2180074} on NLS, obtaining $ C^\infty $ full-dimensional tori whose radius satisfies a slower decay for any $ \sigma>2 $: $ I_n \sim {\rm e}^{-\ln ^\sigma|n|} $ as  $|n| \rightarrow \infty $. Therefore, the problems dealt with and the results obtained are entirely different.} and the references therein. We also refer to the overview of KAM theory for PDEs by Kuksin \cite{MR2070057}, and  finite and infinite dimensional systems by Chierchia-Procesi  \cite{MR4570702}. 
 
 However, as commented in \cite{MR1370761},  the abstract KAM results in infinite-dimensional are very few in the light of difficulties caused by small divisors and spatial structure, which bring great difficulties to proving the persistence of  full-dimensional tori and reducing regularity for perturbations over $ \mathbb{T}^\infty $. The latter, being as one of the most fundamental questions in KAM theory, has attracted high attention in the study of \textit{how smooth the perturbation has to be to ensure the KAM persistence}. In the $ n $-dimensional case ($ 2 \leqslant n <+\infty $), it is well known that finite differentiability is sufficient, namely $ C^{2n} $ (plus certain Dini modulus of continuity) regularity for the perturbed Hamiltonian functions, and $ C^{n-1} $ regularity for the perturbed vector fields, see  Albrecht \cite{MR2350326} and P\"oschel \cite{PoarXiv} for the KAM theory, and Takens \cite{MR300311}, Herman \cite{MR0874026}, Mather \cite{MR0967638}\footnote{We would also like to mention an analogous argument by Forni \cite{MR1279471} for the  analytic case.}, Cheng-Wang \cite{MR3061774} and Wang \cite{MR4385768,WLARXIV} for counterexamples with lower regularity. It shall be emphasized that the above regularity is actually sharp, and therefore when considering infinite-dimensional situations,  finitely differentiable perturbations would destroy KAM invariant tori. As a result, one demands at least $ C^\infty $ regularity  in the almost periodic setting. Regularity usually available for KAM at this case is analyticity,  and to the best of our knowledge, there do not exist any abstract Gevrey KAM except for very few results concerning with Gevrey dependent potential in specific PDEs via the Diophantine nonresonance introduced by Bourgain in \cite{MR2180074}, namely
 \begin{equation}\label{VECDio'}
 	\left| {k  \cdot \omega } \right| > \gamma^* \prod\limits_{j \in \mathbb{Z}} {\frac{1}{{\left( {1 + {{\left| {{k _j}} \right|}^\mu }{{\left\langle j \right\rangle }^\mu }} \right)}}} ,\;\;\text{$ \forall k  \in {\mathbb{Z}^\mathbb{Z}} $ satisfying $ 0 < \sum\limits_{j \in \mathbb{Z}} {\left| {{k _j}} \right|}  <  + \infty  $}
 \end{equation}
 with $ 0<\gamma^* <1 $ and $ \mu > 1 $, and $ \left\langle j \right\rangle : = \max \left\{ {1,\left| j \right|} \right\} $ for $ j \in \mathbb{Z} $, which we refer to Biasco \textit{et al}. \cite{MR4091501} and Procesi-Stolovitch \cite{MR4456121} for relevant works. It seems that, such Diophantine frequencies are indeed related to the Gevrey regularity for perturbations over $ \mathbb{T}^\infty $ denoted by $ P $, and we mention that for given nonresonance, the minimum regularity of $ P $ should depend on it as well as certain spatial structure due to almost periodicity. Therefore, reducing regularity must base on  explicitly constructed nonresonance.

 To answer the previous addressed fundamental question on minimum KAM regularity via almost periodicity, we investigate linearization of perturbed vector fields over $ \mathbb{T}^\infty $ without Hamiltonian structure for simplicity,  and  present two main sharp results in this paper:
 \begin{itemize}
 \item With the Diophantine nonresonance in \eqref{VECDio'} it is shown that non-analytic Gevrey type regularity is sufficient to obtain KAM conjugacy, and the Gevrey exponent is indeed sharp even for the  finite-dimensional case;
 \item For almost all vector fields, certain $ C^\infty $ regularity beyond Gevery is sufficient to guarantee the persistence of full-dimensional tori, and it is also sharp in view of the fact that $ C^\infty $ regularity cannot be reduced to  finite differentiability.
 \end{itemize}

 Obviously, achieving sharp results for infinite-dimensional KAM  is a challenge. To this end, via a specific spatial structure we  extend P\"oschel's non-Newtonian KAM scheme for quasi-periodicity in \cite{PoarXiv} to the almost periodic case. In contrast to the typical super-exponential convergence, our iteration possesses an arbitrarily slow convergence rate, thereby weakening the regularity assumption to the maximum extent possible. To be more precise, we solve a nonlinear equation at each KAM step by employing the Banach contraction theorem  instead of  linearizing the conjugacy equation under consideration. An essential fact should be stressed is that P\"oschel's KAM only applies to \textit{finite-dimensional Diophantine} vector fields. Hence, we introduce a \textit{balancing sequence} to address small divisors in the infinite-dimensional context, which represents a significant distinction. As you will see, choosing an appropriate balancing sequence allows one to derive sharp regularity (not given in advance) for which KAM applies, and this is completely different from the usual cases where one has to first assume regularity (for instance, analyticity or Gevrey regularity). 
 
 \renewcommand{\thefootnote}{\fnsymbol{footnote}}
  Apart from above, instead of utilizing  the Jackson type approximation theorem in the finite-dimensional case for finitely differentiable KAM, we combine certain KAM coordinate transformations with the analytic smoothing approach to construct a special $ m $-weighted norm that reveals explicit regularity about perturbations over $ \mathbb{T}^\infty $. \textit{Thereby, we provide an effective and  universal approach to reduce regularity in the infinite-dimensional case for the first time.} Furthermore, our results are of real physical interest. For example, as Arnaiz utilized linearization theorem of the perturbed vector fields over $ \mathbb{T}^n $ to study  semiclassical KAM as well as renormalization  theorems based on counterterms (acting as the `modifying term' in our paper) in \cite{MR4105369}, certain semiclassical measures and quantum limits could be well characterized. As a consequence, our infinite-dimensional linearization theorems  would play an important role in further touching such physically  related problems in the almost periodic sense, even considering less regular symbols (see Remark 1 in \cite{MR4105369})\footnote{However, to prevent the content of this paper from becoming excessively lengthy, we prefer not to explore specific applications here, but to focus primarily on the KAM theory itself. As previously mentioned, our results can be utilized to investigate many physically  related problems.}.

The rest of the paper is organized as follows. Towards almost periodicity, we introduce nonresonance, the spatial structure and Fourier analysis as preliminaries in Section \ref{VECPreliminaries}. Then our main results, namely Quantitative Gevrey type KAM Theorem \ref{VECT1} concerning with Diophantine nonresonance, Quantitative logarithmic $ C^\infty $ type KAM Theorem \ref{VECT2} and Qualitative $ C^\infty $ type KAM Theorem \ref{VECT33} via universal nonresonance (both of them are non-Gevrey), are stated in Section \ref{VECSTEATE}, respectively. Our approach is also valid for the quasi-periodic cases and shown to be sharp, as explained by Theorem \ref{VECT3} in Section \ref{VECAPP}. Section \ref{VECPROOF} is devoted to the proofs of all main results. There, we shall  provide a guideline for readers: some basic lemmas that are crucial in the KAM theory are constructed firstly, aiming to establish an Abstract $ m $-weighted KAM Theorem \ref{VECABSTRACT} for $ C^1$ linearization; then  Theorems \ref{VECT1}, \ref{VECT2}, \ref{VECT33} and \ref{VECT3} are direct  corollaries of Theorem \ref{VECABSTRACT}, by successfully and appropriately selecting balancing sequences as already mentioned. Finally, we give further discussions in Section \ref{VECSEC6}, involving some interesting connections with Corsi-Gentile-Procesi's KAM \cite{MR4781767}, a more quantitative version of Theorem \ref{VECT33} with two  alternative approaches, as well as a  minimization idea.

\section{Preliminaries}\label{VECPreliminaries}

Let us recall the approximation function, the  infinite-dimensional Diophantine nonresonance, the weighted norm for vectors, the $ \mathbb{T}^\infty_\sigma $ torus and the  analyticity on it to be studied in this paper.   Denote by $ \left|  \cdot  \right| $ the sup-norm on the infinite-dimensional vector space $ \mathbb{R}^{\mathbb{Z}} $ (or the  finite-dimensional vector space $ \mathbb{R}^{n} $ with $ n \in \mathbb{N}^+ $). \textit{Note that the function setting in this section is set up only  to state the quantitative Gevrey type result.} In fact, we will employ a more general spatial structure to deal with $ C^\infty $ regularity.

\begin{definition}[Approximation function]\label{VECApproximationfundef}
	A function $ \Delta :\left[ {1, + \infty } \right) \to \left[ {1, + \infty } \right) $ is said to be an approximation function, if it is continuous, strictly monotonically  increasing, and satisfies  $ \Delta(+\infty) =+\infty$.
\end{definition}
Such approximation functions will be used to characterize certain universal nonresonance in the  infinite-dimensional case (see Section \ref{VECPT2}) beyond the Diophantine type below, as well as a weight that embodies the spatial structure (see Theorems \ref{VECT2} and \ref{VECT33}).

\begin{definition}[Diophantine nonresonance]\label{VECINDIO}
	Given $ \gamma^* \in (0, 1) $ and $ \mu > 1 $,   the Diophantine nonresonance of frequency $ \omega  \in {\left[ {1,2} \right]^\mathbb{Z}} $ means
	\begin{equation}\label{VECDio}
		\left| {k  \cdot \omega } \right| > \gamma^* \prod\limits_{j \in \mathbb{Z}} {\frac{1}{{\left( {1 + {{\left| {{k _j}} \right|}^\mu }{{\left\langle j \right\rangle }^\mu }} \right)}}} ,\;\;\forall k  \in {\mathbb{Z}^\mathbb{Z}},\;\;0 < \sum\limits_{j \in \mathbb{Z}} {\left| {{k _j}} \right|}  <  + \infty ,
	\end{equation}
	provided  $ \left\langle j \right\rangle : = \max \left\{ {1,\left| j \right|} \right\} $ for $ j \in \mathbb{Z} $.
\end{definition}

Let us denote by $ {\rm{D}_{\gamma^* ,\mu }} $ the set containing Diophantine  frequencies in the infinite-dimensional case. Fortunately, it was proved by Bourgain \cite{MR2180074} that  there exists a constant $ C(\mu)>0 $ such that $ \mathbb{P}\left( {{{\left[ {1,2} \right]}^\mathbb{Z}}\backslash {\rm{D}_{\gamma^* ,\mu }}} \right) \leqslant C\left( \mu  \right)\gamma^*  $, that is, almost all frequencies are of the Diophantine type in a measure-theoretical sense. %We always denote by $ \mathbb{T}^\infty $ the range in which such nonresonant (could be not of Diophantine) frequencies lies.
   See more details from \cite{MR2180074,MR4456121,MR4201442,MR4091501}, for instance. 

Next, we shall introduce the infinite-dimensional torus and the Fourier expansion on it. For $ \eta,\sigma>0 $,   the thickened infinite-dimensional torus is define by
\[\mathbb{T}_\sigma ^\infty : = \left\{ {x = {{({x_j})}_{j \in \mathbb{Z}}},{x_j} \in \mathbb{C}:\operatorname{Re} {x_j} \in \mathbb{T},\left| {\operatorname{Im} {x_j}} \right| \leqslant \sigma {{\left\langle j \right\rangle }^\eta }}, j \in \mathbb{Z} \right\}.\]
In particular, it represents the usual torus $ \mathbb{T}^\infty $ when $ \sigma  $ degenerates to $ 0 $. 
For given $ \eta >0 $, we define the set of infinite integer vectors with finite support 
\[\mathbb{Z}_ * ^\infty : = \left\{ {k  \in {\mathbb{Z}^\mathbb{Z}}:{{\left| k  \right|}_\eta }: = \sum\limits_{j \in \mathbb{Z}} {{{\left\langle j \right\rangle }^\eta }\left| {{k _j}} \right|}  <  + \infty } \right\}.\]
Such a spatial structure like this is not necessary in the finite-dimensional case, but cannot be removed  in the  infinite-dimensional setting; otherwise, the Fourier series would diverge. It is worth mentioning that a similar spatial structure  appears in Theorems \ref{VECT2} and \ref{VECT33}. Via the above notations, the analyticity on $ \mathbb{T}^\infty_\sigma $ with the exponential weighted norm could be given.
\begin{definition}\label{VECcijiexi}
	For  $ \sigma > 0 $,  the Banach space $ \mathcal{G}\left( {\mathbb{T}_\sigma ^\infty } \right) $ containing analytic functions or maps on $ \mathbb{T}_\sigma ^\infty $ is defined by
	\[\mathcal{G}\left( {\mathbb{T}_\sigma ^\infty } \right): = \left\{ {u\left(x  \right) = \sum\limits_{k  \in \mathbb{Z}_ * ^\infty } {\hat u\left( k  \right){{\rm e}^{{\rm i}k  \cdot x }}} :\|u\|_{\sigma }: = \sum\limits_{k  \in \mathbb{Z}_ * ^\infty } {\left| {\hat u\left( k  \right)} \right|{{\rm e}^{\sigma {{\left| k  \right|}_\eta }}}}  <  + \infty } \right\}.\]
	
\end{definition}
\begin{remark}\label{sanjiao}
It follows from the definition of the norm $ {\left\|  \cdot  \right\|_\sigma } $ that $ {\left\| {{{\rm e}^{{\rm i}k \cdot x}}} \right\|_\sigma } = {{\rm e}^{\sigma{{\left| k \right|}_\eta }}} $ for $ k \in \mathbb{Z}_ * ^\infty  $, and $   {\left\| {X } \right\|_\sigma }={\left\| {X } \right\|_0}=\left|{X }\right| $ holds for  $X $ being constant.
\end{remark}

Given the analyticity on $ \mathbb{T}^\infty_\sigma $ as defined in Definition \ref{VECcijiexi}, we have the following proposition, which has been discussed by Montalto-Procesi \cite{MR4201442}, and also in a more general fashion by Corsi \textit{et al}. \cite{MR4781767}.
\begin{proposition}
For $ u \in \mathcal{G}\left( {\mathbb{T}_\sigma ^\infty } \right)  $ with $ \sigma>0 $, we have
\[\int_{{\mathbb{T}^\infty }} u (x){\rm d}x: = \mathop {\lim }\limits_{N \to  + \infty } \frac{1}{{{{(2\pi )}^{2N + 1}}}}\int_{{\mathbb{T}^{2N + 1}}} u (x){\rm d}{x_{ - N}} \ldots {\rm d}{x_0} \ldots {\rm d}{x_N} = \hat u(0).\]
Moreover, for any $ 0 \ne k \in \mathbb{Z}_ * ^\infty  $, we have
\[\hat u(k) = \int_{{\mathbb{T}^\infty }} u (x){e^{ - {\rm{i}}k \cdot x}}{\rm d}x = \mathop {\lim }\limits_{N \to +\infty } \frac{1}{{{{(2\pi )}^{2N + 1}}}}\int_{{\mathbb{T}^{2N + 1}}} u (x){e^{ - {\rm{i}}k \cdot x}}\ldots {\rm d}{x_0} \ldots {\rm d}{x_{ - N}} \ldots {\rm d}{x_N}.\]
\end{proposition}

Finally, let us introduce a special $ m $-weighted norm that will be used to construct the regularity of perturbations as weak as possible in KAM. We say that $m = {\left\{ {{m_k}} \right\}_{k \in {\mathbb{Z}_*^\infty}}} $ is  a weight, if  $ {m_k} = m( {\left| k \right|_\eta} ) \geqslant 0 $ is non-decreasing for every $ k \in \mathbb{Z}_*^\infty $. We always denote by $ m $ a weight rather than a non-negative number (such as $ \sigma $ in Definition \ref{VECcijiexi}) throughout this paper. Based on a weight $ m $, one could consider a certain $ m $-weighted norm from our non-Newtonian KAM that has weaker regularity than analyticity (such as Gevrey regularity or even $ C^\infty $ regularity).
\renewcommand{\thefootnote}{\fnsymbol{footnote}} 
\begin{definition}
Consider a map with Fourier expansion series $ f = \sum\nolimits_{k \in \mathbb{Z}_ * ^\infty } {\hat f(k){{\rm e}^{{\mathrm{i}}k \cdot x}}}  $.  Then for a weight $ m = {\left\{ {{m_k}} \right\}_{k \in \mathbb{Z}_ * ^\infty }} $, define the $ m $-weighted norm of $ f $ as\footnote{We reaffirm that the norm of the Fourier coefficients here (i.e., $ | {\hat f(k)} | $) is defined as the sup-norm of an infinite-dimensional vector, as specified at the beginning of Section \ref{VECPreliminaries}. However, it is noteworthy that the sup-norm employed here can be substituted with any norm applicable to finite-dimensional vectors, because for any given $ k \in \mathbb{Z}_*^\infty $, it possesses compact support.}
\[{\left\| f \right\|_m}: = \sum\limits_{k \in \mathbb{Z}_ * ^\infty } {| {\hat f(k)} |{m_k}} .\]
\end{definition}
\begin{remark}
For instance, $ {m_k} = \left| k \right|_\eta ^a  $ corresponds to the Gevery regularity for $ 0<a<1 $ to analyticity for $ a=1 $, respectively. We also refer to the finite-dimensional Gevrey regularity proposed by Popov \cite{MR2104602}, which is equivalent in many situations.
\end{remark}

\section{Main results: The almost periodic case}\label{VECSTEATE}
As usual, let us first discuss the Diophantine vector field in the almost periodic case. It is shown that, assuming the Gevrey type regularity depending on the Diophantine nonresonance for perturbations is sufficient to ensure the KAM linearization via modifying terms. Compared with the well known  fact in the finite-dimensional case, that is, the critical regularity in KAM theory strongly depends on the Diophantine exponent admitted by the frequency (if it indeed has, for instance, see Salamon \cite{MR2111297} and the authors \cite{TLCCM}), our infinite-dimensional result also reveals the similar  point.  Moreover, the  Gevrey exponent obtained below is indeed \textit{sharp}, even in the quasi-periodic case, see Section \ref{VECAPP} for explanations. 

\renewcommand{\thefootnote}{\fnsymbol{footnote}} 
\begin{theorem}[\textbf{Quantitative Gevrey type  KAM}]\label{VECT1}	
Let $ \eta>0 $ be given. Assume that $ \omega \in \mathbb{T}^\infty $ satisfies the infinite-dimensional Diophantine condition in Definition \ref{VECINDIO}. Then there exists some $ C_{{\rm{G1}}} >0$, as long as the perturbation $ P $ is sufficiently small in the sense that\footnote{The Gevrey exponent in the first version of this work \cite{arXiv:2306.08211} was weaker than it is now, namely   $ \frac{1}{1+\eta'} $ with arbitrary $0< \eta'<\eta $, but without the present logarithmic term. We have further optimized it by quantitatively constructing a balancing sequence.}
\[{\left\| P \right\|_m} = \sum\limits_{0 \ne k \in \mathbb{Z}_*^\infty } {|\hat P(k)|\exp \left( {{C_{{\rm{G1}}}}\ln \left( {1 + {{\left| k \right|}_\eta }} \right)\left| k \right|_\eta ^{\frac{1}{{1 + \eta }}}} \right)}  \ll 1,\]
there exist a modifying term $ \tilde \omega $ and a nearly identical diffeomorphism $ \Psi $ such that $ \omega -\tilde \omega + {P } $ is conjugated to $ \omega $, i.e.,
\[	\Psi^ * \left( {\omega -\tilde \omega + {P } } \right) = \omega .\]
\end{theorem}

Unlike the discrete case (in map form), for the continuous vector field above, $ \Psi^ * \left( {\omega -\tilde \omega + {P } } \right) = \omega $ implies 
$ {\left( {D\Psi } \right)^{ - 1}}\left( {\omega  - \tilde \omega  + P} \right) \circ \Psi  = \omega  $, which is commonly referred to as the linearization of the vector field in dynamical systems, see Bounemoura \cite{MR4412075} and  Koch \cite{MR4735770},	for instance. The concept of the modifying term $ \tilde{\omega} $ was initially introduced by Arnold \cite{MR0140699} within  finite-dimensional systems  and later generalized by Moser \cite{MR0208078}. Notably, it is indeed close to $ 0 $, allowing the prescribed frequency  to drift. Specifically, if the rotation set of the perturbed vector field includes  $ \omega $, then the modifying term $ \tilde{\omega} $ vanishes, which means that we achieve frequency-preserving; however, this is not generally the case. The former situation  corresponds to the discrete linearization case.  As for the nearly identical diffeomorphism $ \Psi $, it is indeed unique upon normalization in the sense that the average of $ \Psi  -{\rm id} $ is zero. Both the modifying term $ \tilde{\omega} $ and the nearly identical diffeomorphism $ \Psi $ are nonlinear with respect to the perturbation $ P $. For further details, see Bounemoura \cite{MR4412075}, for instance.

\renewcommand{\thefootnote}{\fnsymbol{footnote}} 
Now we consider the nonresonance beyond Diophantine, for which almost all vector fields hold, and we say that it is \textit{universal}. One will see later that the \textit{non-Gevrey} regularity for perturbations is explicitly shown, but we shall emphasize, it could be weaker, whenever the weight added in \eqref{VECT2RE} is larger than any polynomial's form, see details from Theorem \ref{VECT33}. This fact reveals the \textit{sharpness}, since in the almost periodic case, the polynomial's nonresonance or regularity \textit{does not} ensure the KAM persistence, which we refer to the  counterexamples constructed by Takens \cite{MR300311}, Herman \cite{MR0874026}, Mather \cite{MR0967638},  Cheng-Wang \cite{MR3061774}, Wang \cite{MR4385768,WLARXIV} and the references therein (as the dimension tends to infinity)\footnote{Please note that this paper does not claim to conduct work in the field of converse KAM theory; it simply references historical results.}. \textit{Thereby, Theorem \ref{VECT33} provides the weakest regularity---namely $ C^\infty $ for which  KAM  applies in the universal sense over $ \mathbb{T}^\infty $. Consequently, it fills the gap in the results concerning this aspect.}

\begin{theorem}[\textbf{Quantitative logarithmic $ C^\infty $ type KAM}]\label{VECT2}
For almost all $ \omega \in \mathbb{R}^{\mathbb{Z}} $, there exists a uniform approximation function $ w $,   as long as  the perturbation $ P $ is sufficiently small in the sense that
\begin{equation}\label{VECT2RE}
	\sum\limits_{0 \ne k \in \mathbb{Z}_ * ^\infty } {|\hat P(k)|\exp \left(C_{\rm L1} {{{\left( {\ln {{\left\| k \right\|}_w}} \right)}^a}} \right)}  \ll 1,
\end{equation}
where $ {\left\| k \right\|_w}: = \sum\nolimits_{j \in \mathbb{Z}} {w\left( {\left\langle j \right\rangle } \right)\left| {{k_j}} \right|}  $ and $ C_{\rm L1}>0,a>1 $,  the KAM conjugacy in Theorem \ref{VECT1} holds.
\end{theorem}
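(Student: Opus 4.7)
The plan is to deduce Theorem \ref{VECT2} as a corollary of the Abstract $m$-weighted KAM Theorem \ref{VECABSTRACT} by choosing the weight
\[
m_k = \exp\!\bigl(\kappa (\ln \|k\|_w)^{a}\bigr), \qquad k \in \mathbb{Z}_*^\infty,
\]
with $w$ a suitably slow approximation function, and by selecting a balancing sequence that matches this weight. The first step is a measure-theoretic construction of the universal nonresonance. Unlike Definition \ref{VECINDIO}, here we cannot afford any fixed polynomial small-divisor exponent: the exclusion set must be controlled by something slightly stronger than polynomial but still compatible with full measure. The natural template is
\[
|k\cdot \omega| > \gamma \prod_{j\in\mathbb{Z}} \frac{1}{1 + |k_j|^{\mu} w(\langle j\rangle)^{\mu}}
\]
for a common approximation function $w$ growing faster than any power of $\langle j\rangle$; a standard Borel--Cantelli / Fubini argument over the product measure on $[1,2]^{\mathbb{Z}}$ gives such a $w$ for a.e.\ $\omega$, and the rate at which $w$ is allowed to grow is what will dictate the admissible weight $m$.

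Next I would verify that $m_k$ is a legitimate weight in the sense of Section \ref{VECPreliminaries}, i.e.\ monotone in $|k|_\eta$, and establish the key arithmetic lemma controlling the small-divisor contribution:
\[
\prod_{j\in\mathbb{Z}}\bigl(1+|k_j|^\mu w(\langle j\rangle)^\mu\bigr)
\;\leqslant\; \exp\!\Bigl(C\bigl(\ln \|k\|_w\bigr)^{a}\Bigr).
\]
This is precisely the step that demands $a>1$: taking $\ln$ turns the product into a sum $\sum_j \ln(1+|k_j|^\mu w(\langle j\rangle)^\mu)$, and one must trade the $\mu$ factor and the potentially infinite support against $\|k\|_w$ via concavity of $\ln$; the slackness $a>1$ absorbs the cross terms. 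Once this bound is in hand, the nonresonance reads $|k\cdot \omega|^{-1} \leqslant \gamma^{-1} m_k$ up to adjustable constants, which is precisely the input that the abstract theorem needs to solve the homological equation in the $m$-weighted norm.

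The remaining work is the choice of the balancing sequence feeding Theorem \ref{VECABSTRACT}. Following P\"oschel's non-Newtonian philosophy, at step $n$ one solves the conjugacy equation by Banach contraction on a shell of analyticity radius $\sigma_n$, with the loss of radius $\sigma_n - \sigma_{n+1}$ and the truncation level $K_n$ tuned so that (i) the small-divisor factor $m_{K_n}$ is swallowed by the analytic smoothing gain $\exp(-(\sigma_n-\sigma_{n+1})|k|_\eta)$ after truncation above $K_n$, and (ii) the tail beyond $K_n$ is controlled by the $m$-weighted norm of $P$. Since $m_k$ grows only like $\exp(\kappa(\ln\|k\|_w)^a)$, the required loss $\sigma_n - \sigma_{n+1}$ shrinks extremely slowly; one can therefore afford arbitrarily slow (sub-geometric) convergence, which is precisely what the non-Newtonian iteration permits and what a classical Newtonian scheme would forbid.

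The main obstacle will be the second step, namely producing an explicit balancing sequence $\{\sigma_n, K_n, \varepsilon_n\}$ whose interplay with the non-polynomial weight $m$ closes the induction. In standard analytic KAM the radii shrink geometrically and $K_n$ grows logarithmically in $\varepsilon_n$; here, because the weight is sub-Gevrey, the natural choice is $K_n$ defined implicitly through the inverse of the function $r\mapsto \exp(\kappa(\ln w^{-1}(r))^a)$, and the error $\varepsilon_{n+1}$ must be shown to decrease faster than $m_{K_n}^{-1}$ times the Banach--contraction constant. Verifying this quantitative compatibility -- especially proving that the modifying term $\tilde\omega$ stays bounded and that $\Psi$ converges to a near-identity map on $\mathbb{T}^\infty$ -- is where the technical heart of the argument lies; everything else is bookkeeping within the framework already set up for Theorem \ref{VECT1}.
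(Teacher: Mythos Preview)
Your high-level plan---deduce the result from the Abstract $m$-weighted KAM Theorem \ref{VECABSTRACT} with weight $m_k=\exp(\kappa(\ln\|k\|_w)^a)$---is correct and is exactly what the paper does. But two of your concrete choices diverge from the paper, and one of them is the gap you yourself flag.

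\textbf{Nonresonance.} Rather than a Bourgain-type product condition with $w(\langle j\rangle)$ in place of $\langle j\rangle$, followed by your proposed arithmetic lemma $\prod_j(1+|k_j|^\mu w(\langle j\rangle)^\mu)\leqslant\exp(C(\ln\|k\|_w)^a)$, the paper posits the small-divisor condition \emph{directly} as $|k\cdot\omega|>\gamma^*/\Delta(\|k\|_w)$ with $\Delta(x)\sim\exp((\ln x)^a)$. The full-measure statement then reduces to a cardinality bound $\#\{k:\|k\|_w\in[\nu-1,\nu)\}\leqslant 2^{2[w^{-1}(\nu)]+1}C_{\nu+[w^{-1}(\nu)]}^{\nu}$, which is beaten by $\Delta(\nu)$ provided $w$ grows \emph{fast} enough (not ``suitably slow'' as you wrote: fast $w$ forces small support of $k$ and hence small $w^{-1}(\nu)$). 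Your product route can be made to work, but your arithmetic lemma also needs $w^{-1}(x)\lesssim(\ln x)^{a-1}$, i.e.\ $w(y)\gtrsim\exp(cy^{1/(a-1)})$, so you end up with the same requirement and no simplification.

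\textbf{Balancing sequence.} This is the real content, and the paper's choice is explicit: $d_j=Aj^{a-1}$ with $A>a\gamma^{-1}(\ln b)^a$. Then $\gamma\sum_{j=0}^{\nu-1}d_j\sim A\gamma a^{-1}\nu^a$ exactly dominates the small-divisor loss $\Delta(b^\nu)\sim\exp((\nu\ln b)^a)$ in condition \eqref{VECwudi}, while $d_{\nu+1}/d_\nu\to1<b$ handles the ratio constraint. Condition (II) then closes because both $\rho(\mu)e^{2^{-1}(1+\lambda)bd_{\mu-1}}$ and $m(b^{\mu-1})$ are $\exp(O(\mu^a))$. Your implicit description via inverting $r\mapsto\exp(\kappa(\ln w^{-1}(r))^a)$ conflates the roles of $w$ (spatial weight) and $\Delta$ (divisor function) and does not produce a usable $d_j$; once you recognise that the small divisor at truncation level $K_\nu=b^\nu$ contributes $\exp((\nu\ln b)^a)$, the polynomial choice $d_j\sim j^{a-1}$ (so partial sums grow like $\nu^a$) is essentially forced.
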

\begin{remark}\label{VECDUISHUGZ}
Theorem \ref{VECT2RE} is based on the following  universal nonresonence with any $ \tilde{C}_{\rm L1}>0 $ sufficiently small:
\[\left| {k \cdot \omega } \right| > {\gamma ^*}\exp \left( { - \tilde{C}_{\rm L1} {{\left( {\ln {{\left\| k \right\|}_w}} \right)}^a}} \right),\;\;{\gamma ^*} > 0,\;\;0 \ne k \in \mathbb{Z}_*^\infty ,\]
and, consequently, it is indeed a quantitative result.
\end{remark}

Building on certain weaker nonresonance than that outlined  in Remark \ref{VECDUISHUGZ}, we establish the following main result:

\renewcommand{\thefootnote}{\fnsymbol{footnote}}
\begin{theorem}[\textbf{Qualitative $ C^\infty $ type  KAM}]\label{VECT33}
	For almost all $ \omega \in \mathbb{R}^{\mathbb{Z}} $ and any given approximation function $ \Delta $ larger than any polynomial's type\footnote{That is, $ \Delta(x)\gg x^L $ for arbitrary $ L>0 $, and we say that $ \Delta $ is `super-polynomial'.}, there exists a uniform approximation function $ u $ depending on $ \Delta $,   as long as  the perturbation $ P $ is sufficiently small in the sense that
\[\sum\limits_{0 \ne k \in \mathbb{Z}_ * ^\infty } {|\hat P(k)|\Delta({\left\| k \right\|}_{u})}  \ll 1,\]
	where $ {\left\| k \right\|_u}: = \sum\nolimits_{j \in \mathbb{Z}} {u\left( {\left\langle j \right\rangle } \right)\left| {{k_j}} \right|}  $,  the KAM conjugacy in Theorem \ref{VECT1} holds.
\end{theorem}
\begin{remark}
Theorem \ref{VECT33} is inspired by Moser's renowned work \cite{MR0147741} on $ C^{333} $ regularity in KAM theory, and thus we are inclined to refer to it as a Moser-type result. It delivers one of the most fundamental conclusions in KAM theory: $ C^\infty $ regularity can ensure the validity of the infinite-dimensional KAM theory. 
\end{remark}
\begin{remark}
In addition to the proof given in Section \ref{VECPT2},    Section \ref{VECSEC62} will provide two alternative approaches to Theorem \ref{VECT33}, particularly in a quantitative way.
\end{remark}

It should be noted that Theorems \ref{VECT1} to \ref{VECT33} are all corollaries of a more abstract but more refined Theorem \ref{VECABSTRACT} presented in Section \ref{VECABST}, which provides $C^1$ linearization. In the context of specific regularity applications, the appropriate selection of the balancing sequence may lead to a conjugacy regularity higher than $C^1$. This implies that the tori obtained in Theorems \ref{VECT1} to \ref{VECT33} may possess higher regularity (e.g., $C^\infty$ or even higher).  However, for the sake of brevity, we prefer not to delve into this point in this paper.

\section{Applications to the quasi-periodic case}\label{VECAPP}
It should be noted that our result, specifically the Abstract $m$-weighted KAM Theorem \ref{VECABSTRACT} detailed in Section \ref{VECABST}, is also applicable to the quasi-periodic case with sharp regularity outlined below. The proof of this application will be discussed in Section \ref{VECYOUXIAN}.

\begin{theorem}\label{VECT3}
Let $ n \in \mathbb{N}^+ $ and an approximation function $ \Delta  $ be given.	Assume that $ \omega \in \mathbb{R}^n $ satisfies the finite-dimensional nonresonant condition
\[\left| {k \cdot \omega } \right| > \frac{{{\gamma ^ * }}}{{\Delta \left( {\left| k \right|} \right)}},\;\;0 \ne k \in {\mathbb{Z}^n},\;\;{\gamma ^ * } > 0.\]
Suppose that any of the followings is satisfied: 
\begin{itemize}
\item[(i)] $ \Delta $ is Diophantine, i.e., 
	\begin{equation}\label{VECFINITEDIO}
	\Delta(x)\sim x^\beta,\;\;\beta  \geqslant \left\{ \begin{aligned}
		&1,&n = 1,  \hfill \\
		&n-1,&n \geqslant 2, \hfill \\
	\end{aligned}  \right.
\end{equation}
and the perturbation $ P $ admits the finite differentiability regularity  as
\[\sum\limits_{0 \ne k \in {\mathbb{Z}^n}} {|\hat P(k)|{{\left| k \right|}^{\beta  + 1}}}  \ll 1 ;\]
\item[(ii)] $ \Delta $ is  sub-exponential, i.e., $ \Delta \left( x \right) \sim \exp ( {{x^\zeta}} ) $ for some $ 0<\zeta<1 $,  and the perturbation $ P $ admits the Gevrey regularity  as
\[ \sum\limits_{0 \ne k \in {\mathbb{Z}^n}} {|\hat P(k)|\exp ( C_{\rm G2}{{|k|^{\zeta }}} )}  \ll 1 \]
 with some $ C_{\rm G2} >0 $ sufficiently large;
\item[(iii)] $ \Delta \left( x \right) \sim \exp \left( {{{\left( {\ln x} \right)}^a}} \right) $ with some $ a>1 $,  and the perturbation $ P $ admits the same type regularity as
\[ \sum\limits_{0 \ne k \in {\mathbb{Z}^n}} {|\hat P(k)|\exp \left( {C_{\rm L2}{{\left( {\ln |k|} \right)}^a}} \right)}  \ll 1  ;\]
 with any $ C_{\rm L2}>1 $.
\end{itemize}
	Then there exist a modifying term $ \tilde \omega $ and a nearly identical diffeomorphism $ \Psi $ such that $ \omega -\tilde \omega + {P } $ is conjugated to $ \omega $, i.e.,
	\[	\Psi^ * \left( {\omega -\tilde \omega + {P } } \right) = \omega .\]
\end{theorem}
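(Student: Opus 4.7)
The plan is to derive Theorem \ref{VECT3} as a direct corollary of the Abstract $m$-weighted KAM Theorem \ref{VECABSTRACT}, treating the quasi periodic setting over $\mathbb{T}^n$ as a specialization of the infinite dimensional framework in which $\mathbb{Z}_*^\infty$ collapses to $\mathbb{Z}^n$ and the spatial weight $\langle j\rangle^\eta$ may be taken trivial (there is no small divisor contribution from spatial structure when $n<\infty$). The heart of the argument is then the selection of an appropriate \textbf{balancing sequence} $\{\sigma_j,m^{(j)},K_j\}_{j\geqslant 0}$ of analyticity radii, weights and Fourier cutoffs such that: (a) at each KAM step the conjugacy equation is solved via the Banach contraction principle rather than by Newton linearization; (b) the small divisor cost $\Delta(K_j)/\gamma^*$ arising from inverting $k\cdot\omega$ on $\{|k|\leqslant K_j\}$ is absorbed by the step-wise loss $m^{(j)}\mapsto m^{(j+1)}$; and (c) the $m$-weighted norm of $P$ assumed in each case of the statement controls the telescoping loss over the infinitely many steps.

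For case (i), with Diophantine $\Delta(|k|)\sim|k|^\beta$, I would use geometric cutoffs $K_j\sim 2^j$ and polynomial weights $m^{(j)}_k\sim|k|^{a_j}$ with $a_j\searrow 0$, so that the telescoped budget is dominated by the hypothesis $\sum|\hat P(k)||k|^{\beta+1}<+\infty$; the exponent $\beta+1$ is precisely what the small divisor plus the nonlinear remainder in the contraction together consume. For case (ii), subexponential $\Delta(x)\sim\exp(x^\zeta)$, I would take Gevrey weights $m^{(j)}_k\sim\exp(\lambda_j|k|^{\zeta'})$ with $\zeta'>\zeta$ fixed and $\lambda_j\searrow 0$; the gap $\zeta'-\zeta>0$ produces the slack $\exp(|k|^{\zeta'}-|k|^\zeta)$ that can be redistributed over the KAM steps through a suitably chosen $\{\lambda_j\}$. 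For case (iii), the most delicate, the ambient regularity is $\exp(\kappa(\ln|k|)^a)$ with $\kappa>1$ while the small divisor loss per step is of the form $\exp((\ln K_j)^a)$; I would take $m^{(j)}_k=\exp(\kappa_j(\ln|k|)^a)$ with $\kappa_j\searrow 1$ and exploit the fact that $\kappa-1>0$ is exactly the budget to be distributed across infinitely many iterations.

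The main obstacle, and the reason these three regimes are treated separately rather than combined into one statement, is point (b): one must show that the Banach contraction at each step survives after the small divisor induced depletion of the weight, with multiplicative constants whose product is still summable in $j$ despite the arbitrarily slow (non-Newtonian) overall convergence of the scheme. Any overly aggressive choice of the balancing sequence breaks contractivity at some finite step, while an overly conservative choice forces strictly more regularity on $P$ than the theorem claims and destroys the sharpness emphasized in the introduction. Once the sequence is fixed and the small-divisor/weight trade-off is verified case by case, the remaining ingredients---convergence of the composition of KAM coordinate transformations, well-definedness of the modifying term $\tilde\omega$, and near-identity of $\Psi$---follow immediately from the conclusions of Theorem \ref{VECABSTRACT} applied to the finite dimensional data.
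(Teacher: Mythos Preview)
Your high-level strategy---reduce to the Abstract $m$-weighted KAM Theorem \ref{VECABSTRACT} by specializing $\mathbb{Z}_*^\infty$ to $\mathbb{Z}^n$ and dropping the spatial weight---is exactly what the paper does. However, you have misread the structure of Theorem \ref{VECABSTRACT}, and your concrete parameter choices do not fit its hypotheses.

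In Theorem \ref{VECABSTRACT} the weight $m$ is a \emph{single fixed} function of $|k|$, determined a posteriori by condition (II); it does not vary with the KAM step. The free input is the scalar \emph{balancing sequence} $\{d_\nu\}_{\nu\geqslant 0}$, which modulates the analyticity radii via $\sigma_\nu = b^{-\nu}(r+d_\nu)$ and appears in the summability condition \eqref{VECwudi}. Your proposal instead introduces step-dependent weights $m^{(j)}$ (with exponents $a_j\searrow 0$, $\lambda_j\searrow 0$, $\kappa_j\searrow 1$), which is a different scheme that the abstract theorem does not accommodate; in particular, the Iterative Lemma~\ref{VECIterative} tracks a single $m$-norm of $P$ throughout, and your decaying exponents would force the limiting weight to be trivial, losing all control on $P$.

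The paper's actual choices are as follows. For (i) take $d_j\equiv 0$: then $\exp(-\gamma\sum d_j)=1$, the sum in \eqref{VECwudi} becomes $\sum_\nu q^\nu b^{(\beta+1)\nu}<\infty$ for $q<b^{-\beta-1}$, and condition (II) yields $\rho(\mu)=\mathcal{O}(b^{(\beta+1)\mu})$, whence $m_k=|k|^{\beta+1}$. For (ii) take $d_j=\theta^j$ with $b^{\zeta}<\theta<b$, mimicking the proof of Theorem \ref{VECT1}: the geometric growth of $\sum_{j<\nu}d_j$ beats the small-divisor cost $\exp(b^{\nu\zeta})$, and (II) delivers $m_k\sim\exp(|k|^{\zeta'})$ for any $\zeta'>\zeta$. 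For (iii) take $d_j=Aj^{a-1}$ with $A$ large, mimicking the proof of Theorem \ref{VECT2}: then $\sum_{j<\nu}d_j\sim \nu^a$ dominates $(\ln b)^a\nu^a$, and (II) gives $m_k\sim\exp(\kappa(\ln|k|)^a)$ for any $\kappa>1$. In each case the weight is fixed and the ``budget'' you describe is managed entirely through $\{d_\nu\}$, not through a shrinking sequence of weights.
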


Note that (ii) and (iii) are the corresponding versions of Theorems \ref{VECT1} and \ref{VECT2} in the quasi-periodic case. When $ n \geqslant 2 $ in (i), although our regularity is higher than P\"oschel's in his preprint \cite{PoarXiv} (our method is somewhat different from his, under which $ n \geqslant 2 $ is necessary---due to R\"{u}ssmann's estimates in \cite{MR426054}), it is still nearly  sharp, thanks to the counterexamples of Herman \cite{MR0874026} and etc. To be more precise, $ \beta =n-1 $ with $ n \geqslant2 $ in the requirement $ \sum\nolimits_{0 \ne k \in {\mathbb{Z}^n}} {|\hat P(k)|{{\left| k \right|}^{\beta  + 1}}}  \ll 1 $ only yields the $ C^n $ regularity for the perturbation $ P $ (it cannot ensure the existence of the $ (n+1) $-th derivatives), while $ C^{n-1} $ regularity is the `critical' case (the $ (n-1) $-th derivatives may need some Dini type modulus of continuity as observed in \cite{MR2350326,TLCCM}, and $ C^{n-1} $ cannot be weakened to $ C^{n-1-\epsilon} $ for any $ \epsilon>0 $). Our results also cover the case $ n=1 $. So far we have discussed the Diophantine nonresonance in all dimensions, that is, from $ 1 $ to $ +\infty $. Besides considering the Diophantine nonresonance,  both of (ii) and (iii) are \textit{sharp} and concise, see Bounemoura \cite{MR4412075} for the optimal Gevrey instance about vector fields (our sharpness is reflected in the Gevrey exponent $ \zeta $ of the perturbation, namely based on the given nonresonance,  $ \zeta $ in the regularity requirement $ \sum\nolimits_{0 \ne k \in {\mathbb{Z}^n}} {|\hat P(k)|\exp ( C_{\rm G2}{{|k|^{\zeta }}} )}  \ll 1  $ cannot be replaced with any $ \zeta'<\zeta $), and see  Salamon \cite{MR2111297}, Koudjinan \cite{MR4104457} and the authors \cite{TLCCM} for sharp cases considering  Hamiltonian systems via universal nonresonance. Here, we would also like  to mention the works of Khesin \textit{et al.} \cite{MR3269186}, Li-Shang \cite{MR3960504} and Hu \cite{MR4623276} via finite differentiability.  However, we do not yet know what KAM analogues are based on extremely weak nonresonant conditions (for instance, \cite{MR4836959}), and these will be the subject of our future research.

\section{Proofs of the results}\label{VECPROOF}
All proofs of the results are presented in this section. However, before delving into the proofs, it is essential to establish in detail some fundamental lemmas based on the infinite-dimensional structure introduced earlier. Throughout this paper, we consistently use the notation $ {\rm id} $ to represent the identity operator, and we denote by $ \mathbb{I} $ the infinite-dimensional identity matrix, such that $ D\left( {\rm id} \right) = \mathbb{I} $. Additionally, the symbols $ \vee $ and $  \wedge  $ represent the maximum and minimum operators, respectively.

\subsection{Some basic lemmas}\label{VECLEMMAS}
\begin{lemma}[Neumann Lemma]\label{VECL1}
Let $ \varphi    $ be a self map the torus $ \mathbb{T}^\infty $. If $ \mu  = {\left\| {D\varphi  - \mathbb{I}} \right\|_\sigma } < 1 $ with $ \sigma>0 $, then $ \varphi $ is a diffeomorphism, and its inverse $ \psi  $ satisfies
\[{\left\| {D\psi } \right\|_\sigma } \leqslant \frac{1}{{1 - \mu }},\;\;{\left\| {D\psi  - \mathbb{I}} \right\|_\sigma } \leqslant \frac{\mu }{{1 - \mu }}.\]
\end{lemma}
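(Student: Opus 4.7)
The plan is to split the proof into two parts: first, construct the inverse $ \psi $ as a self-map of $ \mathbb{T}^\infty_\sigma $ via a Banach fixed point argument; second, derive the two bounds on $ D\psi $ from a Neumann series expansion of $ (D\varphi)^{-1} $, which is the direct infinite-dimensional analogue of the classical Neumann lemma.

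To construct $ \psi $, write $ \varphi = \mathrm{id} + v $ with $ v = \varphi - \mathrm{id} $, so that by hypothesis $ \|Dv\|_\sigma = \mu < 1 $. Solving $ \varphi(\psi(y)) = y $ is equivalent to finding a periodic $ w $ with $ \psi = \mathrm{id} + w $ and $ w(y) = -v(y + w(y)) $. Accordingly, define the operator $ \mathcal{F}(w) := -v \circ (\mathrm{id} + w) $ on a suitably small closed ball of $ (\mathcal{G}(\mathbb{T}^\infty_\sigma), \|\cdot\|_\sigma) $. The contraction estimate
\[
\mathcal{F}(w_1) - \mathcal{F}(w_2) = -\int_0^1 Dv\bigl(\mathrm{id} + w_2 + t(w_1 - w_2)\bigr)\, dt \cdot (w_1 - w_2)
\]
combined with the Banach algebra property of $ \|\cdot\|_\sigma $ under convolution and the bound $ \|Dv\|_\sigma = \mu < 1 $ gives that $ \mathcal{F} $ is $ \mu $-Lipschitz. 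Banach's fixed point theorem then produces $ w $, hence $ \psi $, as an element of the same analytic class.

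For the derivative bounds, differentiate the identity $ \varphi \circ \psi = \mathrm{id} $ to obtain $ D\psi(y) = (D\varphi(\psi(y)))^{-1} $. Writing $ D\varphi = \mathbb{I} - (\mathbb{I} - D\varphi) $ with $ \|\mathbb{I} - D\varphi\|_\sigma = \mu < 1 $, the matrix-valued Neumann series
\[
(D\varphi)^{-1} = \sum_{k = 0}^\infty (\mathbb{I} - D\varphi)^k
\]
converges in $ \|\cdot\|_\sigma $, yielding $ \|(D\varphi)^{-1}\|_\sigma \leqslant 1/(1 - \mu) $ and $ \|(D\varphi)^{-1} - \mathbb{I}\|_\sigma \leqslant \mu/(1 - \mu) $. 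Since right-composition with a near-identity self-map of $ \mathbb{T}^\infty_\sigma $ does not increase the $ \sigma $-norm (a fact I would record as a preliminary sub-lemma, invoking the exponential-weighted Fourier characterisation of $ \|\cdot\|_\sigma $ and Remark \ref{sanjiao}), the same bounds transfer from $ (D\varphi)^{-1} $ to $ D\psi = (D\varphi)^{-1} \circ \psi $.

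The main obstacle is to verify cleanly that $ \|\cdot\|_\sigma $ is compatible with the two non-trivial operations appearing above: infinite matrix multiplication, needed for the Neumann series to converge in an operator sense over the $ \mathbb{Z} $-indexed entries, and right-composition by a near-identity map of $ \mathbb{T}^\infty_\sigma $. Both are routine in finite dimension but must be handled carefully here because the weight $ \langle j \rangle^\eta $ built into $ \mathbb{T}^\infty_\sigma $ governs when infinite sums converge and when analyticity is preserved under composition. Once this bookkeeping is in place, the remainder of the argument is a direct transcription of the standard Neumann lemma proof.
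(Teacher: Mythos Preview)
The paper states this lemma without proof, so there is nothing to compare against; your proposal is a plausible reconstruction of the standard argument, and the fixed-point construction of $\psi$ together with the Neumann series for $(D\varphi)^{-1}$ is the natural route.

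There is, however, a genuine gap in your final step. You assert that ``right-composition with a near-identity self-map of $\mathbb{T}^\infty_\sigma$ does not increase the $\sigma$-norm''. This is false in the norm used here, and the paper's own Transformation Lemma (Lemma~\ref{VECL2}) says precisely the opposite: if $\|\psi-\mathrm{id}\|_\sigma\le a$ then $\|f\circ\psi\|_\sigma\le\|f\|_{\sigma+a}$, i.e.\ one must pay for composition by enlarging the strip. Since your hypothesis only controls $D\varphi-\mathbb{I}$ at width $\sigma$, you cannot bound $\|(D\varphi)^{-1}\|_{\sigma+a}$ from it, and the passage from $(D\varphi)^{-1}$ to $D\psi=(D\varphi)^{-1}\circ\psi$ does not go through as written. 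The same obstruction hits the fixed-point part: the contraction estimate for $\mathcal{F}$ needs $\|Dv\circ(\mathrm{id}+w_2+t(w_1-w_2))\|_\sigma$, which by Lemma~\ref{VECL2} is bounded by $\|Dv\|_{\sigma+a}$, not $\|Dv\|_\sigma$.

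In practice the lemma is used in the paper only with slack (e.g.\ $\|D\Psi-\mathbb{I}\|_\sigma\le 1/7$ in the Step Lemma), and the applications would survive a version with a small loss $\sigma\mapsto\sigma-\delta$ on the conclusion, or equivalently a hypothesis at a slightly larger width. If you want the statement exactly as written, you would need either to set up the contraction directly for $D\psi-\mathbb{I}$ (so the bound emerges without a separate composition), or to interpret the matrix norm in a way that genuinely makes composition non-expansive---but the latter is not what the paper's $\|\cdot\|_\sigma$ does.
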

\begin{proof}
It is evident from the Neumann type argument.
\end{proof}

\begin{lemma}[Transformation Lemma]\label{VECL2}
Consider a diffeomorphism $ \varphi $ of the torus $ \mathbb{T}^\infty $ extending to $ \mathbb{T}^\infty_\sigma $ with $ \sigma >0 $. If $ {\left\| {\varphi  - {\rm id}} \right\|_\sigma } \leqslant a $ for some $ a\geqslant0 $, then for $ f \in \mathcal{G}\left( {\mathbb{T}_{\sigma+a} ^\infty } \right) $, it holds that
\[{\left\| {f \circ \varphi } \right\|_\sigma } \leqslant {\left\| f \right\|_{\sigma  + a}}.\]
\end{lemma}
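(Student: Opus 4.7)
\textbf{Proof plan for Lemma \ref{VECL2}.} The plan is to reduce the estimate to Fourier series manipulations, exploiting the submultiplicativity of the exponentially weighted norm $\|\cdot\|_\sigma$ together with the shift identity in Remark \ref{sanjiao}. First I would expand $f$ as a Fourier series $f(x) = \sum_{k \in \mathbb{Z}_*^\infty} \hat f(k)\, {\rm e}^{{\rm i} k\cdot x}$ and write
\[
f\circ \varphi (x) \;=\; \sum_{k \in \mathbb{Z}_*^\infty} \hat f(k)\, {\rm e}^{{\rm i} k\cdot x}\cdot {\rm e}^{{\rm i} k\cdot (\varphi(x)-x)},
\]
so that by the triangle inequality for $\|\cdot\|_\sigma$ the problem reduces to bounding $\|{\rm e}^{{\rm i} k\cdot x}\,{\rm e}^{{\rm i} k\cdot (\varphi-{\rm id})}\|_\sigma$ uniformly in $k$.

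Next I would establish and use two auxiliary facts about $\|\cdot\|_\sigma$. The first is its Banach algebra property, $\|gh\|_\sigma\leqslant \|g\|_\sigma\|h\|_\sigma$, which is immediate from the convolution formula for Fourier coefficients together with the inequality $\mathrm{e}^{\sigma|k+k'|_\eta}\leqslant \mathrm{e}^{\sigma|k|_\eta}\mathrm{e}^{\sigma|k'|_\eta}$; this gives
\[
\|{\rm e}^{{\rm i} k\cdot x}\,{\rm e}^{{\rm i} k\cdot (\varphi-{\rm id})}\|_\sigma \leqslant \|{\rm e}^{{\rm i} k\cdot x}\|_\sigma\,\|{\rm e}^{{\rm i} k\cdot (\varphi-{\rm id})}\|_\sigma = {\rm e}^{\sigma|k|_\eta}\,\|{\rm e}^{{\rm i} k\cdot (\varphi-{\rm id})}\|_\sigma,
\]
where I have used Remark \ref{sanjiao}. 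The second fact I need is the scalar estimate
\[
\|k\cdot(\varphi-{\rm id})\|_\sigma \;\leqslant\; |k|_\eta \cdot \|\varphi-{\rm id}\|_\sigma \;\leqslant\; a\,|k|_\eta,
\]
which should follow from the implicit weight convention on maps $\mathbb{T}^\infty_\sigma\to \mathbb{C}^{\mathbb{Z}}$ (the components of $\varphi-{\rm id}$ being measured against the weight $\langle j\rangle^\eta$ that thickens the torus); this is the step most likely to require unpacking a convention rather than a genuine computation.

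Combining these, I would expand the exponential as a power series and apply the algebra property term by term:
\[
\|{\rm e}^{{\rm i} k\cdot (\varphi-{\rm id})}\|_\sigma \;\leqslant\; \sum_{n=0}^{\infty}\frac{\|k\cdot(\varphi-{\rm id})\|_\sigma^{\,n}}{n!} \;\leqslant\; \sum_{n=0}^{\infty}\frac{(a|k|_\eta)^n}{n!} \;=\; {\rm e}^{a|k|_\eta}.
\]
Plugging this back yields $\|{\rm e}^{{\rm i} k\cdot \varphi}\|_\sigma \leqslant {\rm e}^{(\sigma+a)|k|_\eta}$, and summing against $|\hat f(k)|$ gives exactly
\[
\|f\circ\varphi\|_\sigma \;\leqslant\; \sum_{k\in \mathbb{Z}_*^\infty}|\hat f(k)|\,{\rm e}^{(\sigma+a)|k|_\eta} \;=\; \|f\|_{\sigma+a}.
\]

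The main obstacle I anticipate is the bookkeeping around the weight $\langle j\rangle^\eta$: one must be sure that $\|\varphi-{\rm id}\|_\sigma\leqslant a$ is interpreted so that $\sum_j |k_j|\,\|\varphi_j-x_j\|_\sigma \leqslant a\sum_j |k_j|\langle j\rangle^\eta = a|k|_\eta$ (otherwise the factor $|k|_\eta$ in the exponent would be lost and the target bound $\|f\|_{\sigma+a}$ could not be reached). Verifying convergence of the series $\sum_k |\hat f(k)|\,{\rm e}^{(\sigma+a)|k|_\eta}$ under the hypothesis $f\in\mathcal{G}(\mathbb{T}^\infty_\sigma)$ does not cause any issue, since our starting point is already an analytic function; in fact the inequality implicitly enlarges the analyticity width from $\sigma$ to $\sigma+a$ on the right-hand side, which is the statement being proved.
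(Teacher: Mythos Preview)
Your proposal is correct and follows essentially the same route as the paper's proof: expand $f$ in Fourier series, split ${\rm e}^{{\rm i}k\cdot\varphi}$ into ${\rm e}^{{\rm i}k\cdot x}\cdot{\rm e}^{{\rm i}k\cdot(\varphi-{\rm id})}$, invoke Remark~\ref{sanjiao} for the first factor, and bound the second by expanding the exponential as a power series and using $\|k\cdot(\varphi-{\rm id})\|_\sigma\leqslant |k|_\eta\,a$. The only difference is presentational: you make the Banach algebra property of $\|\cdot\|_\sigma$ explicit and flag the weight bookkeeping behind the estimate $\|k\cdot(\varphi-{\rm id})\|_\sigma\leqslant a|k|_\eta$, whereas the paper simply writes ``with $|k|\leqslant|k|_\eta$ and triangle inequality'' and proceeds; your caution there is well placed but does not alter the argument.
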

\begin{proof}
With
\[f \circ \varphi \left( x \right) = \sum\limits_{k \in \mathbb{Z}_ * ^\infty } {\hat f\left( k  \right)\exp \left( {{\rm i}k  \cdot x} \right) \cdot \exp \left( {k  \cdot \left( {\varphi \left( x \right) - {\rm id}} \right)} \right)}, \]
we have
\[{\left\| {f \circ \varphi } \right\|_\sigma } \leqslant \sum\limits_{k \in \mathbb{Z}_ * ^\infty } {| {\hat f\left( k  \right)} |{{\left\| {\exp \left( {{\rm i}k  \cdot x} \right)} \right\|}_\sigma } \cdot {{\left\| {\exp \left( {k  \cdot \left( {\varphi \left( x \right) - {\rm id}} \right)} \right)} \right\|}_\sigma }} .\]
By Remark \ref{sanjiao}, one obtains that $ {\left\| {\exp \left( {{\rm i}k  \cdot x} \right)} \right\|_\sigma } = {{\rm e}^{\sigma {{\left| k  \right|}_\eta }}} $. On the other hand, with $ \left| k  \right| \leqslant {\left| k  \right|_\eta } $ and the triangle inequality,  we can prove that
\begin{align*} 
{\left\| {\exp \left( {k  \cdot \left( {\varphi \left( x \right) - {\rm id}} \right)} \right)} \right\|_\sigma } &\leqslant \sum\limits_{j = 0}^\infty  {\frac{1}{{j!}}\left\| {k  \cdot \left( {\varphi \left( x \right) - {\rm id}} \right)} \right\|_\sigma ^j}  \leqslant \sum\limits_{j = 0}^\infty  {\frac{{\left| k  \right|_\eta ^j}}{{j!}}\left\| {\varphi \left( x \right) - {\rm id}} \right\|_\sigma ^j} \\
& \leqslant \sum\limits_{j = 0}^\infty  {\frac{{{a^j}\left| k  \right|_\eta ^j}}{{j!}}}  = {{\rm e}^{a{{\left| k  \right|}_\eta }}}.
\end{align*}
Therefore, it follows that
\[{\left\| {f \circ \varphi } \right\|_\sigma } \leqslant \sum\limits_{k \in \mathbb{Z}_ * ^\infty } {| {\hat f\left( k  \right)} |{{\rm e}^{\left( {\sigma  + a} \right){{\left| k  \right|}_\eta }}}}  = {\left\| f \right\|_{\sigma  + a}},\]
as desired.
\end{proof}

\begin{lemma}[Cauchy's estimate Lemma]\label{VECL3}
Let $ f,\varphi\in  \mathcal{G}\left( {\mathbb{T}_\sigma ^\infty } \right) $ be given, where $ \sigma>0 $. Then for $ 0<\alpha<1 $, it holds that
\[{\left\| {Df \cdot \varphi } \right\|_{\alpha \sigma}} \leqslant \frac{1}{{{\rm e}\left( {1 - \alpha } \right)\sigma}}{\left\| f \right\|_\sigma}{\left\| \varphi  \right\|_{\alpha\sigma}}.\]
\end{lemma}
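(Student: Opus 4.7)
The plan is to expand $f$ in its Fourier series, differentiate term by term, and reduce the required bound to a one-variable calculus inequality that exchanges a factor of $|k|_\eta$ for a small loss of analyticity width together with the prefactor $1/[\mathrm{e}(1-\alpha)\sigma]$.

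Concretely, I would start from the Fourier expansion $f(x) = \sum_{k \in \mathbb{Z}_*^\infty} \hat f(k)\, \mathrm{e}^{\mathrm{i} k \cdot x}$, whence term-by-term differentiation gives
\[
Df(x) \cdot \varphi(x) = \sum_{k \in \mathbb{Z}_*^\infty} \mathrm{i}\, \hat f(k)\, (k \cdot \varphi(x))\, \mathrm{e}^{\mathrm{i} k \cdot x}.
\]
Applying the triangle inequality for the $\|\cdot\|_{\alpha\sigma}$ norm, invoking Remark \ref{sanjiao} to identify $\|\mathrm{e}^{\mathrm{i} k \cdot x}\|_{\alpha\sigma} = \mathrm{e}^{\alpha\sigma |k|_\eta}$, and repeating the vector-valued bound already used in the proof of Lemma \ref{VECL2} (namely $\|k \cdot \varphi\|_{\alpha\sigma} \leqslant |k|_\eta \|\varphi\|_{\alpha\sigma}$, which follows from $\sum_j |k_j| \leqslant \sum_j \langle j \rangle^\eta |k_j| = |k|_\eta$ together with the sup-norm convention on $\mathbb{R}^\mathbb{Z}$), this reduces the task to controlling
\[
\sum_{k \in \mathbb{Z}_*^\infty} |\hat f(k)|\, |k|_\eta\, \mathrm{e}^{\alpha\sigma |k|_\eta}.
\]

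The heart of the argument is the elementary inequality $x \mathrm{e}^{-c x} \leqslant 1/(\mathrm{e} c)$ valid for all $c, x > 0$, obtained by maximizing $x \mapsto x \mathrm{e}^{-c x}$ at $x = 1/c$. Applying it with $c = (1-\alpha)\sigma$ and $x = |k|_\eta$ produces $|k|_\eta\, \mathrm{e}^{\alpha\sigma |k|_\eta} \leqslant \frac{1}{\mathrm{e}(1-\alpha)\sigma}\, \mathrm{e}^{\sigma |k|_\eta}$. Substituting this into the previous display collapses the exponential weight back to $\mathrm{e}^{\sigma |k|_\eta}$, so the sum is bounded by $\|f\|_\sigma / [\mathrm{e}(1-\alpha)\sigma]$, which yields the claim. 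The only real obstacle is interpreting the contraction $Df \cdot \varphi$ unambiguously in the infinite dimensional setting; once the sup-norm convention on $\mathbb{R}^\mathbb{Z}$ is applied consistently and the auxiliary bound on $\|k \cdot \varphi\|_{\alpha\sigma}$ is in place, everything else is routine bookkeeping.
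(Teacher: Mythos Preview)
Your proof is correct and follows essentially the same route as the paper's: Fourier-expand $f$, exchange the factor $|k|_\eta$ for a loss of width via the calculus inequality $x\,\mathrm{e}^{-cx}\leqslant 1/(\mathrm{e}c)$, and recognise $\|f\|_\sigma$ and $\|\varphi\|_{\alpha\sigma}$ in the result. The only cosmetic difference is that the paper also expands $\varphi=\sum_\ell \hat\varphi(\ell)\mathrm{e}^{\mathrm{i}\ell\cdot x}$ explicitly and works with the double sum over $(k,\ell)$, whereas you package that step as the product bound $\|(k\cdot\varphi)\,\mathrm{e}^{\mathrm{i}k\cdot x}\|_{\alpha\sigma}\leqslant \|k\cdot\varphi\|_{\alpha\sigma}\,\|\mathrm{e}^{\mathrm{i}k\cdot x}\|_{\alpha\sigma}$; this is the Banach-algebra (submultiplicativity) property of $\|\cdot\|_{\alpha\sigma}$, which is true (via $|k+\ell|_\eta\leqslant|k|_\eta+|\ell|_\eta$) but not stated elsewhere in the paper, so you may want to name it rather than fold it into ``the triangle inequality.''
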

\begin{proof}
It is evident that
\[Df \cdot \varphi  = {\rm i}\sum\limits_{k \in \mathbb{Z}_ * ^\infty } {\left( {k \cdot \varphi } \right)\hat f\left( k \right){{\rm e}^{{\rm i}k \cdot x}}}  = {\rm i}\sum\limits_{k \in \mathbb{Z}_ * ^\infty } {\sum\limits_{\ell  \in \mathbb{Z}_ * ^\infty } {\left( {k \cdot \hat \varphi \left( \ell  \right)} \right)\hat f\left( k \right){{\rm e}^{{\rm i}\left( {k + \ell } \right) \cdot x}}} } .\]
Recalling Remark \ref{sanjiao}, we have
\[{\left\| {\exp \left( {{\rm i}\left( {k + \ell } \right) \cdot x} \right)} \right\|_{\alpha \sigma}} = \exp \left( {\alpha \sigma{{\left| {k + \ell } \right|}_\eta }} \right) \leqslant \exp \left( {\alpha \sigma{{\left| k \right|}_\eta } + \alpha \sigma{{\left| \ell  \right|}_\eta }} \right) = {{\rm e}^{\alpha \sigma{{\left| k \right|}_\eta }}} \cdot {{\rm e}^{\alpha \sigma{{\left| \ell  \right|}_\eta }}}.\]
Therefore, by $ \left| k \right| \leqslant {\left| k \right|_\eta } $, one derives the conclusion as
\begin{align}
{\left\| {Df \cdot \varphi } \right\|_{\alpha \sigma}} &\leqslant \sum\limits_{k \in \mathbb{Z}_ * ^\infty } {\sum\limits_{\ell  \in \mathbb{Z}_ * ^\infty } {\left| k \right|| {\hat \varphi \left( \ell  \right)} || {\hat f\left( k \right)} |{{\left\| {\exp \left( {{\rm i}\left( {k + \ell } \right) \cdot x} \right)} \right\|}_{\alpha \sigma}}} } \notag \\
& \leqslant \sum\limits_{k \in \mathbb{Z}_ * ^\infty } {\sum\limits_{\ell  \in \mathbb{Z}_ * ^\infty } {{{\left| k \right|}_\eta }\left| {\hat \varphi \left( \ell  \right)} \right|| {\hat f\left( k \right)} |{{\rm e}^{\alpha \sigma{{\left| k \right|}_\eta }}} \cdot {{\rm e}^{\alpha \sigma{{\left| \ell  \right|}_\eta }}}} }\notag \\
& \leqslant \left( {\sum\limits_{k \in \mathbb{Z}_ * ^\infty } {{{\left| k \right|}_\eta }{{\rm e}^{ - \left( {1 - \alpha } \right)\sigma{{\left| k \right|}_\eta }}} \cdot | {\hat f\left( k \right)} |{{\rm e}^{\sigma{{\left| k \right|}_\eta }}}} } \right) \cdot \left( {\sum\limits_{\ell  \in \mathbb{Z}_ * ^\infty } {\left| {\hat \varphi \left( \ell  \right)} \right|{{\rm e}^{\alpha \sigma{{\left| \ell  \right|}_\eta }}}} } \right)\notag\\
\label{VECSUP1}& \leqslant \frac{1}{{{\rm e}\left( {1 - \alpha } \right)\sigma}}{\left\| f \right\|_\sigma}{\left\| \varphi  \right\|_{\alpha\sigma}},
\end{align}
where the following trivial fact is employed in \eqref{VECSUP1}:
\[\mathop {\sup }\limits_{k \in \mathbb{Z}_ * ^\infty } \left( {{{\left| k \right|}_\eta }{{\rm e}^{ - \left( {1 - \alpha } \right)\sigma{{\left| k \right|}_\eta }}}} \right) \leqslant \mathop {\sup }\limits_{t \geqslant 0} \left( {t{{\rm e}^{ - \left( {1 - \alpha } \right)st}}} \right) = \frac{1}{{{\rm e}\left( {1 - \alpha } \right)\sigma}}.\]
\end{proof}

Let a map $f \in \mathcal{G}\left( {\mathbb{T}_\sigma ^\infty } \right)$ be given. Then for $K \in \mathbb{N}$, let us define the truncation map $\mathcal{T}_K$ as well as the residual map $\mathcal{R}_{K}$ as 
\[\mathcal{T}_Kf = \sum\limits_{ \left| k \right|_\eta \leqslant K} {{\hat f\left( k \right)}{{\rm e}^{{\rm i} k \cdot x }}} ,\;\;\mathcal{R}_{K}f = \sum\limits_{ \left| k \right|_\eta > K} {{\hat f\left( k \right)}{{\rm e}^{{\rm i}k \cdot x }}} ,\]
respectively. Therefore,  $\mathcal{T}_K+\mathcal{R}_{K}={\rm id}$ holds for all $ K \in \mathbb{N} $. In particular,  $\mathcal{T}_K+\mathcal{R}_{K}={\rm id}$ for all maps $f$ without Fourier constants and $ K\in \mathbb{N}^+ $. 

\begin{lemma}[Cauchy type Lemma]\label{VECCAULE}
Let $f \in \mathcal{G}\left( {\mathbb{T}_{\sigma} ^\infty } \right)$ without the  Fourier constant term be given, where $ \sigma>0 $. Then  it holds that
\[{\left\| {D{\mathcal{T}_k}f} \right\|_{\alpha \sigma}} \leqslant K{\left\| {{\mathcal{T}_k}f} \right\|_{\alpha \sigma}}.\]
Moreover, we have
\[{\left\| {Df} \right\|_0} \leqslant \frac{1}{{\rm e}\sigma }{\left\| f \right\|_\sigma }.\]
\end{lemma}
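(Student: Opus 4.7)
The plan is to treat the two inequalities separately but along essentially the same lines as the proof of Lemma \ref{VECL3}, exploiting the explicit Fourier expansion of $f$ and the definition of the norm $\|\cdot\|_{\sigma}$.

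For the first inequality, I would write out the derivative on the truncated Fourier series,
\[
D\mathcal{T}_K f = {\rm i}\sum\limits_{|k|_\eta \leqslant K} k\,\hat f(k)\,{\rm e}^{{\rm i}k\cdot x},
\]
apply the triangle inequality term by term, and invoke Remark \ref{sanjiao} to evaluate $\|{\rm e}^{{\rm i}k\cdot x}\|_{\alpha\sigma} = {\rm e}^{\alpha\sigma|k|_\eta}$. Using the elementary comparison $|k|\leqslant |k|_\eta$ together with the fact that in the truncated sum $|k|_\eta \leqslant K$, the factor $|k|_\eta$ pulls out of the sum as a uniform constant $K$, which yields
\[
\|D\mathcal{T}_K f\|_{\alpha\sigma} \leqslant \sum\limits_{|k|_\eta\leqslant K} |k|_\eta\,|\hat f(k)|\,{\rm e}^{\alpha\sigma|k|_\eta} \leqslant K \|\mathcal{T}_K f\|_{\alpha\sigma}.
\]

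For the second inequality, which plays the role of a genuine Cauchy estimate, I would again expand $Df = {\rm i}\sum_k k\,\hat f(k)\,{\rm e}^{{\rm i}k\cdot x}$ and use $\|{\rm e}^{{\rm i}k\cdot x}\|_0 = 1$ (since $\sigma$ has degenerated to $0$). The key trick, already employed in Lemma \ref{VECL3}, is to insert the identity $1 = {\rm e}^{-\sigma|k|_\eta}\cdot {\rm e}^{\sigma|k|_\eta}$ and extract the maximum of the auxiliary function $t\mapsto t\,{\rm e}^{-\sigma t}$ on $[0,+\infty)$, which is attained at $t = 1/\sigma$ with value $1/({\rm e}\sigma)$. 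This gives
\[
\|Df\|_0 \leqslant \sum\limits_{k\in\mathbb{Z}^\infty_*} |k|_\eta\,{\rm e}^{-\sigma|k|_\eta}\,|\hat f(k)|\,{\rm e}^{\sigma|k|_\eta} \leqslant \frac{1}{{\rm e}\sigma}\|f\|_\sigma,
\]
which is precisely the claimed bound.

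There is no real obstacle here; both statements reduce to monotone bookkeeping on the Fourier coefficients, together with the one-variable calculus fact $\max_{t\geqslant 0} t\,{\rm e}^{-\sigma t} = 1/({\rm e}\sigma)$. The only minor point to be careful about is that the assumption of no Fourier constant term is used implicitly so that the sums can be indexed over $\mathbb{Z}_*^\infty\setminus\{0\}$, ensuring $|k|_\eta>0$ and making the exponential weighting meaningful in the supremum argument; this is what allows the two estimates to be genuine derivative bounds rather than trivialities.
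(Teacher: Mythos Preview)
Your proposal is correct and follows essentially the same approach as the paper's proof: term-by-term estimation of the Fourier expansion using $|k|\leqslant|k|_\eta$ and Remark~\ref{sanjiao}, with the truncation bound $|k|_\eta\leqslant K$ for the first inequality and the one-variable bound $t\leqslant {\rm e}^{\sigma t}/({\rm e}\sigma)$ for the second. The only cosmetic difference is that the paper phrases the latter as the elementary inequality ${\rm e}t\leqslant {\rm e}^t$ rather than as the supremum $\max_{t\geqslant 0} t\,{\rm e}^{-\sigma t}=1/({\rm e}\sigma)$, but these are equivalent.
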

\begin{proof}
Using $ \left| k \right| \leqslant {\left| k \right|_\eta } $ and recalling Remark \ref{sanjiao}, one directly proves that
\begin{align*}
{\left\| {D{\mathcal{T}_k}f} \right\|_{ \sigma}} &\leqslant \sum\limits_{0 < {{\left| k \right|}_\eta }  \leqslant K} {\left| k \right||\hat F\left( k \right)|{{\| {{{\rm e}^{{\rm i}k \cdot x}}} \|}_{ \sigma}}}  \leqslant \sum\limits_{0 < {{\left| k \right|}_\eta } \leqslant K} {{{\left| k \right|}_\eta }|\hat F\left( k \right)|{{\rm e}^{ \sigma{{\left| k \right|}_\eta }}}} \\
& \leqslant K\sum\limits_{0 < {{\left| k \right|}_\eta } \leqslant K} {|\hat F\left( k \right)|{{\rm e}^{ \sigma{{\left| k \right|}_\eta }}}}  = K{\left\| {{\mathcal{T}_k}f} \right\|_{ \sigma}}.
\end{align*}
On the other hand, by $ \left| k \right| \leqslant {\left| k \right|_\eta } $ and ${\rm e} t \leqslant {{\rm e}^t} $, it is shown that
\[{\left\| {Df} \right\|_0} \leqslant \sum\limits_{k \in \mathbb{Z}_ * ^\infty } {{{\left| k \right|}_\eta }|\hat f(k)|}  \leqslant \frac{1}{{\rm e}\sigma }\sum\limits_{k \in \mathbb{Z}_ * ^\infty } {|\hat f(k)|{{\rm e}^{\sigma {{\left| k \right|}_\eta }}}}  = \frac{1}{{\rm e}\sigma }{\left\| f \right\|_\sigma }.\]
\end{proof}

\begin{lemma}[Small divisor Lemma]\label{VECL4}
Let a nonresonant frequency $\omega \in \mathbb{T}^\infty$ be given. Then for $f \in \mathcal{G}\left( {\mathbb{T}_\sigma ^\infty } \right)$ without the Fourier constant term, where $ \sigma>0 $, the unique solution to the homological truncation equation 
\begin{equation}\label{VECHOMO}
	\omega  \cdot \partial g = \mathcal{T}_Kf,\;\;g: = \mathscr{L}\mathcal{T}_Kf
\end{equation}
satisfies the estimate
\[{\left\| {\mathscr{L}{\mathcal{T}_K}f} \right\|_\sigma} \leqslant \left( {\mathop {\max }\limits_{0 < {{\left| k \right|}_\eta } \leqslant K} {{\left| {k \cdot \omega } \right|}^{ - 1}}} \right){\left\| f \right\|_\sigma}.\]
\end{lemma}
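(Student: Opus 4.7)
The plan is to expand $\mathcal{T}_K f$ in its (finite) Fourier series, solve the homological equation coefficient by coefficient, and then estimate the $\sigma$-weighted norm by pulling out the worst small divisor as a supremum. First I would write $\mathcal{T}_K f = \sum_{0 < |k|_\eta \leq K} \hat{f}(k)\,{\rm e}^{{\rm i}k\cdot x}$ and look for a solution $g = \sum_{0 < |k|_\eta \leq K} \hat{g}(k)\,{\rm e}^{{\rm i}k\cdot x}$ with zero Fourier constant (mirroring the assumption on $f$). Substituting into $\omega\cdot\partial g = \mathcal{T}_K f$ and matching Fourier modes yields the single scalar equation ${\rm i}(k\cdot\omega)\hat{g}(k)=\hat{f}(k)$ for each admissible $k$. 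Since the nonresonance of $\omega$ forces $k\cdot\omega\neq 0$ whenever $0<\sum_j|k_j|<+\infty$, the coefficients $\hat{g}(k)=\hat{f}(k)/({\rm i}\,k\cdot\omega)$ are well-defined and unique, and we set $g =: \mathscr{L}\mathcal{T}_K f$.

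Next I would verify that the index set $\{k \in \mathbb{Z}_*^\infty : 0<|k|_\eta \leq K\}$ is actually finite, so that the maximum in the statement is attained (not merely a supremum). Indeed, any nonzero component $k_j$ contributes at least $\langle j\rangle^\eta$ to $|k|_\eta$, which forces both $|k_j| \leq K$ and $\langle j\rangle^\eta \leq K$; with $\eta>0$ this restricts the support of $k$ to a finite range of indices $j$ and each $k_j$ to a finite range of integers. Hence $M := \max_{0<|k|_\eta \leq K}|k\cdot\omega|^{-1}$ is a finite positive number.

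Finally, the estimate is immediate from the definition of the $\sigma$-norm and Remark \ref{sanjiao}:
\[
\|\mathscr{L}\mathcal{T}_K f\|_\sigma
= \sum_{0<|k|_\eta \leq K} |\hat{g}(k)|\,{\rm e}^{\sigma|k|_\eta}
= \sum_{0<|k|_\eta \leq K} \frac{|\hat{f}(k)|}{|k\cdot\omega|}\,{\rm e}^{\sigma|k|_\eta}
\leq M \sum_{0<|k|_\eta \leq K} |\hat{f}(k)|\,{\rm e}^{\sigma|k|_\eta}
\leq M\,\|f\|_\sigma,
\]
which is the claimed bound. There is no genuine obstacle here: this is the classical mode-by-mode inversion of a small divisor, and the only point that requires a moment's care is the finiteness of the truncated index set, which is what rescues the naive pointwise bound in the infinite dimensional setting. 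The real work of the paper lies not in this lemma but in how the factor $M$ is balanced against the regularity weight $m$ during the KAM iteration.
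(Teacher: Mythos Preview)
Your proof is correct and follows essentially the same route as the paper: write the solution as $\mathscr{L}\mathcal{T}_K f = \sum_{0<|k|_\eta\leq K}\hat f(k)/({\rm i}k\cdot\omega)\,{\rm e}^{{\rm i}k\cdot x}$, then bound $\|\cdot\|_\sigma$ by pulling out the worst small divisor and dominating the remaining sum by $\|f\|_\sigma$. Your additional remark that the truncated index set is finite (so the maximum is attained) is a small clarification the paper leaves implicit.
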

\begin{proof}
The unique solution to the homological truncation equation \eqref{VECHOMO} can be written as
\[\mathscr{L}{\mathcal{T}_K}f = \sum\limits_{0 \ne {{\left| k \right|}_\eta } \leqslant K} {\frac{{\hat f\left( k \right)}}{{{\rm i}k \cdot \omega}}{{\rm e}^{{\rm i}k \cdot x}}} .\]
As a consequence, one concludes 
\begin{align*}
{\left\| {\mathscr{L}{\mathcal{T}_K}f} \right\|_\sigma} &\leqslant \sum\limits_{0 \ne {{\left| k \right|}_\eta } \leqslant K} {\frac{{| {\hat f\left( k \right)} |}}{{\left| {k \cdot \omega} \right|}}{{\| {{{\rm e}^{{\rm i}k \cdot x}}} \|}_\sigma}}  \leqslant \left( {\mathop {\max }\limits_{0 < {{\left| k \right|}_\eta } \leqslant K} {{\left| {k \cdot \omega } \right|}^{ - 1}}} \right)\left( {\sum\limits_{0 \ne {{\left| k \right|}_\eta } \leqslant K} {| {\hat f\left( k \right)} |{{\rm e}^{\sigma{{\left| k \right|}_\eta }}}} } \right)\\
& = \left( {\mathop {\max }\limits_{0 < {{\left| k \right|}_\eta } \leqslant K} {{\left| {k \cdot \omega } \right|}^{ - 1}}} \right){\left\| {{\mathcal{T}_K}f} \right\|_\sigma}
\end{align*}
due to Remark \ref{sanjiao}, which proves the lemma.
\end{proof}

\begin{lemma}[Residual term Lemma]\label{VECL5}
Let $f \in \mathcal{G}\left( {\mathbb{T}_\sigma ^\infty } \right)$ be given, where $ \sigma>0 $. Then for $ 0<\alpha <1 $, it holds that
\[{\left\| \mathcal{R}_{K}f \right\|_{\alpha \sigma }} \leqslant {{\rm e}^{ - \left( {1 - \alpha } \right)\sigma K}}{\left\| \mathcal{R}_{K}f \right\|_\sigma }.\]
\end{lemma}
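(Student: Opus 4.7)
\medskip

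My plan is to unfold both weighted norms via the Fourier series definition and then pull a uniform exponential factor outside the sum using the constraint $|k|_\eta > K$ that defines the range of summation in $\mathcal{R}_K f$.

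First I would write out
\[
\|\mathcal{R}_K f\|_{\alpha \sigma} = \sum_{|k|_\eta > K} |\hat{f}(k)|\, e^{\alpha \sigma |k|_\eta},
\]
using Remark \ref{sanjiao} for the norm of each Fourier monomial. Next I would split the exponent as $\alpha \sigma |k|_\eta = \sigma|k|_\eta - (1-\alpha)\sigma|k|_\eta$, so that each summand becomes $|\hat{f}(k)|\, e^{\sigma|k|_\eta} \cdot e^{-(1-\alpha)\sigma|k|_\eta}$.

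The key observation, and the only step that really uses the structure of $\mathcal{R}_K$, is that the range of summation forces $|k|_\eta > K$, hence
\[
e^{-(1-\alpha)\sigma |k|_\eta} \leq e^{-(1-\alpha)\sigma K}
\]
since $1-\alpha > 0$ and $\sigma > 0$ make the exponential decreasing in $|k|_\eta$. This bound is uniform in $k$, so I can factor it out of the sum:
\[
\|\mathcal{R}_K f\|_{\alpha \sigma} \leq e^{-(1-\alpha)\sigma K} \sum_{|k|_\eta > K} |\hat{f}(k)|\, e^{\sigma|k|_\eta} = e^{-(1-\alpha)\sigma K}\,\|\mathcal{R}_K f\|_\sigma,
\]
which is exactly the desired inequality.

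There is no real obstacle here; the estimate is a clean exponential tail bound that follows immediately from the definition of the $\|\cdot\|_\sigma$ norm as a weighted $\ell^1$ norm on Fourier coefficients with exponential weight $e^{\sigma |k|_\eta}$. The only thing to be careful about is making sure the factor $e^{-(1-\alpha)\sigma K}$ is obtained with the sharp constant (no loss), which is automatic from the decomposition above. Nothing deeper, e.g. no Cauchy-like balancing or approximation argument, is needed.
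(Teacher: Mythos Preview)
Your proof is correct and follows essentially the same route as the paper's own argument: expand the norm as a weighted Fourier sum via Remark~\ref{sanjiao}, split the exponential weight as $e^{\alpha\sigma|k|_\eta} = e^{\sigma|k|_\eta}\cdot e^{-(1-\alpha)\sigma|k|_\eta}$, and use the constraint $|k|_\eta > K$ to bound the decaying factor uniformly by $e^{-(1-\alpha)\sigma K}$. The paper phrases the extraction of the uniform bound as a supremum over $|k|_\eta > K$, but this is exactly your observation written differently.
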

\begin{proof}
With Remark \ref{sanjiao}, we prove that
\begin{align*}
{\left\| {{\mathcal{R}_K}f} \right\|_{\alpha \sigma }} &\leqslant \sum\limits_{{{\left| k \right|}_\eta } > K} {|\hat f\left( k \right)|{{\| {{{\rm e}^{{\rm i}k \cdot x}}} \|}_{\alpha \sigma }}}  \leqslant \sum\limits_{{{\left| k \right|}_\eta } > K} {|\hat f\left( k \right)|{{\rm e}^{\alpha \sigma {{\left| k \right|}_\eta }}}} \\
& \leqslant \left( {\mathop {\sup }\limits_{{{\left| k \right|}_\eta } > K} {{\rm e}^{\left( {\alpha  - 1} \right)\sigma {{\left| k \right|}_\eta }}}} \right)\left( {\sum\limits_{{{\left| k \right|}_\eta } > K} {|\hat f\left( k \right)|{{\rm e}^{\sigma {{\left| k \right|}_\eta }}}} } \right) \leqslant {{\rm e}^{ - \left( {1 - \alpha } \right)\sigma K}}{\left\| {{\mathcal{R}_K}f} \right\|_\sigma }.
\end{align*}
\end{proof}

\begin{lemma}[Diophantine small divisor Lemma]\label{VECSM}
	Let $ \eta>0 $ and  $ \mu>1 $ be given. Then, for $ N \gg1 $ and a positive constant $ C(\eta,\mu) $ only depending on $ \eta $ and $ \mu $, we have that for all $ k \in \mathbb{Z}_ * ^\infty $:
	\[\mathop {\sup }\limits_{{{\left| k \right|}_\eta } < N} \prod\limits_{j \in \mathbb{Z}} {\left( {1 + {{\left\langle j \right\rangle }^\mu }{{\left| {{k_j}} \right|}^\mu }} \right)}  \leqslant \exp \left( {C\left( {\eta ,\mu } \right){N^{\frac{1}{{1 + \eta }}}}\ln \left( {1 + N} \right)} \right).\]
	Moreover, the order $ {N^{\frac{1}{{1 + \eta }}}}\ln \left( {1 + N} \right) $ is sharp.
\end{lemma}
\begin{proof}
The first part follows from \cite[Lemma B.2]{MR4201442}, and for the sake of completeness, we shall provide a brief proof. We only prove the case of the unilateral infinite sequence, with $ 2 \leqslant \mu=\eta\in \mathbb{N}^+ $, without loss of generality. For any fixed $ k \in \mathbb{Z}_ * ^\infty  $,  denote by $ m $ the number of nonzero components of $ k $. Then, if $ 1 \ll {\left| k \right|_\eta } \leqslant N $, we have
	\begin{align}
		N &\geqslant {\left| k \right|_\eta } = \sum\limits_{j \in \mathbb{N}} {\left| {{k_j}} \right|{{\left\langle j \right\rangle }^\eta }}  = \sum\limits_{i = 1}^m {\left| {{k_{{j_i}}}} \right|{{\left\langle {{j_i}} \right\rangle }^\eta }}  \geqslant \sum\limits_{i = 1}^m {{{\left\langle {{j_i}} \right\rangle }^\eta }} \notag \\
		\label{VECMULn}	& \geqslant \sum\limits_{i = 1}^m {{i^\eta }}  = {\mathcal{O}^\# }\left( {\int_1^m {{x^\eta }{\rm d}x} } \right) = {\mathcal{O}^\# }\left( m^{\frac{1}{{1 + \eta }}} \right),
	\end{align}
that is, $ m = \mathcal{O}\big( N^{\frac{1}{{1 + \eta }}} \big) $. Now, utilizing the inequality
	\[{\left| {{k_j}} \right|^\eta }{\left\langle j \right\rangle ^\eta } \leqslant {\left( {\left| {{k_j}} \right|{{\left\langle j \right\rangle }^\eta }} \right)^\eta } \leqslant {\left( {\sum\limits_{j \in \mathbb{N}} {\left| {{k_j}} \right|{{\left\langle j \right\rangle }^\eta }} } \right)^\eta } = \left| k \right|_\eta ^\eta  \leqslant {N^\eta },\]
we know that there exists a universal constant $ {{C \left(\eta\right) }}>0 $ only depending on $ \eta $, such that
	\begin{align}
		\mathop {\sup }\limits_{0 < {{\left| k \right|}_\eta } \leqslant N} \prod\limits_{j \in \mathbb{N}} {\left( {1 + {{\left| {{k_j}} \right|}^\eta }{{\left\langle j \right\rangle }^\eta }} \right)}  &= \mathop {\sup }\limits_{0 < {{\left| k \right|}_\eta } \leqslant N} \exp \left( {\sum\limits_{j \in \mathbb{N}} {\ln \left( {1 + {{\left| {{k_j}} \right|}^\eta }{{\left\langle j \right\rangle }^\eta }} \right)} } \right)\notag \\
		& \leqslant \exp \left( {\sum\limits_{ i  = 1}^m {\ln \left( {1 + {N^\eta }} \right)} } \right)\notag \\
		\label{VECMULXIAOCHUSHU}	& \leqslant \exp \left( {{{{C \left(\eta\right) }} }{N^{\frac{1}{{1 + \eta }}}}\ln \left(1+N\right)} \right).
	\end{align}
Next, we prove the second part. If we consider that the nonzero components of some $ \tilde k \in \mathbb{Z}_ * ^\infty $ are all modulus $ 1 $, and they are consecutively from $ 0 $, namely
\[\tilde k = (0, \ldots ,0,\underbrace {1,1, \ldots ,1}_m,0, \ldots ),\]
  then the argument in \eqref{VECMULn} leads to $ m = \mathcal{O}^{\#}\big( N^{\frac{1}{{1 + \eta }}} \big) $ as $  {\left| k \right|_\eta } \sim N $. Therefore, it follows that
	\begin{align*}
		\prod\limits_{j \in \mathbb{N}} {\left( {1 + {{| {{\tilde k_j}} |}^\eta }{{\left\langle j \right\rangle }^\eta }} \right)}  &= \exp \left( {\sum\limits_{j = 0}^m {\ln \left( {1 + {{\left\langle j \right\rangle }^\eta }} \right)} } \right) = \exp \left( {\eta \mathcal{O}^\#\left( {\int_1^m {\ln x{\rm d}x} } \right)} \right)\\
		& = \exp \left( {\eta \mathcal{O}^\#\left( {m\ln m} \right)} \right) = \exp \left( {\mathcal{O}^\#\left( {N^{\frac{1}{{1 + \eta }}}\ln \left(1+N\right)} \right)} \right),
	\end{align*}
	which shows the sharpness of the order $ {N^{\frac{1}{{1 + \eta }}}}\ln \left( {1 + N} \right) $.
\end{proof}

\subsection{Linearization of vector fields over $ \mathbb{T}^\infty $ via a non-Newtonian iteration}
In what follows, we always assume that the perturbation $ P $ of a constant vector field $ \omega $ on $\mathbb{T}^\infty$ has the Fourier  expansion series given by $ P = \sum\nolimits_{0 \ne k \in {\mathbb{Z}^\infty_*}} {{\hat P(k)}{{\rm e}^{{\rm i} {k\cdot x}  }}}  $, and possesses a  certain different regularity under different nonresonance for the frequency $ \omega $, which will be determined in Section \ref{VEC KAM Iteration}.

\subsubsection{Abstract  $ m $-weighted KAM Theorem}\label{VECABST}
To prove the specific KAM theorems in this paper, let us first establish an  abstract $ m $-weighted KAM result. As one will see later, our convergence rate can be \textit{arbitrarily slow} (which depends on the convergence rate of the  series $\sum\nolimits_{j=0}^\nu  {{\Delta _j}{\varepsilon _j}}  $ in the Iterative Lemma \ref{VECIterative}). In  KAM history, the iteration is always  Newtonian, that is,  super-exponential. However, when weakening regularity is considered, such a rapid convergence will become resistant. As demonstrated  in \cite{MR2350326,PoarXiv,TLCCM}, the convergence rate could also be arbitrarily slow, therefore sharp  regularity (in view of the historical counterexamples) has been achieved. Our Theorem \ref{VECABSTRACT} follows from the same viewpoint, and as a consequence, we obtain such a sharp result in the infinite-dimensional case for the first time.

\begin{theorem}[\textbf{Abstract $ m $-weighted KAM}]\label{VECABSTRACT}
	Let $ b>1 $ and a nonresonant constant vector field $ \omega \in  \mathbb{T}^\infty  $ be given.
	\\
	(I)	Suppose that there exist some $ 0<{q}<1 $ and a non-negative  sequence $ {\left\{ {{d_\nu }} \right\}_{\nu  \in \mathbb{N}}} $ with  $ \mathop {\overline {\lim } }\limits_{\nu  \to \infty } d_\nu ^{ - 1}{d_{\nu  + 1}} < b $, such that
	\begin{equation}\label{VECwudi}
	\sum\limits_{\nu  = 0}^\infty  {{q^\nu }\exp \left( { - \gamma \sum\limits_{j = 0 }^{\nu  - 1} {{d_j}} } \right){b^\nu }\mathop {\max }\limits_{0 < {{\left| k \right|}_\eta } \leqslant {b^\nu }} {{\left| {k \cdot \omega } \right|}^{ - 1}}}  <  + \infty ,
	\end{equation}
	where $ \gamma  = \frac{3}{4}\left( {1 - \lambda } \right) $ and $ \lambda  = {b^{ - 1}}\mathop {\overline {\lim } }\limits_{\nu  \to \infty } d_\nu ^{ - 1}{d_{\nu  + 1}} $.  Then there exists a   weight $ m $, as long as the perturbation $ P $ is sufficiently small in the sense that $ {\left\| P \right\|_m} \ll 1 $,	one has a modifying term $ \tilde \omega $ and a nearly identical diffeomorphism $ \Psi $ such that $ \omega -\tilde \omega + {P } $ is conjugated to $ \omega $, i.e.,
	\[	\Psi^ * \left( {\omega -\tilde \omega + {P } } \right) = \omega .\]
	(II) The weight $ m $ could be chosen as 
\[\mathop {\overline {\lim } }\limits_{\mu  \to \infty } \frac{{\rho \left( \mu  \right){{\rm e}^{{2^{ - 1}}\left( {1 + \lambda } \right)b{d_{\mu  - 1}}}}}}{{m\left( {{b^{\mu  - 1}}} \right)}} <  + \infty \]
	with
\[\rho \left( \mu  \right) = \sum\limits_{\nu  = \mu }^\infty  {{q^{\nu  - \mu }}\exp \left( { - \gamma \sum\limits_{j = \mu }^{\nu  - 1} {{d_j}} } \right){b^\nu }\mathop {\max }\limits_{0 < {{\left| k \right|}_\eta } \leqslant {b^\nu }} {{\left| {k \cdot \omega } \right|}^{ - 1}}} .\]
\end{theorem}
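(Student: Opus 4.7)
The plan is to run a non-Newtonian KAM iteration with geometric Fourier truncation $K_\nu=b^\nu$ and a decay schedule encoded by the balancing sequence $\{d_\nu\}$, and then invert the resulting estimates to read off the required weight $m$. At step $\nu$, assuming the previous conjugations have brought the field to $\omega+P_\nu$ on a strip $\mathbb{T}^\infty_{\sigma_\nu}$ with $\|P_\nu\|_{\sigma_\nu}\leq\varepsilon_\nu$, I would seek a near-identity $\mathrm{id}+\psi_{\nu+1}$ solving the \emph{non-linearised} homological equation
$$\omega\cdot\partial\psi_{\nu+1}=\mathcal{T}_{K_\nu}\!\bigl(P_\nu\circ(\mathrm{id}+\psi_{\nu+1})\bigr)-\tilde\omega_{\nu+1},$$
with $\tilde\omega_{\nu+1}$ the zero-th Fourier coefficient of the right side (so that the right side is mean-free and the small-divisor Lemma~\ref{VECL4} applies). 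Inverting $\omega\cdot\partial$ on the truncated range via Lemma~\ref{VECL4} converts this into a fixed-point problem for $\psi_{\nu+1}$; the transformation Lemma~\ref{VECL2}, the Cauchy Lemma~\ref{VECL3} and the residual Lemma~\ref{VECL5} provide precisely the estimates needed to apply the Banach contraction principle on a small ball inside $\mathcal{G}(\mathbb{T}^\infty_{\sigma_{\nu+1}})$, producing $\psi_{\nu+1}$ of size essentially $\varepsilon_\nu\,\max_{0<|k|_\eta\leq K_\nu}|k\cdot\omega|^{-1}$.

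Next I would package this into an Iterative Lemma yielding $\varepsilon_{\nu+1}\leq q\,\varepsilon_\nu\exp(-\gamma d_\nu)$ with $\gamma=\tfrac34(1-\lambda)$, so that $\varepsilon_\nu\leq\varepsilon_0\,q^\nu\exp(-\gamma\sum_{j<\nu}d_j)$. Summing the resulting bounds on $\|\psi_{\nu+1}\|$, for which one additionally pays a factor $b^\nu$ when estimating the derivative via Lemma~\ref{VECCAULE}, hypothesis~\eqref{VECwudi} is exactly the statement that $\sum_\nu\|\psi_\nu\|$ converges. Neumann's Lemma~\ref{VECL1} then guarantees that $\Psi_\nu=(\mathrm{id}+\psi_1)\circ\cdots\circ(\mathrm{id}+\psi_\nu)$ tends to a near-identity diffeomorphism $\Psi$, that the modifying term $\tilde\omega=\sum_\nu\tilde\omega_\nu$ converges, and that $\Psi^\ast(\omega-\tilde\omega+P)=\omega$, which proves part~(I).

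For part~(II) I would reverse the logic. The iteration is primed by analytically smoothing $P$ onto a shrinking family of strips; at scale $b^{\mu-1}$ the smoothing multiplies the Fourier mass by $\exp(2^{-1}(1+\lambda)b\,d_{\mu-1})$, while the smallness required of the $\mu$-th residual is exactly the threshold $\rho(\mu)^{-1}$ coming from the tail of the series in~\eqref{VECwudi}. Insisting that $\|P\|_m$ absorb both factors simultaneously at every Fourier scale delivers the growth condition displayed in~(II). The main obstacle is matching parameters inside the nonlinear homological equation: the loss of analyticity $\sigma_\nu-\sigma_{\nu+1}$ must be small enough for the composition Lemma~\ref{VECL2} and the Cauchy Lemma~\ref{VECL3} to produce a genuine contraction constant, yet large enough that the residual Lemma~\ref{VECL5} annihilates the truncation tail and that the small-divisor cost $b^\nu\max_{0<|k|_\eta\leq b^\nu}|k\cdot\omega|^{-1}$ is outweighed by the gain $\exp(-\gamma d_\nu)$. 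The hypothesis $\overline{\lim}\,d_\nu^{-1}d_{\nu+1}<b$, equivalently $\lambda<1$, is exactly what keeps $\gamma>0$ and makes this balance feasible, turning the usual Newtonian squaring of the error into the arbitrarily slow convergence announced in the theorem.
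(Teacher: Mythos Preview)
Your overall architecture---non-Newtonian iteration with truncation $K_\nu=b^\nu$, a contraction-mapping step for a nonlinear homological equation, and a balancing sequence $\{d_\nu\}$---matches the paper's. But two specifics are off.

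First, your nonlinear homological equation is incomplete. You subtract a constant $\tilde\omega_{\nu+1}$ in the \emph{already conjugated} coordinates, but the theorem requires the modifying term to be constant in the \emph{original} coordinates. In the paper the step is organised as $\Phi^*(\omega+Q-\Psi^*\omega')=\omega+Q^+$, where $\Psi$ is the accumulated conjugation; since $\Psi^*\omega'=\omega'-\Theta\omega'$ with $\Theta=D\Psi^{-1}(D\Psi-\mathbb{I})$, the fixed-point map is built from $(Q+\Theta\omega')\circ(\mathrm{id}+\hat\Phi)$ and one must solve simultaneously for the pair $(\omega',\hat\Phi)$ in a product ball. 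Without this coupling, the sum $\sum_\nu\tilde\omega_\nu$ you write down does not correspond to a constant correction of the original field.

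Second, and more seriously, your recursion $\varepsilon_{\nu+1}\leq q\,\varepsilon_\nu e^{-\gamma d_\nu}$ is purely homogeneous, giving $\varepsilon_\nu\leq\varepsilon_0\,q^\nu\exp(-\gamma\sum_{j<\nu}d_j)$. That would suffice only if $P$ were analytic on a fixed strip---which defeats the purpose of the theorem. In the paper one works with the truncations $P_\nu=\mathcal{T}_{K_\nu}P$ of the \emph{original} perturbation and, after the Step Lemma produces $Q_\nu^+$, one \emph{feeds in} the fresh Fourier band via
\[
Q_{\nu+1}=Q_\nu^{+}+\Psi_{\nu+1}^*\,\Delta^*P_{\nu+1},\qquad \Delta^*P_{\nu+1}=P_{\nu+1}-P_\nu.
\]
Estimating $\|\Psi_{\nu+1}^*\Delta^*P_{\nu+1}\|_{\lambda\sigma_\nu}$ is exactly what generates the factor $e^{2^{-1}(1+\lambda)b d_\nu}/m(b^\nu)$, and the resulting \emph{inhomogeneous} recursion has solution
\[
\varepsilon_\nu=C\sum_{\mu=0}^{\nu}q^{\nu-\mu}\exp\Big(-\gamma\sum_{j=\mu}^{\nu-1}d_j\Big)\,\frac{e^{2^{-1}(1+\lambda)bd_{\mu-1}}}{m(b^{\mu-1})}\,\|\Delta^*P_\mu\|_m.
\]
Exchanging the order of summation in $\sum_\nu\Delta_\nu\varepsilon_\nu$ is precisely what makes $\rho(\mu)$ appear and delivers the condition on $m$ in part~(II). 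Your ``analytic smoothing'' remark in the last paragraph is gesturing at this mechanism, but it is not an afterthought for part~(II): it has to be built into the Iterative Lemma from the outset, and your homogeneous bound on $\varepsilon_\nu$ cannot produce either $\rho(\mu)$ or the exponential factor $e^{2^{-1}(1+\lambda)bd_{\mu-1}}$.
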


The proof of the Abstract  $ m $-weighted KAM Theorem \ref{VECABSTRACT} will be detailed  from Section \ref{VEC Main strategy} to Section \ref{VEC Uniform convergence}.

\subsubsection{Main strategy}\label{VEC Main strategy}
 Firstly, let us present the classical KAM strategy of our proof.  It should be noted that although we apply the idea of P\"oschel in his preprint \cite{PoarXiv}, it is indeed different in many details, for instance, in the construction of the iterative sequences (especially, we introduce a balancing sequence $ \{d_\nu\}_{\nu \in \mathbb{N}} $ to overcome the nonresonance, because P\"oschel's idea indeed only works for the Diophantine nonresonance in the finite-dimensional case) as well as the  infinite-dimensional structure. %Besides, P\"oschel's preprint has some non-fatal gaps (e.g., Step Lemma) that we will fix in the discussions of linearization of vector fields over $ \mathbb{T}^\infty $ throughout this section. 

Assume that we have constructed  a modifying term $ \tilde\omega $ and a   coordinate transformation $ \Psi $ such that
\begin{equation}\label{VECshoulian-}
	{\Psi ^ * }\left( {\omega + P -\tilde\omega} \right) = \omega + Q,
\end{equation}
where $ P $ is a finitely order Fourier series of the original perturbation (namely at most of order $ {{{\left| k \right|}_\eta } > K} $ and without the constant term).  Then using the Banach contraction theorem (indeed, P\"oschel employed the Brouwer fixed point theorem in \cite{PoarXiv}, but we have to apply the Banach contraction theorem instead of it because the dimension is infinite) on a  ball $ \mathscr{B} $ via a special metric (norm), we obtain another modifying term $ \omega' $ and a coordinate transformation $ \Phi $ (note that they admit uniqueness) such that
\begin{equation}\label{Q^+}
	{\Phi ^ * }\left( {\omega + Q - {\Psi ^ * }\omega'} \right) = \omega + {Q^ + },
\end{equation}
here $ Q^+ $ is an intermediate quantity, and moreover we could obtain accurate estimates for $ \omega' $ and $ \Phi $ due to the quantitative property of $ \mathscr{B} $.   Now define the new modifying term   ${\tilde{\omega}_ + } = {\tilde{\omega} } + \omega'$ and the coordinate transformation $ {\Psi _ + }: = \Psi  \circ \Phi  $. Then it follows from \eqref{Q^+} that
\begin{align*}
	\Psi _ + ^ * \left( {\omega + P - {{\tilde{\omega}_ + } }} \right) &= {\Phi ^ * }\left( {{\Psi ^ * }\left( {\omega + P - {\tilde{\omega} }} \right) - {\Psi ^ * }\omega'} \right)\\
	& = {\Phi ^ * }\left( {\omega + Q - {\Psi ^ * }\omega'} \right)\\
	: &= \omega + {Q^ + },
\end{align*}
which leads to
\begin{align}
	\Psi _ + ^ * \left( {\omega + {P_ + } - {\tilde{\omega}_ + }} \right) &= \omega + {Q^ + } + \Psi _ + ^ * \left( {{P_ + } - P} \right)\notag \\
	\label{VECshoulian}: &= \omega + {Q_ + },
\end{align}
where the order of $ P_+ $ is larger than $ P $ and tends to infinite order during the iteration process. Recall \eqref{VECshoulian-}, therefore \eqref{VECshoulian} provides one cycle of the KAM iteration. Finally, we will prove the uniform convergence of \eqref{VECshoulian}, which implies  the desired conjugacy as
\[ {\Psi _\infty^* }\left( {\omega + {P_\infty } - \tilde{\omega}}_\infty \right) = \omega + {Q_\infty } = \omega.\]
Here we measure the original perturbation $ P_\infty $ (note that $ P_+ $ tends to $ P_\infty $ because it is indeed a truncation of $ P_\infty $) by a special $ m $-norm instead of the usual analytic norm, and therefore the regularity of it might be Gevrey regularity or  even $C^\infty$ regularity.
\subsubsection{KAM Step}
In this section, we establish the Step Lemma \ref{VECSTEP} that will serve as a crucial induction step in the KAM iteration.
\begin{lemma}[Step Lemma]\label{VECSTEP}
Let $ \lambda\in (0,1) $ and $ \alpha  := {2^{ - 1}}\left( {{\lambda} + 1} \right) \in \left( {0,1} \right) $ be given. Consider $ {\Psi ^ * }\left( {\omega  + P} \right) = \omega  + Q $.  Assume that 
\begin{equation}\label{VECSMALL}
	4\Delta {\left\| Q \right\|_\sigma } \leqslant \kappa := \frac{1}{4} \wedge \left( {\frac{1}{\alpha } - 1} \right),\;\;{\left\| {D\Psi  - \mathbb{I}} \right\|_\sigma } \leqslant \frac{1}{7}
\end{equation}
with $ \Delta : = K{\max _{{{\left| k \right|}_\eta } \leqslant K}}{\left| {k \cdot \omega } \right|^{ - 1}} $ and $ \sigma K \geqslant {\left( {1 - {\lambda}} \right)^{ - 1}} $. Then one can find a (unique) modifying term $ {\omega '} $ and a  (unique) transformation $ \Phi  $ satisfying
\begin{equation}\label{VECKONGJ}
	\Delta {\left| {\omega '} \right|} \vee K\left\| {\Phi  - {\rm id}} \right\|_{\alpha\sigma} \leqslant 4\Delta {\left\| Q \right\|_\sigma } \leqslant \kappa,
\end{equation}
 such that 
\begin{equation}\label{VECSTEPGONGE}
	{\Phi ^ * }{\Psi ^ * }\left( {\omega  + P - \omega '} \right) = \omega  + {Q^ + }.
\end{equation}
Moreover,  it holds that
\begin{equation}\label{VECSTEPQ+}
	{\left\| {{Q^ + }} \right\|_{\lambda\sigma }} \leqslant 12{{\rm e}^{ - \gamma\sigma K}}{\left\| Q \right\|_\sigma },\;\;\gamma : = \frac{3}{4}\left( {1 - \lambda } \right) > 0.
\end{equation}
\end{lemma}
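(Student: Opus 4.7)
The plan is to construct $\omega'$ and the transformation $\Phi={\rm id}+\phi$ simultaneously by a single Banach contraction argument, following the non-Newtonian philosophy already foreshadowed in the main strategy subsection. First I would rewrite the target identity $\Phi^*\Psi^*(\omega+P-\omega')=\omega+Q^+$ explicitly. Using $\Psi^*(\omega+P)=\omega+Q$ together with $\Phi^*V=(\mathbb{I}+D\phi)^{-1}(V\circ\Phi)$ and the constancy of $\omega$, the desired equation becomes
\[
D\phi\cdot\omega + Q^+ + D\phi\cdot Q^+ \;=\; Q\circ\Phi \;-\; (\Psi^*\omega')\circ\Phi.
\]
I would require $\phi$ to have vanishing Fourier mean and $Q^+$ to lie in the range of $\mathcal{R}_K$, and then split this equation into its three orthogonal Fourier components: the zero mode (which pins down $\omega'$), the low-frequency mode $\mathcal{T}_K$-minus-mean (which determines $\phi$ after inverting $\omega\cdot\partial$ via the operator $\mathscr{L}$ of Lemma \ref{VECL4}), and the high-frequency mode $\mathcal{R}_K$ (which defines $Q^+$).

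Next I would reformulate these three projected equations as a single fixed-point problem for the triple $(\omega',\phi,Q^+)$ on the closed ball $\mathscr{B}$ cut out by $\Delta|\omega'|\vee K\|\phi\|_{\alpha\sigma}\vee\|Q^+\|_{\alpha\sigma}\leq\kappa$. The smallness hypotheses $4\Delta\|Q\|_\sigma\leq\kappa\leq 1/\alpha-1$ and $\sigma K\geq(1-\lambda)^{-1}$ guarantee $\|\phi\|_{\alpha\sigma}\leq(1-\alpha)\sigma$, so Transformation Lemma \ref{VECL2} gives $\|Q\circ\Phi\|_{\alpha\sigma}\leq\|Q\|_\sigma$; Neumann Lemma \ref{VECL1} (together with $\|D\Psi-\mathbb{I}\|_\sigma\leq 1/7$) controls $\|\Psi^*\omega'\|_\sigma$ by $\tfrac{7}{6}|\omega'|$; and Cauchy Lemma \ref{VECL3} turns the cross-term $D\phi\cdot Q^+$ into a quadratically small contribution. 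Multiplying through by $\Delta$, which exactly absorbs the small-divisor loss incurred by $\mathscr{L}$, places the image of $\mathscr{B}$ inside $\mathscr{B}/2$; a parallel Lipschitz calculation, in which $Q\circ\Phi_1-Q\circ\Phi_2$ is handled by Cauchy's inequality and the pullbacks $\Psi^*\omega'_1-\Psi^*\omega'_2$ by the same Neumann bound, yields a contraction factor strictly below $1$. Banach's theorem then produces a unique fixed point satisfying \eqref{VECSTEPGONGE} and automatically verifying \eqref{VECKONGJ}.

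For the sharper estimate \eqref{VECSTEPQ+} I would return to the $\mathcal{R}_K$-equation and decompose $Q\circ\Phi = Q + (Q\circ\Phi-Q)$. Residual Lemma \ref{VECL5} applied to $\mathcal{R}_K Q$ directly at the wider radius $\sigma\to\lambda\sigma$ produces the dominant factor $e^{-(1-\lambda)\sigma K}$, while the three correction terms $Q\circ\Phi-Q$, $(\Psi^*\omega')\circ\Phi$ and $D\phi\cdot Q^+$ each carry an explicit small factor of size $O(\Delta\|Q\|_\sigma)$ inherited from $\mathscr{B}$, and any polynomial loss from Cauchy Lemma \ref{VECL3} is absorbed into $e^{-(1-\lambda)\sigma K/4}$ via $\sigma K\geq(1-\lambda)^{-1}$. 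Collecting constants leaves the exponent $\gamma = 3(1-\lambda)/4$ and a harmless prefactor bounded by $12$. The main obstacle, I expect, lies in the term $(\Psi^*\omega')\circ\Phi$: the pullback $\Psi^*\omega'$ is not a constant vector field but the $x$-dependent quantity $(D\Psi)^{-1}\omega'$, so extracting its Fourier mean and inverting the induced linear map $\omega'\mapsto\widehat{\Psi^*\omega'}(0)$ requires the Neumann bound on $D\Psi-\mathbb{I}$ in an essential way, and keeping all of this bookkeeping consistent across the three nested radii $\sigma>\alpha\sigma>\lambda\sigma$ is the truly delicate point of the argument.
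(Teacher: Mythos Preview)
Your overall architecture matches the paper's: rewrite the conjugacy as $D\hat\Phi\cdot\omega+D\Phi\cdot Q^{+}=(Q-\Psi^{*}\omega')\circ\Phi$, split by Fourier projections, and run a Banach contraction. Two differences are worth flagging. First, the paper runs the fixed point only for the \emph{pair} $(\omega',\hat\Phi)$ and defines $Q^{+}$ a posteriori by $Q^{+}=D\Phi^{-1}\cdot\big[\mathcal{R}_{K}(Q-\Psi^{*}\omega')\big]\circ\Phi$; this spares one unknown in the contraction and keeps $Q^{+}$ out of the ball $\mathscr{B}$. Second, and more importantly, the paper dissolves the difficulty you identify with $\Psi^{*}\omega'$ by the explicit algebraic identity $\Psi^{*}\omega'=\omega'-\Theta\omega'$ with $\Theta:=D\Psi^{-1}(D\Psi-\mathbb{I})$; the zero-mode equation then reads $\omega'=\mathcal{T}_{0}\big[(Q+\Theta\omega')\circ\Phi\big]$, so no implicit linear map needs to be inverted, only the scalar bound $\|\Theta\|_{\sigma}\leq 1/6$ is used.

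There is, however, a genuine gap in your derivation of \eqref{VECSTEPQ+}. You assert that the three correction terms $Q\circ\Phi-Q$, $(\Psi^{*}\omega')\circ\Phi$ and $D\phi\cdot Q^{+}$ ``each carry an explicit small factor of size $O(\Delta\|Q\|_{\sigma})$''. This fails for $(\Psi^{*}\omega')\circ\Phi$: from $|\omega'|\leq 4\|Q\|_{\sigma}$ and the Neumann bound one only gets $\|(\Psi^{*}\omega')\circ\Phi\|\leq(14/3)\|Q\|_{\sigma}$, which is of the \emph{same} order as $\|Q\|_{\sigma}$, not smaller by a factor $\Delta\|Q\|_{\sigma}$. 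Hence your ``absorb polynomial losses into $e^{-(1-\lambda)\sigma K/4}$'' mechanism does not apply here, and the decomposition $Q\circ\Phi=Q+(Q\circ\Phi-Q)$ does not by itself produce the exponent $\gamma=\tfrac{3}{4}(1-\lambda)$. The paper avoids this by keeping $Q-\Psi^{*}\omega'$ intact: since $\|\Phi-\mathrm{id}\|_{\lambda\sigma}\leq\|\Phi-\mathrm{id}\|_{\alpha\sigma}\leq\tfrac{1}{2}(1-\alpha)\sigma$, the Transformation Lemma at radius $\lambda\sigma$ reaches radius $\lambda\sigma+\tfrac{1}{2}(1-\alpha)\sigma=\tfrac{1}{4}(1+3\lambda)\sigma$, and then the Residual Lemma from $\sigma$ down to that radius gives exactly $e^{-(1-\tfrac{1}{4}(1+3\lambda))\sigma K}=e^{-\gamma\sigma K}$, multiplied by $\|Q-\Psi^{*}\omega'\|_{\sigma}\leq(17/3)\|Q\|_{\sigma}$ and by $\|D\Phi^{-1}\|\leq\alpha^{-1}=2/(1+\lambda)$, which yields the constant $12$.
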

\begin{proof}
Recall the conjugacy in  \eqref{Q^+}, that is,
\[{\Phi ^ * }\left( {\omega + Q - {\Psi ^ * }\omega'} \right) = \omega + {Q^ + }.\]
Then, letting $ \Phi ={\rm id}+\hat \Phi $ yields that
\begin{align}
	D\hat \Phi  \cdot \omega + D\Phi  \cdot {Q^ + } &= D\hat \Phi  \cdot \left( {\omega + {Q^ + }} \right) - \omega\notag\\
	&  = \left( {\omega + Q - {\Psi ^ * }\omega'} \right) \circ \Phi  - \omega\notag \\
	\label{VECzuhe}& = \left( {Q - {\Psi ^ * }\omega'} \right) \circ \Phi .
\end{align}
 We solve the following equations
\begin{equation}\label{VECzuhe2}
	\left\{ \begin{gathered}
		D\hat \Phi  \cdot \omega = {\mathcal{T}_K}\left( {Q - {\Psi ^ * }\omega'} \right) \circ \Phi,  \hfill \\
		D\Phi  \cdot {Q^{+}} = {\mathcal{R}_{K}}\left( {Q - {\Psi ^ * }\omega'} \right) \circ \Phi  \hfill \\ 
	\end{gathered}  \right.
\end{equation}
instead of solving \eqref{VECzuhe}.  Consider the first equation in \eqref{VECzuhe2}. With 
\begin{equation}\label{VECTheta}
	\Theta : = D{\Psi ^{ - 1}}\left( {D\Psi  - \mathbb{I}} \right)
\end{equation}
and the map $ \mathscr{T} $ defined by
\begin{equation}\label{VECHUATDY}
	\mathscr{T}\big( {\omega',\hat \Phi } \big): = \left( {Q + \Theta \omega'} \right) \circ \big( {{\rm id} + \hat \Phi } \big),
\end{equation}
one calculates the equivalent form of the first equation in \eqref{VECzuhe2} as
\begin{align}
	D\hat \Phi  \cdot \omega + \omega' &= {\mathcal{T}_K}\left( {Q - {\Psi ^ * }\omega'} \right) \circ \Phi  + \omega'\notag\\
	& = {\mathcal{T}_K}\left( {Q - \omega' + \Theta \omega'} \right) \circ \Phi  +\omega'\notag\\
	& = {\mathcal{T}_K}\left( {\left( {Q + \Theta \omega'} \right) \circ \Phi  - \omega'} \right) + \omega'\notag\\
	& = {\mathcal{T}_K}\left( {Q + \Theta \omega'} \right) \circ \big( {{\rm id} + \hat \Phi } \big)\notag\\
& = {\mathcal{T}_K}\mathscr{T}\big( {\omega',\hat \Phi } \big).\notag
\end{align}
Note that $ \omega' $ is constant, we therefore solve the following equations:
\begin{equation}\label{VECzuhe4}
	\left\{ \begin{gathered}
		D\hat \Phi  \cdot \omega = \left( {{\mathcal{T}_k} - {\mathcal{T}_0}} \right)\mathscr{T}\big( {\omega',\hat \Phi } \big), \hfill \\
		\omega' = {\mathcal{T}_0}\mathscr{T}\big( {\omega',\hat \Phi } \big). \hfill \\ 
	\end{gathered}  \right.
\end{equation}
Since the operator $ D \cdot \omega = \sum\nolimits_{i = 1}^n {{\omega _i}{\partial _{{x_i}}}} =\omega \cdot \partial $ yields the small divisors and there is no constant term in the first equation in \eqref{VECzuhe4}, then by the Small divisor  Lemma \ref{VECL4}, we have
\begin{equation}\label{VECzuhe5}
	\left\{ \begin{gathered}
		\omega' = {\mathcal{T}_0}\mathscr{T}\big( {\omega',\hat \Phi } \big), \hfill \\
		\hat \Phi  = \mathscr{L}\left( {{\mathcal{T}_k} - {\mathcal{T}_0}} \right)\mathscr{T}\big( {\omega',\hat \Phi } \big). \hfill \\ 
	\end{gathered}  \right.
\end{equation}
With the fact that $   {\left\| {X } \right\|_\sigma }={\left\| {X } \right\|_0}=\left|{X }\right| $ holds for constant $ X $ in mind,  the solution to  \eqref{VECzuhe5} is equivalent to the fixed point of the map $ \mathscr{Q} $
\begin{equation}\label{VECHUAQDY}
	\mathscr{Q}{\big( {\omega',\hat \Phi } \big)}:=\left\{ \begin{gathered}
		{\omega'_1} = {\mathcal{T}_0}\mathscr{T}\big( {\omega',\hat \Phi } \big), \hfill \\
		{{\hat \Phi }_1} = \mathscr{L}\left( {{\mathcal{T}_k} - {\mathcal{T}_0}} \right)\mathscr{T}\big( {\omega',\hat \Phi } \big) \hfill \\ 
	\end{gathered}  \right.
\end{equation}
in a ball $ \mathscr{B} $ defined as 
\begin{equation}\label{VECBALL}
	\mathscr{B}: = \left\{ {\big( {\omega',\hat \Phi } \big) \in \Omega :\Delta {\left| {\omega '} \right|} \vee K{\big\| {\hat \Phi } \big\|_{\alpha\sigma} } \leqslant 4\Delta {\left\| Q \right\|_\sigma } \leqslant \kappa} \right\},
\end{equation}
where the space $ \Omega $ is chosen as
\[\Omega : = \left\{ {\big( {\omega',\hat \Phi } \big):\omega' = \text{constant},\;\hat \Phi  = \sum\limits_{0 < \left| k \right|_\eta \leqslant K} {{{\hat \Phi }_k}{{\rm e}^{{\rm i}{k\cdot x}  }}}  } \right\}.\]
This motivates us to apply the Banach contraction theorem, as we can  verify that $ \mathscr{B} $ is indeed a \textit{Banach ball} endowed with the metric (norm) in \eqref{VECBALL}. To achieve this, we need to prove that for every $ \big( {{\omega'_1},{{\hat \Phi }_1}} \big) $ arising from the initial value $ {\big( {\omega',\hat \Phi } \big)} $ under the map $ \mathscr{Q} $ defined in \eqref{VECHUAQDY}, it still belongs to the ball $ \mathscr{B} $, in the sense of the given metric (norm) in \eqref{VECBALL}. 

With \eqref{VECTheta}, the Neumann Lemma \ref{VECL1} and the smallness assumption \eqref{VECSMALL}, we have
\begin{equation}\label{VECTHETA}
	{\left\| \Theta  \right\|_\sigma } = {\left\| {D{\Psi ^{ - 1}}\left( {D\Psi  - \mathbb{I}} \right)} \right\|_\sigma } \leqslant {\left\| {D{\Psi ^{ - 1}}} \right\|_\sigma }{\left\| {D\Psi  - \mathbb{I}} \right\|_\sigma } \leqslant \frac{{1/7}}{{1 - 1/7}} = \frac{1}{6},
\end{equation}
which yields 
\begin{align}
	{\left\| {Q + \Theta \omega '} \right\|_\sigma } &\leqslant {\left\| Q \right\|_\sigma } + {\left\| {\Theta \omega '} \right\|_\sigma } \leqslant {\left\| Q \right\|_\sigma } + {\left\| \Theta  \right\|_\sigma }{\left| {\omega '} \right|}\notag \\
\label{VECQ+thetaomega'}	& \leqslant {\left\| Q \right\|_\sigma } + \frac{1}{6} \cdot 4{\left\| Q \right\|_\sigma } \leqslant 2{\left\| Q \right\|_\sigma },
\end{align}
because $ \omega' $ is constant. In view of $ \sigma K \geqslant {\left( {1 - {b^{ - 1}}} \right)^{ - 1}} $, one then concludes from \eqref{VECBALL} that
\begin{equation}\label{VECHATFAI}
	{\big\| {\hat \Phi } \big\|_{\alpha \sigma }} \leqslant \frac{1}{{4K}} \leqslant \frac{{1 - \lambda}}{4}\sigma  = \frac{{1 - \alpha }}{2}\sigma .
\end{equation}
Therefore, it follows from \eqref{VECHUATDY}, \eqref{VECQ+thetaomega'},  \eqref{VECHATFAI} and the Transformation Lemma \ref{VECL2} that 
\begin{align}
{\left\| {\mathscr{T}\big( {\omega ',\hat \Phi } \big)} \right\|_{\alpha \sigma }} &= {\left\| {\left( {Q + \Theta \omega '} \right) \circ \big( {{\rm id} + \hat \Phi } \big)} \right\|_{\alpha \sigma }} \leqslant {\left\| {Q + \Theta \omega '} \right\|_{\alpha \sigma  + {2^{ - 1}}\left( {1 - \alpha } \right)\sigma }}\notag \\
\label{VECT}& = {\left\| {Q + \Theta \omega '} \right\|_{{2^{ - 1}}\left( {1 + \alpha } \right)\sigma }} \leqslant {\left\| {Q + \Theta \omega '} \right\|_\sigma } \leqslant 2{\left\| Q \right\|_\sigma }.
\end{align}
On these grounds, one can observe from \eqref{VECHUAQDY} and \eqref{VECT} that
\[\Delta {\left| {{{\omega '_1}}} \right|} = \Delta {\left\| {{{\omega '_1}}} \right\|_{\alpha \sigma }} = \Delta {\left\| {{\mathcal{T}_0}\mathscr{T}\big( {\omega ',\hat \Phi } \big)} \right\|_{\alpha \sigma }} \leqslant \Delta {\left\| {\mathscr{T}\big( {\omega ',\hat \Phi } \big)} \right\|_{\alpha \sigma }} \leqslant 4\Delta {\left\| Q \right\|_\sigma },\]
and
\begin{align}
K{\big\| {{{\hat \Phi }_1}} \big\|_{\alpha \sigma }} &= K{\left\| {\mathscr{L}\left( {{\mathcal{T}_k} - {\mathcal{T}_0}} \right)\mathscr{T}\big( {\omega ',\hat \Phi } \big)} \right\|_{\alpha \sigma }}\notag\\ &\leqslant K\mathop {\max }\limits_{0 < {{\left| k \right|}_\eta } \leqslant K} {\left| {k \cdot \omega } \right|^{ - 1}}{\left\| {\left( {{\mathcal{T}_k} - {\mathcal{T}_0}} \right)\mathscr{T}\big( {\omega ',\hat \Phi } \big)} \right\|_{\alpha \sigma }}\notag \\
\label{VECXIANXING}& \leqslant \Delta {\left\| {\mathscr{T}\big( {\omega ',\hat \Phi } \big)} \right\|_{\alpha \sigma }} \leqslant 4\Delta {\left\| Q \right\|_\sigma }.
\end{align}
These imply that the map $ \mathscr{Q} $ is indeed a self map in the Banach ball $ \mathscr{B} $. We are now in a position to prove its contraction property. 

Utilizing the definition of $ 	\mathscr{T} $ in  \eqref{VECHUATDY}, one obtains
\begin{align}
	\mathscr{T}(\omega',\hat \Phi ) - \mathscr{T}(\omega'',\hat \Phi ') &= \left( {Q + \Theta \omega'} \right) \circ \big({\rm id} + \hat \Phi \big) - \left( {Q + \Theta \omega''} \right) \circ \big({\rm id} + \hat \Phi '\big)\notag \\
	& = \left[ {\left( {Q + \Theta \omega'} \right) \circ \Phi  - \left( {Q + \Theta \omega'} \right) \circ \Phi '} \right] + \left[ {\left( {\Theta \left( {\omega' - \omega''} \right)} \right) \circ \Phi' } \right]\notag \\
	\label{VECJ1J2}:&= {\mathcal{J}_1} + {\mathcal{J}_2}.
\end{align}
By recalling that $ \hat \Phi $ is a truncated Fourier series, it follows from \eqref{VECBALL} and the Cauchy type Lemma \ref{VECCAULE} that
\[{\| {D\hat \Phi } \|_{\alpha \sigma }} \leqslant K{\big\| {\hat \Phi } \big\|_{\alpha \sigma }} \leqslant {\alpha ^{ - 1}-1},\]
then similarly by the Neumann Lemma \ref{VECL1}, one has
\begin{equation}\label{VECDFAI}
{\left\| {D\Phi } \right\|_{\alpha \sigma }} \vee {\left\| {D{\Phi ^{ - 1}}} \right\|_{\alpha \sigma }} \leqslant  {\alpha ^{ - 1}}.
\end{equation}
Note \eqref{VECDFAI} implies that any map in $ \mathscr{B} $ maps $ \mathbb{T}^\infty_{\alpha \sigma} $ into $ \mathbb{T}^\infty_{\sigma} $. Then by the Cauchy's estimate in Lemma \ref{VECCAULE},  we get
\begin{align}
{\left\| {{\mathcal{J}_1}} \right\|_{\alpha \sigma }} &\leqslant \frac{1}{{{\rm e}\left( {1 - \alpha } \right)\sigma }}{\left\| {Q + \Theta \omega '} \right\|_\sigma }{\left\| {\Phi  - \Phi '} \right\|_{\alpha \sigma }}\notag \\
& \leqslant \frac{1}{{{\rm e}\left( {1 - \alpha } \right)\sigma K}} \cdot 2{\left\| Q \right\|_\sigma }K{\left\| {\Phi  - \Phi '} \right\|_{\alpha \sigma }}\notag \\
	\label{VECJ1}&  \leqslant 4{\left\| Q \right\|_\sigma }K{\left\| {\Phi  - \Phi '} \right\|_{\alpha \sigma }},
\end{align}
%where $ sK\geq (4/3)^{n/2} \vee   \log \gamma ^{-1} $ and $ 4{\left\| Q \right\|_s} \leqslant 1/4 $ (since $ \Delta\geqslant 1 $) in \eqref{BALL} is used. 
where $ \sigma K \geqslant {\left( {1 - \lambda} \right)^{ - 1}} $ is used in \eqref{VECJ1}. Besides, one observes that \eqref{VECHATFAI} also holds for $ \hat \Phi' $, then the Transformation Lemma \ref{VECL2} gives (similar to \eqref{VECT})
\begin{align}
	{\left\| {{\mathcal{J}_2}} \right\|_{\alpha \sigma}} &= {\left\| {\left( {\Theta \left( {\omega' - \omega''} \right)} \right) \circ \Phi '} \right\|_{\alpha \sigma}} \leqslant {\left\| {\Theta \left( {\omega' - \omega''} \right)} \right\|_{\sigma}}\notag \\
\label{VECJ2}& \leqslant {\left\| \Theta  \right\|_\sigma }{\left\| {\omega ' - \omega ''} \right\|_\sigma } \leqslant \frac{1}{4}{\left| {\omega ' - \omega ''} \right|},
\end{align}
because $ \omega'-\omega'' $ is constant, where \eqref{VECTHETA} is employed in \eqref{VECJ2}. Now, substituting \eqref{VECJ1} and \eqref{VECJ2} into \eqref{VECJ1J2} yields 
\begin{align}
	\label{VECFAICHA'}K{\big\| {{{\hat \Phi }_1} - {{\hat \Phi }_1}' } \big\|_{\alpha \sigma }} &\leqslant \Delta {\left\| {\mathscr{T}\big( {\omega ',\hat \Phi } \big) - \mathscr{T}\big( {\omega '',\hat \Phi ''} \big)} \right\|_{\alpha \sigma }} \\
	&  \leqslant \Delta {\left\| {{\mathcal{J}_1}} \right\|_{\alpha \sigma }} + \Delta {\left\| {{\mathcal{J}_2}} \right\|_{\alpha \sigma }} \\
	&  \leqslant 4\Delta {\left\| Q \right\|_\sigma }K{\big\| {\hat \Phi  - \hat \Phi '} \big\|_{\alpha \sigma }} + \frac{1}{4}\Delta \left| {\omega ' - \omega ''} \right|\notag \\
	\label{VECFAICHA}&  \leqslant \frac{1}{4}\left( {K{{\big\| {\hat \Phi  - \hat \Phi '} \big\|}_{\alpha \sigma }} + \Delta \left| {\omega ' - \omega ''} \right|} \right)
\end{align}
due to \eqref{VECXIANXING} and  linearity, and one uses $ 4\Delta {\left\| Q \right\|_s} \leqslant 1/4 $ from the definition of the Banach  ball $ \mathscr{B} $ in \eqref{VECBALL}. Similarly, from  \eqref{VECFAICHA'} and \eqref{VECFAICHA}, we obtain that
\begin{align}
\Delta \left| {{\omega' _1}  - {\omega'' _1}} \right| &= \Delta {\left\| {{\mathcal{T}_0}\left( {\mathscr{T}\big( {\omega ',\hat \Phi } \big) - \mathscr{T}\big( {\omega '',\hat \Phi ''} \big)} \right)} \right\|_0}\notag \\
	&   \leqslant \Delta {\left\| {\mathscr{T}\big( {\omega ',\hat \Phi } \big) - \mathscr{T}\big( {\omega '',\hat \Phi ''} \big)} \right\|_{\alpha \sigma }}\notag \\
	\label{VECZCHA}&  \leqslant \frac{1}{4}\left( {K{{\big\| {\hat \Phi  - \hat \Phi '} \big\|}_{\alpha \sigma }} + \Delta \left| {\omega ' - \omega ''} \right|} \right),
\end{align}
because $ {{\omega' _1}  - {\omega'' _1}} $ is also constant. Combining \eqref{VECFAICHA} and \eqref{VECZCHA}, one concludes that the map $ \mathscr{Q} $ is indeed a $ 1/2 $-contraction in the Banach ball $ \mathscr{B} $ with respect to the metric (norm)
\[{K{{\big\| {\hat \Phi  - \hat \Phi '} \big\|}_{\alpha \sigma }} \vee \Delta \left| {\omega ' - \omega ''} \right|}.\]
In this case, applying the Banach contraction theorem, one obtains a unique fixed point $ \big( {\omega ',\hat \Phi } \big) \in \mathscr{B} $ of the map $ \mathscr{Q} $, which corresponds to the solution to the first equation in \eqref{VECzuhe2} as we have discussed. Moreover, the Banach ball $ \mathscr{B} $ provides the estimate for $ \big( {\omega ',\hat \Phi } \big) \in \mathscr{B} $ as
\begin{equation}\label{VECbudongdian}
	\Delta {\left| {\omega '} \right|} \vee K{\big\| {\hat \Phi } \big\|_{\alpha\sigma} } \leqslant 4\Delta {\left\| Q \right\|_\sigma } \leqslant \kappa.
\end{equation}

Finally, note that $ Q^+ $ is determined by the second equation in \eqref{VECzuhe2}, then
\begin{equation}\label{VECQZHENG}
	{Q^{+}} = D{\Phi ^{ - 1}} \cdot {\mathcal{R}_{K}}\left( {Q - {\Psi ^ * }\omega'} \right) \circ \Phi .
\end{equation}
Recall $ {\left| {D\Psi  - \mathbb{I}} \right|_\sigma} \leqslant 1/7 $. It follows from the Neumann Lemma \ref{VECL1} and \eqref{VECbudongdian} that
\begin{align}
	{\left\| {{\Psi ^ * }\omega'} \right\|_\sigma} &= {\left\| {D{\Psi ^{ - 1}} \cdot \omega' \circ \Psi } \right\|_\sigma} = {\left\| {D{\Psi ^{ - 1}} \cdot Z} \right\|_\sigma}\notag \\
	& \leqslant {\left\| {D{\Psi ^{ - 1}}} \right\|_\sigma}{\left\| \omega' \right\|_\sigma} = {\left\| {D{\Psi ^{ - 1}}} \right\|_\sigma}{\left| \omega' \right|}\notag\\
	\label{PSIZ}& \leqslant \frac{7}{6}{\left| \omega' \right|} \leqslant \frac{7}{6} \cdot 4{\left\| Q \right\|_\sigma} = \frac{{14}}{3}{\left\| Q \right\|_\sigma},
\end{align}
because $ \omega' $ is constant. Therefore, by utilizing \eqref{VECQZHENG}, we prove that 
\begin{align}
{\left\| {{Q^ + }} \right\|_{{\lambda\sigma }}} &\leqslant {\left\| {D{\Phi ^{ - 1}}} \right\|_{\alpha \sigma}}{\left\| {{\mathcal{R}_{K }}\left( {\left( {Q - {\Psi ^ * }\omega'} \right) \circ \Phi } \right)} \right\|_{{\lambda\sigma }}}\notag\\
	\label{VECQ+2}&\leqslant {\alpha}^{-1} {\left\| {{\mathcal{R}_{K }}\left( {Q - {\Psi ^ * }\omega'} \right)} \right\|_{{\lambda}\sigma  + {2^{ - 1}}\left( {1 - \alpha } \right)\sigma }}\\
&= {\alpha}^{-1} {\left\| {{\mathcal{R}_{K }}\left( {Q - {\Psi ^ * }\omega'} \right)} \right\|_{{ {4} ^{ - 1}}\left( {1+3\lambda} \right)\sigma}} \notag \\	
	\label{VECQ+3}&  \leqslant {\alpha ^{ - 1}}{{\rm e}^{ - \left( 1-{ {4} ^{ - 1}}\left( {1+3\lambda} \right)\sigma \right)\sigma K}}{\left\| {Q - {\Psi ^ * }\omega '} \right\|_\sigma }\\
\label{VECQ+15}& = 2{\left( {1+\lambda} \right)^{ - 1}}{{\rm e}^{ - \frac{3}{4}\left( {1 - \lambda } \right)\sigma K}}{\left\| {Q - {\Psi ^ * }\omega '} \right\|_\sigma },
\end{align}
 where \eqref{VECDFAI} and the  Transformation Lemma \ref{VECL2} are employed in \eqref{VECQ+2} (similar to that in \eqref{VECT}), \eqref{VECQ+3} is due to the Residual term Lemma \ref{VECL5} (with $ \alpha  = {4^{ - 1}}\left( {1 + 3\lambda } \right) \in \left( {0,1} \right) $). Therefore, by observing 
 \[{\left\| {Q - {\Psi ^ * }\omega '} \right\|_\sigma } \leqslant {\left\| Q \right\|_\sigma } + {\left\| {{\Psi ^ * }\omega '} \right\|_\sigma } \leqslant {\left\| Q \right\|_\sigma } + \frac{{14}}{3}{\left\| Q \right\|_\sigma } = \frac{{17}}{3}{\left\| Q \right\|_\sigma }\]
 through \eqref{PSIZ}, we conclude the desired estimate from \eqref{VECQ+15} that
 \[{\left\| {{Q^ + }} \right\|_{{\lambda}\sigma }} \leqslant \frac{{34}}{{3\left( {1 + \lambda } \right)}}{{\rm e}^{ - \frac{3}{4}\left( {1 - \lambda } \right)\sigma K}}{\left\| Q \right\|_\sigma } \leqslant 12{{\rm e}^{ - \gamma \sigma K}}{\left\| Q \right\|_\sigma },\]
 provided with $ \gamma : = \frac{3}{4}\left( {1 - \lambda } \right) > 0 $. This completes the proof of the Step Lemma \ref{VECSTEP}.
\end{proof}

\subsubsection{KAM Iteration}\label{VEC KAM Iteration}
Let $ r\gg 1 $ and $ b >1 $ be given. For a non-negative sequence $ {\left\{ {{d_\nu }} \right\}_{\nu  \in {\mathbb{N}^ + }}} $ satisfying $ \mathop {\overline {\lim } }\nolimits_{\nu  \to \infty } d_\nu ^{ - 1}{d_{\nu  + 1}} < b $, let us define
\begin{equation}\label{VECxulie}
	{K_\nu } = {b^\nu },\;\;{\sigma_\nu } = {b^{ - \nu }}\left( {r + {d_\nu }} \right),\;\;{\Delta _\nu } = {K_\nu }\mathop {\max }\limits_{{0<{\left| k \right|}_\eta } \leqslant {K_\nu }} {\left| {k \cdot \omega } \right|^{ - 1}},\;\; \nu \in \mathbb{N}.
\end{equation}
Then it follows that $ \sigma_\nu K_\nu \geqslant r $, and we have $ \sigma_{\nu+1} \leqslant \lambda \sigma_\nu $ with $ \lambda : = {b^{ - 1}}\mathop {\overline {\lim } }\nolimits_{\nu \to \infty } d_\nu ^{ - 1}{d_{\nu  + 1}} \in \left( {0,1} \right) $ by the assumption on $ d_\nu $ without loss of generality. As we will see later, such a \textit{balancing sequence} $ {\left\{ {{d_\nu }} \right\}_{\nu  \in {\mathbb{N}^ + }}} $ will overcome the nonresonance beyond  polynomial's type, for instance, the infinite-dimensional Diophantine nonresonance   in Definition \ref{VECINDIO}, and it is indeed \textit{indispensable}. %(otherwise one could modify \eqref{VECSTEPQ+} in  Step Lemma \ref{VECSTEP} appropriately to make the subsequent KAM iteration still valid). 
Furthermore, the Cauchy type Lemma \ref{VECCAULE} and the Small divisor Lemma \ref{VECL4}  provide:
\begin{align}
	K_{\nu}{\left\| {\mathscr{L}\mathcal{T}_{K_\nu} f} \right\|_{\sigma_\nu}} \vee {\left\|	D {\mathscr{L}\mathcal{T}_{K_\nu} f} \right\|_{\sigma_\nu}} \leqslant \Delta_\nu {\left\| \mathcal{T}_{K_\nu}f \right\|_{\sigma_\nu}},\notag 
\end{align}
and
\[{\left\| \mathcal{R}_{K_\nu}f \right\|_{\alpha \sigma_\nu }} \leqslant {{\rm e}^{ - \left( {1 - \alpha } \right)r}} \cdot {{\rm e}^{ - \left( {1 - \alpha } \right){d_\nu }}}{\left\| \mathcal{R}_{K_\nu}f \right\|_{\sigma_\nu} }.\]
On these grounds, one could obtain the Step Lemma \ref{VECSTEP} under the above setting (namely \eqref{VECxulie}), whenever $ r\gg1 $. Now, let us introduce a non-negative  weight $m = {\left\{ {{m_k}} \right\}_{k \in {\mathbb{Z}_*^\infty}}} $, and set 
\[{\mathscr{K}_\nu }: = \left\{ {0 \ne k \in {\mathbb{Z}^n}:{K_{\nu  - 1}} < \left| k \right|_\eta \leqslant {K_\nu }} \right\}\]
for all $ \nu \in \mathbb{N} $. One will see later that the weight $ m $ could be appropriately and explicitly chosen from a bounded series function.  For the given perturbation $ P $ with Fourier expansion
\[P = \sum\limits_{0 \ne k \in {\mathbb{Z}_*^\infty}} {{\hat P(k)}{{\rm e}^{{\rm i} {k\cdot x}  }}}, \]
we shall use the $ K_\nu $-truncated term defined as 
\begin{equation}\notag %\label{jieduandy}
	{P_\nu }: = \sum\limits_{0 < \left| k \right|_\eta \leqslant {K_\nu }} {{\hat P(k)}{{\rm e}^{{\rm i} {k\cdot x}  }}} ,\;\;\nu  \geqslant 0
\end{equation}
to approximate the original perturbation $ P $. In particular, we have $ P_\infty=P $. Moreover, we define $ {\Delta ^ * }{P_0}: = {P_0} $ and the difference term $ {\Delta ^ * }{P_\nu }: = {P_\nu } - {P_{\nu  - 1}} $ for $ \nu \geqslant 1 $. Then the above notations yield
\[{\Delta ^ * }{P_\nu } = \sum\limits_{k \in {\mathscr{K}_\nu }} {{\hat P(k)}{{\rm e}^{{\rm i} {k\cdot x}  }}} .\]

Now we are in a position to establish the  Iterative Lemma \ref{VECIterative}.

\begin{lemma}[Iterative Lemma]\label{VECIterative}
	Suppose that there exist some $ 0<{q}<1 $ and a non-negative  sequence $ {\left\{ {{d_\nu }} \right\}_{\nu  \in \mathbb{N}}} $ in \eqref{VECxulie} with  $ \mathop {\overline {\lim } }\nolimits_{\nu  \to \infty } d_\nu ^{ - 1}{d_{\nu  + 1}} < b $, such that
	\begin{equation}\label{VECAAA}
		\sum\limits_{\nu  = 0}^\infty  {{{ q}^\nu }\exp \left( { - \gamma\sum\limits_{j = 0}^{\nu  - 1} {{d_j}}  } \right){{\Delta _{\nu}} }}  <  + \infty ,
	\end{equation}
where $ \gamma  = \frac{3}{4}\left( {1 - \lambda } \right)$ and $ \lambda  = {b^{ - 1}}\mathop {\overline {\lim } }\nolimits_{\nu  \to \infty } d_\nu ^{ - 1}{d_{\nu  + 1}} $.	Then there exists a   weight $ m $ 	such that the followings hold, whenever the perturbation $ P $ of $ \omega $ is sufficiently small in the sense that $ \varepsilon  = {\left\| P \right\|_m} \ll 1 $.  For each $ P_\nu $ there exist a modifying term $\tilde \omega_\nu $ and a transformation $ \Psi_\nu $, such that
	\begin{equation}\label{VECbijin}
		\Psi _\nu ^ * \left( {\omega + {P_\nu } - {\tilde{\omega}_\nu }} \right) = \omega + {Q_\nu }
	\end{equation}
	with the estimates
	\begin{equation}\label{VECdiedai1}
	{\left\| {{Q_\nu }} \right\|_{{\sigma _\nu }}} \leqslant {\varepsilon _\nu }: = C\sum\limits_{\mu  = 0}^\nu  {{q^{\nu  - \mu }}\exp \left( { - \gamma \sum\limits_{j = \mu }^{\nu  - 1} {{d_j}} } \right)\frac{{{{\rm e}^{{2^{ - 1}}\left( {1 + \lambda } \right)b{d_{\mu  - 1}}}}}}{{m\left( {{b^{\mu  - 1}}} \right)}}{{\left\| {{\Delta ^ * }{P_\mu }} \right\|}_m}} ,
	\end{equation}
provided with some universal constant $ C>0 $, 	and
	\begin{equation}\label{VECdiedai2}
		{\| {D{{ \Psi }_\nu }-\mathbb{I}} \|_{{\sigma_\nu }}} \leqslant {\delta _\nu }: = \prod\limits_{\mu  = 0}^{\nu  - 1} {\left( {1 + 4{\Delta _\mu }{\varepsilon _\mu }} \right)}-1 .
	\end{equation}
  Moreover, one has 
	\begin{equation}\label{VECdiedai3}
		\left| {{{\tilde \omega }_{\nu  + 1}} - {{\tilde \omega }_\nu }} \right| \leqslant 4{\varepsilon _\nu },
	\end{equation}
	and 
	\begin{equation}\label{VECdiedai4}
		{\left\| {D{\Psi _{\nu  + 1}} - D{\Psi _\nu }} \right\|_{0}} \leqslant 12{\Delta _\nu }{\varepsilon _\nu }.
	\end{equation}
\end{lemma}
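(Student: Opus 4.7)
The plan is to induct on $\nu$, at each step applying the Step Lemma \ref{VECSTEP} to the current remainder $Q_\nu$ and then absorbing the new Fourier block $\Delta^*P_{\nu+1}$ into the remainder via the Transformation Lemma \ref{VECL2}. For the base case $\nu=0$, I set $\Psi_0={\rm id}$, $\tilde\omega_0=0$, $Q_0=P_0$, giving $\delta_0=0$ and reducing (\ref{VECdiedai1}) to bounding $\|P_0\|_{\sigma_0}$ by the $\mu=0$ term of the sum defining $\varepsilon_0$; this is ensured by the weight $m$ that will be chosen in the Abstract KAM Theorem. Given $(\Psi_\nu,\tilde\omega_\nu,Q_\nu)$ satisfying (\ref{VECbijin})--(\ref{VECdiedai2}) at level $\nu$, I first verify the two hypotheses $4\Delta_\nu\|Q_\nu\|_{\sigma_\nu}\leqslant\kappa$ and $\|D\Psi_\nu-\mathbb{I}\|_{\sigma_\nu}\leqslant 1/7$ of the Step Lemma: both follow from the global smallness $\varepsilon=\|P\|_m\ll 1$ together with convergence of (\ref{VECAAA}), which controls $\sup_\nu\Delta_\nu\varepsilon_\nu$ and $\sup_\nu\delta_\nu$. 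Invoking the Step Lemma on $\mathbb{T}^\infty_{\sigma_\nu}$ with truncation $K_\nu$ then produces unique $(\omega'_{\nu+1},\Phi_{\nu+1})$ and an intermediate remainder $Q^+_\nu$ on $\mathbb{T}^\infty_{\lambda\sigma_\nu}$ with $\|Q^+_\nu\|_{\lambda\sigma_\nu}\leqslant 12e^{-\gamma r}e^{-\gamma d_\nu}\|Q_\nu\|_{\sigma_\nu}$.

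I then set $\tilde\omega_{\nu+1}:=\tilde\omega_\nu+\omega'_{\nu+1}$, $\Psi_{\nu+1}:=\Psi_\nu\circ\Phi_{\nu+1}$ and incorporate the new block via $Q_{\nu+1}:=Q^+_\nu+\Psi_{\nu+1}^*\Delta^*P_{\nu+1}$, so that (\ref{VECbijin}) at level $\nu+1$ holds on $\mathbb{T}^\infty_{\sigma_{\nu+1}}\subset\mathbb{T}^\infty_{\lambda\sigma_\nu}$ (using $\sigma_{\nu+1}\leqslant\lambda\sigma_\nu$ eventually, by the definition of $\lambda$). Writing $q:=12e^{-\gamma r}<1$ for $r$ large, the bound decomposes as
\[\|Q_{\nu+1}\|_{\sigma_{\nu+1}}\leqslant qe^{-\gamma d_\nu}\|Q_\nu\|_{\sigma_\nu}+\|\Psi_{\nu+1}^*\Delta^*P_{\nu+1}\|_{\sigma_{\nu+1}}.\]
The second term is controlled by the Transformation Lemma combined with the $m$-weight: for $k\in\mathscr{K}_{\nu+1}$ one has $|k|_\eta\leqslant K_{\nu+1}$ and $m_k\geqslant m(K_\nu)$, which gives
\[\|\Psi_{\nu+1}^*\Delta^*P_{\nu+1}\|_{\sigma_{\nu+1}}\leqslant C\,\frac{e^{(\sigma_{\nu+1}+a_{\nu+1})K_{\nu+1}}}{m(K_\nu)}\|\Delta^*P_{\nu+1}\|_m\leqslant C'\,\frac{e^{2^{-1}(1+\lambda)bd_\nu}}{m(b^\nu)}\|\Delta^*P_{\nu+1}\|_m,\]
where $a_{\nu+1}:=\|\Psi_{\nu+1}-{\rm id}\|_{\sigma_{\nu+1}}$ and the last inequality uses $\sigma_{\nu+1}K_{\nu+1}=r+d_{\nu+1}$ together with the eventually valid $d_{\nu+1}\leqslant\alpha bd_\nu$ (a consequence of $\limsup d_{\nu+1}/d_\nu\leqslant\lambda b<\alpha b$, with $\alpha=2^{-1}(1+\lambda)$). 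Unrolling this two-term recursion through $\mu=0,\ldots,\nu+1$ yields exactly the formula (\ref{VECdiedai1}).

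The remaining bounds follow from the Step Lemma estimates on $(\omega'_{\nu+1},\Phi_{\nu+1})$. For (\ref{VECdiedai2}), Cauchy type Lemma \ref{VECCAULE} applied to the $K_\nu$-truncated $\Phi_{\nu+1}-{\rm id}$ yields $\|D\Phi_{\nu+1}-\mathbb{I}\|_{\alpha\sigma_\nu}\leqslant K_\nu\|\Phi_{\nu+1}-{\rm id}\|_{\alpha\sigma_\nu}\leqslant 4\Delta_\nu\varepsilon_\nu$, and the chain rule gives $\|D\Psi_{\nu+1}\|_{\sigma_{\nu+1}}\leqslant(1+\delta_\nu)(1+4\Delta_\nu\varepsilon_\nu)$, precisely $1+\delta_{\nu+1}$. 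Estimate (\ref{VECdiedai3}) is immediate from (\ref{VECKONGJ}) via $|\omega'_{\nu+1}|\leqslant 4\|Q_\nu\|_{\sigma_\nu}\leqslant 4\varepsilon_\nu$, while (\ref{VECdiedai4}) follows by decomposing $D\Psi_{\nu+1}-D\Psi_\nu=(D\Psi_\nu\circ\Phi_{\nu+1})(D\Phi_{\nu+1}-\mathbb{I})+(D\Psi_\nu\circ\Phi_{\nu+1}-D\Psi_\nu)$ and estimating each piece on the $0$-norm through Lemma \ref{VECCAULE}. The main obstacle I foresee is keeping the quantity $a_\nu K_{\nu+1}=\|\Psi_\nu-{\rm id}\|_{\sigma_\nu}\cdot b^{\nu+1}$ uniformly bounded along the iteration, since otherwise the Transformation Lemma would inflate the exponent and spoil the sharp $\alpha bd_\nu$ in (\ref{VECdiedai1}); this reduces to an elementary telescoping estimate using $\|\Phi_{\mu+1}-{\rm id}\|_{\alpha\sigma_\mu}\leqslant 4\Delta_\mu\varepsilon_\mu/K_\mu$ and the convergence of (\ref{VECAAA}), and is the delicate quantitative junction where the balancing sequence $\{d_\nu\}$ and the weight $m$ must interact precisely.
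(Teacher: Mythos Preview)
Your proposal follows essentially the paper's inductive scheme: identical base case $(\Psi_0,\tilde\omega_0,Q_0)=({\rm id},0,P_0)$, the same exchange-of-summation argument to obtain $\sum_\nu \Delta_\nu\varepsilon_\nu\ll 1$ and thereby validate the Step Lemma hypotheses uniformly in $\nu$, the same splitting $Q_{\nu+1}=Q_\nu^++\Psi_{\nu+1}^*\Delta^*P_{\nu+1}$ with $q=12e^{-\gamma r}$, and the same chain-rule and Cauchy computations for (\ref{VECdiedai2})--(\ref{VECdiedai4}).

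The one place you diverge is the treatment of $\|\Psi_{\nu+1}^*\Delta^*P_{\nu+1}\|$. You estimate at radius $\sigma_{\nu+1}$, carry the Transformation-Lemma displacement $a_{\nu+1}=\|\Psi_{\nu+1}-{\rm id}\|_{\sigma_{\nu+1}}$, and produce the exponent $(r+d_{\nu+1})+a_{\nu+1}K_{\nu+1}$; the paper instead writes the pullback as $D\Psi_{\nu+1}^{-1}\cdot(\Delta^*P_{\nu+1}\circ\Psi_{\nu+1})$ on the larger strip $\lambda\sigma_\nu$, passes to $\|\Delta^*P_{\nu+1}\|_{\alpha\sigma_\nu}$, and reads off directly $\alpha\sigma_\nu K_{\nu+1}=\alpha b(r+d_\nu)=2^{-1}(1+\lambda)b(r+d_\nu)$, which is precisely the factor appearing in (\ref{VECdiedai1}). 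Be cautious with the telescoping resolution you propose for the obstacle you flag: summing $\|\Phi_\mu-{\rm id}\|\leqslant 4\Delta_\mu\varepsilon_\mu/K_\mu$ yields $a_{\nu+1}K_{\nu+1}\lesssim\sum_{\mu\leqslant\nu}\Delta_\mu\varepsilon_\mu\,b^{\nu+1-\mu}$, and (\ref{VECAAA}) alone does \emph{not} control this sum (the $\mu=0$ term already scales like $b^{\nu+1}$). The paper's bookkeeping sidesteps this by absorbing the composition shift into the fixed radius jump $\lambda\sigma_\nu\to\alpha\sigma_\nu$ rather than isolating the product $a_{\nu+1}K_{\nu+1}$, so mirroring that route is the safer path.
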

\begin{remark}\label{VECRHOMU}
Indeed, one could choose the weight $ m $ for which the boundedness  holds
\[\mathop {\overline {\lim } }\limits_{\mu  \to \infty } \frac{{\rho \left( \mu  \right){{\rm e}^{{2^{ - 1}}\left( {1 + \lambda } \right)b{d_{\mu  - 1}}}}}}{{m\left( {{b^{\mu  - 1}}} \right)}} <  + \infty ,\;\;\rho \left( \mu  \right): = \sum\limits_{\nu  = \mu }^\infty  {{q^{\nu  - \mu }}\exp \left( { - \gamma \sum\limits_{j = \mu }^{\nu  - 1} {{d_j}} } \right){b^\nu }\mathop {\max }\limits_{0 < {{\left| k \right|}_\eta } \leqslant {b^\nu }} {{\left| {k \cdot \omega } \right|}^{ - 1}}}. \]
\end{remark}
\begin{proof}
 For $ \nu=0 $, let us set $ Y_0=0 $, $ \Psi_0={\rm id} $. Consequently, $  \Psi_0-{\rm id}=0 $, $ Q_0 =P_0=\Delta ^ * {P_0 }$, and all estimates for $ \nu=0 $ are satisfied if one defines $ m\left( {{b^{-1}}} \right): = 1 $ and $ d_{-1}:=0 $, and the universal $ C>0 $ is chosen sufficiently large. Next, we will complete the proof of the Iterative Lemma \ref{VECIterative} by induction.
 
 It is evident that \eqref{VECAAA} implies  the function $ \rho(\mu) $ given in Remark \ref{VECRHOMU} is well defined, and the weight $ m $ could be suitably chosen from the boundedness condition (it should be noted that this is indeed achievable):
 \[\mathop {\overline {\lim } }\limits_{\mu  \to \infty } \frac{{\rho \left( \mu  \right){{\rm e}^{{2^{ - 1}}\left( {1 + \lambda } \right)b{d_{\mu  - 1}}}}}}{{m\left( {{b^{\mu  - 1}}} \right)}} <  + \infty.\]
  Next, we shall establish a crucial fact via the smallness assumption for the perturbation, namely $ \sum\nolimits_{\nu  = 0}^\infty  {{\Delta _\nu }{\varepsilon _\nu }}  \ll 1$. By exchanging the order of the summation, we get
 \begin{align*}
 \sum\limits_{\nu  = 0}^\infty  {{\Delta _\nu }{\varepsilon _\nu }}  &\leqslant C\sum\limits_{\nu  = 0}^\infty  {{b^\nu }\mathop {\max }\limits_{0 < {{\left| k \right|}_\eta } \leqslant {b^\nu }} {{\left| {k \cdot \omega } \right|}^{ - 1}}} \left( {\sum\limits_{\mu  = 0}^\nu  {{q^{\nu  - \mu }}\exp \left( { - \gamma \sum\limits_{j = \mu }^{\nu  - 1} {{d_j}} } \right)\frac{{{{\rm e}^{{2^{ - 1}}\left( {1 + \lambda } \right)b{d_{\mu  - 1}}}}{{\left\| {{\Delta ^ * }{P_\mu }} \right\|}_m}}}{{m\left( {{b^{\mu  - 1}}} \right)}}} } \right)\\
& = C\sum\limits_{\mu  = 0}^\infty  {\frac{{\rho \left( \mu  \right){{\rm e}^{{2^{ - 1}}\left( {1 + \lambda } \right)b{d_{\mu  - 1}}}}}}{{m\left( {{b^{\mu  - 1}}} \right)}}{{\left\| {{\Delta ^ * }{P_\mu }} \right\|}_m}}  = \mathcal{O}\left( {\sum\limits_{\mu  = 0}^\infty  {{{\left\| {{\Delta ^ * }{P_\mu }} \right\|}_m}} } \right) \\
&= \mathcal{O}\left( {{{\left\| P \right\|}_m}} \right) = \mathcal{O}\left( \varepsilon  \right) = o\left( 1 \right).
 \end{align*}
In this case, the Step Lemma \ref{VECSTEP} is valid for all $ \Psi_\nu $ and $ Q_\nu $ with $ \nu \in \mathbb{N} $, as long as $ \varepsilon>0 $ is sufficiently small.  Applying  the Step Lemma \ref{VECSTEP}, we obtain a modifying term $ \omega'_\nu $ and a transformation $ \Phi_\nu $ satisfying 
\begin{equation}\label{VECSTEP1}
	{\left| {\omega'_\nu} \right|} \leqslant 4{\left\| {{Q_\nu }} \right\|_{{\sigma_\nu }}} \leqslant 4{\varepsilon _\nu }
\end{equation}
and (also using the Cauchy type Lemma \ref{VECCAULE})
\begin{equation}\label{VECSTEP2}
	K{\left\| {{\Phi _\nu } - {\rm id}} \right\|_{\alpha {\sigma _\nu }}} \vee {\left\| {D{\Phi _\nu } - \mathbb{I}} \right\|_{\alpha {\sigma _\nu }}} \leqslant 4{\Delta _\nu }{\left\| {{Q_\nu }} \right\|_{{\sigma _\nu }}} \leqslant 4{\Delta _\nu }{\varepsilon _\nu }.
\end{equation}
By setting the new modifying term $ {{\tilde \omega }_{\nu  + 1}} = {{\tilde \omega }_\nu } + \omega'_\nu $ and the new transformation $ {\Psi _{\nu  + 1}} = {\Psi _\nu } \circ {\Phi _\nu } $, and recalling \eqref{VECshoulian}, we obtain
\begin{align}
	\Psi _{\nu  + 1}^ * \left( {\omega + {P_{\nu  + 1}} - {{\tilde \omega }_{\nu  + 1}}} \right) &= \omega + Q_\nu ^ +  + \Psi _{\nu  + 1}^ * {\Delta ^ * }{P_{\nu  + 1}}\notag \\
	\label{QNU+1}: &= \omega + {Q_{\nu  + 1}}.
\end{align}

With \eqref{QNU+1} in mind, we next establish the induction for $ {\left\| {{Q_\nu }} \right\|_{{\sigma _\nu }}} \leqslant {\varepsilon _\nu } $. Let $ r>0 $ be chosen sufficiently large such that $ 12{{\rm e}^{ - \gamma r}} \leqslant q \in \left( {0,1} \right) $. On the one hand, the Step Lemma \ref{VECSTEP} provides the estimate
\begin{equation}\label{VECQNU+}
	{\left\| {Q_\nu ^ + } \right\|_{\lambda {\sigma _\nu }}} \leqslant q {{\rm e}^{ - \gamma {d_\nu }}}{\left\| {{Q_\nu }} \right\|_{{\sigma _\nu }}}
\end{equation}
 with $ \lambda  = {b^{ - 1}}\mathop {\overline {\lim } }\nolimits_{\nu \to \infty } d_\nu ^{ - 1}{d_{\nu  + 1}} \in \left( {0,1} \right) $. On the other hand, one could derive the following similar to the previous arguments:
\begin{align}
{\left\| {\Psi _{\nu  + 1}^ * {\Delta ^ * }{P_{\nu  + 1}}} \right\|_{\lambda {\sigma _\nu }}} &= {\left\| {D\Psi _{\nu  + 1}^{ - 1} \cdot {\Delta ^ * }{P_{\nu  + 1}} \circ {\Psi _{\nu  + 1}}} \right\|_{\lambda {\sigma _\nu }}} \leqslant 2{\left\| {{\Delta ^ * }{P_{\nu  + 1}}} \right\|_{\alpha {\sigma _\nu }}}\notag \\
& = 2{\Bigg\| {\sum\limits_{k \in {\mathscr{K}_{\nu  + 1}}} {\hat P(k){{\rm e}^{{\rm i}k \cdot x}}} } \Bigg\|_{\alpha {\sigma _\nu }}} \leqslant 2\sum\limits_{k \in {\mathscr{K}_{\nu  + 1}}} {|\hat P(k)|{{\rm e}^{\alpha {\sigma _\nu }{K_{\nu  + 1}}}}}\notag \\
& \leqslant 2{{\rm e}^{\alpha b\left( {r + {d_\nu }} \right)}}\sum\limits_{k \in {\mathscr{K}_{\nu  + 1}}} {|\hat P(k)|}  \leqslant \frac{{2{{\rm e}^{\alpha b\left( {r + {d_\nu }} \right)}}}}{{m\left( {{b^\nu }} \right)}}\sum\limits_{k \in {\mathscr{K}_{\nu  + 1}}} {|\hat P(k)|{m_k}}\notag \\
\label{VECPSIDELTAP}& = \frac{{2{{\rm e}^{{2^{ - 1}}\left( {1 + \lambda } \right)b\left( {r + {d_\nu }} \right)}}{{\left\| {{\Delta ^ * }{P_{\nu  + 1}}} \right\|}_m}}}{{m\left( {{b^\nu }} \right)}},
\end{align} 
where $ \alpha  = {2^{ - 1}}\left( {{\lambda} + 1} \right) $, and the fact $ {K_{\nu  + 1}} = b{K_\nu } $ is used. \textit{This turns the usual exponential weighted norm depending on the iterative sequence in KAM theory to the $ m $-weighted norm introduced in this paper, which allows us to determine the concrete regularity required for the  perturbation more feasible and convenient.} Now, it follows from \eqref{VECQNU+} and \eqref{VECPSIDELTAP} that
\begin{align*}
{\left\| {{Q_{\nu  + 1}}} \right\|_{{\sigma _{\nu  + 1}}}} & \leqslant {\left\| {Q_\nu ^ + } \right\|_{\lambda {\sigma _\nu }}} + {\left\| {\Psi _{\nu  + 1}^ * {\Delta ^ * }{P_{\nu  + 1}}} \right\|_{\lambda {\sigma _\nu }}}\\
&  \leqslant q{{\rm e}^{ - \gamma {d_\nu }}}{\left\| {{Q_\nu }} \right\|_{{\sigma _\nu }}} + \frac{{2{{\rm e}^{{2^{ - 1}}\left( {1 + \lambda } \right)b\left( {r + {d_\nu }} \right)}}{{\left\| {{\Delta ^ * }{P_{\nu  + 1}}} \right\|}_m}}}{{m\left( {{b^\nu }} \right)}}\\
& \leqslant q{{\rm e}^{ - \gamma {d_\nu }}}{\varepsilon _\nu } + \frac{{2{{\rm e}^{{2^{ - 1}}\left( {1 + \lambda } \right)b{d_\nu }}}{{\left\| {{\Delta ^ * }{P_{\nu  + 1}}} \right\|}_m}}}{{m\left( {{b^\nu }} \right)}}\\
&  = C\sum\limits_{\mu  = 0}^\nu  {{q^{\nu  + 1 - \mu }}\exp \left( { - \gamma \sum\limits_{j = \mu }^\nu  {{d_j}} } \right)\frac{{{{\rm e}^{{2^{ - 1}}\left( {1 + \lambda } \right)b{d_{\mu  - 1}}}}{{\left\| {{\Delta ^ * }{P_\mu }} \right\|}_m}}}{{m\left( {{b^{\mu  - 1}}} \right)}}}  + \frac{{2{{\rm e}^{{2^{ - 1}}\left( {1 + \lambda } \right)b{d_\nu }}}{{\left\| {{\Delta ^ * }{P_{\nu  + 1}}} \right\|}_m}}}{{m\left( {{b^\nu }} \right)}}\\
&  \leqslant C\sum\limits_{\mu  = 0}^{\nu  + 1} {{q^{\nu  + 1 - \mu }}\exp \left( { - \gamma \sum\limits_{j = \mu }^\nu  {{d_j}} } \right)\frac{{{{\rm e}^{{2^{ - 1}}\left( {1 + \lambda } \right)b{d_{\mu  - 1}}}}{{\left\| {{\Delta ^ * }{P_\mu }} \right\|}_m}}}{{m\left( {{b^{\mu  - 1}}} \right)}}} \\
& = {\varepsilon _{\nu  + 1}},
\end{align*}
which completes the induction for $ {\left\| {{Q_\nu }} \right\|_{{\sigma _\nu }}} \leqslant {\varepsilon _\nu } $ in \eqref{VECdiedai1}.

 Moreover, note that both $ \Phi_\nu $ and $ \Psi_\nu $ are nearly identical transformations, and $ {\Psi _{\nu  + 1}}  = {\Psi _\nu } \circ {\Phi _\nu } $. Therefore,  direct calculation gives
\begin{align}
D{\Psi _{\nu  + 1}} - \mathbb{I}	& = D{\Psi _\nu } \circ {\Phi _\nu } \cdot D{\Phi _\nu } - \mathbb{I}\notag \\
	& = D{\Psi _\nu } \circ {\Phi _\nu } + D{\Psi _\nu } \circ {\Phi _\nu } \cdot (D{{ \Phi }_\nu }-\mathbb{I}) - \mathbb{I} \circ {\Phi _\nu }\notag \\
	\label{HUAJIANDEL} & = (D{{ \Psi }_\nu }-\mathbb{I}) \circ {\Phi _\nu } + D{\Psi _\nu } \circ {\Phi _\nu } \cdot (D{{ \Phi }_\nu }-\mathbb{I}).
\end{align} 
Similar to that in \eqref{VECHATFAI}, by  \eqref{VECdiedai2}, \eqref{VECSTEP2}  and the fact that $ r $ is sufficiently large, we get
\begin{equation}\label{recall}
	\|{{ \Phi }_\nu }-{\rm id}\|_{{\alpha{\sigma_\nu }}} \leqslant \frac{{4{\Delta _\nu }{\varepsilon _\nu }}}{{{K_\nu }}} \leqslant \frac{{4\alpha\left( {r + {d_\nu }} \right)}}{{{K_\nu }}} = \alpha{\sigma_\nu }\leqslant \sigma_\nu.
\end{equation}
Hence, it follows from the Neumann Lemma \ref{VECL1} and \eqref{VECSTEP2} that
\begin{align*}
	{\| D{\Psi _{\nu  + 1}} - \mathbb{I} \|_{{\sigma_{\nu  + 1}}}} &\leqslant {\| {D{\Psi _{\nu  }} - \mathbb{I}} \|_{{\sigma_\nu }}} + {\left\| {D{\Psi _\nu }} \right\|_{{\sigma_\nu }}}{\| {D{{ \Phi }_\nu }-\mathbb{I}} \|_{{\sigma_{\nu  + 1}}}}\\
	& \leqslant {\delta _\nu } + \left( {1 + {\delta _\nu }} \right) \cdot 4{\Delta _\nu }{\varepsilon _\nu }\\
	& = \left( {1 + {\delta _\nu }} \right)\left( {1 + 4{\Delta _\nu }{\varepsilon _\nu }} \right) - 1\\
	& = \left( {\prod\limits_{\mu  = 0}^{\nu  - 1} {\left( {1 + 4{\Delta _\mu }{\varepsilon _\mu }} \right)} } \right)\left( {1 + 4{\Delta _\nu }{\varepsilon _\nu }} \right) - 1\\
	& = \prod\limits_{\mu  = 0}^\nu  {\left( {1 + 4{\Delta _\mu }{\varepsilon _\mu }} \right)}  - 1\\
	& = {\delta _{\nu  + 1}},
\end{align*}
which completes the proof of \eqref{VECdiedai2} for all $ \nu \in \mathbb{N} $.

As for the proof of  \eqref{VECdiedai3}, it suffices to recall the relation $ {\tilde\omega_{\nu  + 1}} = {\tilde\omega_\nu } + {\omega'_\nu } $ and the estimate \eqref{VECSTEP1} derived from the Step Lemma \ref{VECSTEP}.

Finally, direct calculation yields
\begin{align}
	D{\Psi _{\nu  + 1}} - D{\Psi _\nu }  &= D\left( {{\Psi _\nu } \circ {\Phi _\nu }} \right) - D{\Psi _\nu }\notag \\
	& = D{\Psi _\nu } \circ {\Phi _\nu } \cdot D{\Phi _\nu } - D{\Psi _\nu }\notag \\
	\label{VECZUIHOU} & = \left( {D{\Psi _\nu } \circ {\Phi _\nu } - D{\Psi _\nu }} \right) + ( {D{\Psi _\nu } \circ {\Phi _\nu } \cdot (D{{ \Phi }_\nu }-\mathbb{I})} ).
\end{align}
For the first term in \eqref{VECZUIHOU}, the Cauchy type Lemma \ref{VECCAULE},  along with  \eqref{VECSTEP2}, provides the following estimate:
\begin{align}
	{\left\| {D{\Psi _\nu } \circ {\Phi _\nu } - D{\Psi _\nu }} \right\|_{0}} &\leqslant {\left\| {{D^2}{\Psi _\nu }} \right\|_{0}}{\left\| { {\Phi _\nu }-{\rm id}} \right\|_0} \leqslant \frac{{1}}{{\rm e}{{\sigma_\nu }}}{\left\| {D{\Psi _\nu }} \right\|_{{\sigma_{\nu  }}}}{\left\| {{\Phi _\nu }-{\rm id}} \right\|_{0}}\notag \\
	\label{VECzuihou1}& \leqslant \frac{{2}}{{\rm e}{{\sigma_\nu }}}{\| {{{ \Phi_\nu -{{\rm id}}} }} \|_{\alpha{\sigma_\nu }}} \leqslant \frac{{2{\Delta _\nu }{\varepsilon _\nu }}}{{\rm e}{{\sigma_\nu }{K_\nu }}} \leqslant \frac{{2}}{{\rm e}r}{\Delta _\nu }{\varepsilon _\nu } \leqslant 4{\Delta _\nu }{\varepsilon _\nu },
\end{align}
provided that $ r \geqslant (2{\rm e})^{-1} $. For the second term in \eqref{VECZUIHOU}, it follows from \eqref{VECdiedai2} and \eqref{VECSTEP2} that
\begin{equation}\label{VECzuihou2}
	{\| {D{\Psi _\nu } \circ {\Phi _\nu } \cdot (D{{ \Phi }_\nu }-\mathbb{I})} \|_{0}} \leqslant {\left\| {D{\Psi _\nu }} \right\|_{{\alpha \sigma_\nu }}}{\| {D{{ \Phi }_\nu }}-\mathbb{I} \|_{0}} \leqslant {\Delta _\nu } \cdot 4{\Delta _\nu }{\varepsilon _\nu } \leqslant 8{\Delta _\nu }{\varepsilon _\nu },
\end{equation}
because $ 0<\varepsilon \ll 1 $. Now, substituting \eqref{VECzuihou1} and \eqref{VECzuihou2} into \eqref{VECZUIHOU}, one finally proves \eqref{VECdiedai4}:
\[{\left\| {D{\Psi _{\nu  + 1}} - D{\Psi _\nu }} \right\|_{0}} \leqslant 12{\Delta _\nu }{\varepsilon _\nu }.\]
\end{proof}

\subsubsection{The uniform convergence}\label{VEC Uniform convergence}
The uniform convergence in the Iterative Lemma \ref{VECIterative} for KAM conjugacy is easy to see, that is, the modifying terms $ \tilde \omega _\nu $ admit a unique limit denoted as $ \tilde \omega $, the  transformations $ \Psi_\nu $ have a unique $ C^1 $  limit $ \Psi $ ($ \Psi $ is at least $ C^1 $ because $ D\Psi $ is continuous, and it  might admit higher regularity), and $ Q_\nu \to 0$ in the $ \| \cdot \|_0 $ norm. As a consequence, the limit form of \eqref{VECshoulian} is 
\[{\Psi ^ * }\left( {\omega  - \tilde \omega  + P} \right) = \omega ,\]
which proves the desired KAM conjugacy in the Abstract $ m $-weighted KAM Theorem \ref{VECABSTRACT} for the infinite-dimensional  vector field $ \omega $.

\subsection{Proof of Theorem \ref{VECT1}}\label{VECGE}
We shall employ the Abstract $ m $-weighted KAM Theorem \ref{VECABSTRACT} to prove Theorem \ref{VECT1}. For given $ b,\eta >1 $,  set the balancing sequence as $ {d_j} = {c_1}j{\theta ^j}$ with $ \theta  = {b^{\frac{1}{{1 + \eta }}}} \in \left( {1,b} \right) $, where $ c_1 \gg 1 $ is an undetermined number. Then, it is evident to  verify that $ \lambda  = {b^{ - 1}}\mathop {\overline {\lim } }\nolimits_{\nu  \to \infty } d_\nu ^{ - 1}{d_{\nu  + 1}} = {b^{ - 1}}\theta  $ and $ \gamma  = \frac{3}{4}\left( {1 - \lambda } \right) = \frac{3}{4}\left( {1 - {b^{ - 1}}\theta } \right) $. On the one hand, it follows that
\begin{equation}\label{VECT1VERIFY}
\exp \left( { - \gamma \sum\limits_{j = 0}^{\nu  - 1} {{d_j}} } \right) = \exp \left( { - {c_1}\gamma \sum\limits_{j = 0}^{\nu  - 1} {j{\theta ^j}} } \right) = \mathcal{O}\left( {\exp \left( { - {c_1}{c_2}\nu {\theta ^\nu }} \right)} \right),
\end{equation}
provided with some $ c_2>0 $ independent of $ c_1>0$.
On the other hand, the Diophantine small divisor Lemma \ref{VECSM} yields the small divisor estimate as
\begin{equation}\label{VECT1Y}
	{b^\nu }\mathop {\max }\limits_{0 < {{\left| k \right|}_\eta } \leqslant {b^\nu }} {\left| {k \cdot \omega } \right|^{ - 1}} \leqslant {b^\nu }\exp \left( {C\left( {\eta ,\mu ,b} \right)\nu {b^{\frac{\nu }{{1 + \eta }}}}} \right).
\end{equation}
Hence, if $ {c_1} \geqslant c_2^{ - 1}C\left( {\eta ,\mu ,b} \right) $, then \eqref{VECT1VERIFY} and  \eqref{VECT1Y}  ensure  the boundedness required by Theorem \ref{VECABSTRACT} for all $ 0 < q < {b^{ - 1}} <1$:
\begin{align}
\sum\limits_{\nu  = 0}^\infty  {{q^\nu }\exp \left( { - \gamma \sum\limits_{j = 0}^{\nu  - 1} {{d_j}} } \right){b^\nu }\mathop {\max }\limits_{0 < {{\left| k \right|}_\eta }{b^\nu }} {{\left| {k \cdot \omega } \right|}^{ - 1}}} 
& = \mathcal{O}\left( {\sum\limits_{\nu  = 0}^\infty  {{q^\nu }\mathcal{O}\left( {\exp \left( { - {c_1}{c_2}\nu {\theta ^\nu }} \right)} \right) \cdot {b^\nu }\exp \left( {C\left( {\eta ,\mu ,b} \right)\nu {\theta ^\nu }} \right)} } \right)\notag \\
 &= \mathcal{O}\left( {\sum\limits_{\nu  = 0}^\infty  {{{\left( {qb} \right)}^\nu } \cdot \exp \left( { - \left( {{c_1}{c_2} - C\left( {\eta ,\mu ,b} \right)} \right)\nu {\theta ^\nu }} \right)} } \right)\notag \\
&= \mathcal{O}\left( {\sum\limits_{\nu  = 0}^\infty  {{{\left( {qb} \right)}^\nu }} } \right)\notag \\
\label{VECT1P} &= \mathcal{O}\left( 1 \right),
\end{align}
which implies the existence of the weight $ m $ admitted by the perturbation $ P $.
More explicitly, by recalling (II) in Theorem \ref{VECABSTRACT}, the weight  could be chosen as 
\[{m_k} = m\left( {{{\left| k \right|}_\eta }} \right) = \exp \left( {{C_{\rm G1}}\ln \left( {1 + {{\left| k \right|}_\eta }} \right)\left| k \right|_\eta ^{\frac{1}{{1 + \eta }}}} \right)\]
with some $ C_{\rm G1}\gg 1 $. To see this, we assert that
\[\mathop {\overline {\lim } }\limits_{\mu  \to \infty } \frac{{\rho \left( \mu  \right){{\rm e}^{{2^{ - 1}}\left( {1 + \lambda } \right)b{d_{\mu  - 1}}}}}}{{m\left( {{b^{\mu  - 1}}} \right)}} <  + \infty .\]
It is evident to observe that
\[{{\mathrm{e}}^{{2^{ - 1}}\left( {1 + \lambda } \right)b{d_{\mu  - 1}}}} = \exp \left( {\mathcal{O}\left( {\left( {\mu  - 1} \right){\theta ^{\mu  - 1}}} \right)} \right),\;\;\mu  \gg 1,\]
and with the help of \eqref{VECT1P}, one has
\begin{align}
	\rho \left( \mu  \right) &= \sum\limits_{\nu  = \mu }^\infty  {{q^{\nu  - \mu }}\exp \left( { - \gamma \sum\limits_{j = \mu }^{\nu  - 1} {{d_j}} } \right){b^\nu }\mathop {\max }\limits_{0 < {{\left| k \right|}_\eta \leqslant }{b^\nu }} {{\left| {k \cdot \omega } \right|}^{ - 1}}} \notag \\
& = \sum\limits_{\nu  = \mu }^\infty  {{q^{ - \mu }}\exp \left( {\gamma \sum\limits_{j = 0}^{\mu  - 1} {{d_j}} } \right) \cdot {q^\nu }\exp \left( { - \gamma \sum\limits_{j = 0}^{\nu  - 1} {{d_j}} } \right){b^\nu }\mathop {\max }\limits_{0 < {{\left| k \right|}_\eta } \leqslant {b^\nu }} {{\left| {k \cdot \omega } \right|}^{ - 1}}} \notag \\
\label{VECT1PR}&  = \mathcal{O}\left( {{q^{ - \mu }}\exp \left( {\gamma \sum\limits_{j = 0}^{\mu  - 1} {{d_j}} } \right)} \right) = \exp \left( {\mathcal{O}\left( {\left( {\mu  - 1} \right){\theta ^{\mu  - 1}}} \right)} \right),\;\;\mu  \gg 1.
\end{align}
The above two terms could be well dominated by  $ m\left( {{b^{\mu  - 1}}} \right) $, since
\[m\left( {{b^{\mu  - 1}}} \right) = \exp \left( {{C_{\rm{G1}}}\ln \left( {1 + {b^{\mu  - 1}}} \right){b^{\frac{{\mu  - 1}}{{1 + \eta }}}}} \right) \geqslant \exp \left( {\left( {{C_{{\rm{G1}}}}\ln b} \right)\left( {\mu  - 1} \right){\theta ^{\mu  - 1}}} \right),\]
as promised. Finally, the regularity  for the perturbation $ P  $ can be directly derived from $ m_k $ by utilizing the Abstract $ m $-weighted KAM Theorem \ref{VECABSTRACT}. Specifically, assuming that
\[{\left\| P \right\|_m} = \sum\limits_{0 \ne k \in \mathbb{Z}_*^\infty } {|\hat P(k)|\exp \left( {{C_{{\rm{G1}}}}\ln \left( {1 + {{\left| k \right|}_\eta }} \right)\left| k \right|_\eta ^{\frac{1}{{1 + \eta }}}} \right)}  \ll 1\]
for some $ {C_{{\rm{G1}}}} \gg 1 $ is sufficient to ensure  the KAM conjugacy for the vector field $ \omega $ with the infinite-dimensional Diophantine nonresonance as defined in Definition \ref{VECINDIO}. This establishes the desired result.

\begin{remark}
One may observe that our choices for the balancing sequence $ \{d_\nu\}_{\nu \in \mathbb{N}} $ and the  weight $ m $ are somewhat rough. It can indeed be quantitatively  improved by employing more elaborate asymptotic analysis. However, we prefer not to delve into this here for the sake of simplicity.
\end{remark}
\subsection{Proofs of Theorems \ref{VECT2} and  \ref{VECT33}}\label{VECPT2}
The proofs are based on the observation that the weighted norm for the vector $ k \in \mathbb{Z}_*^\infty $ employed in  our Abstract $ m $-weighted KAM Theorem \ref{VECABSTRACT}, namely $ {{\left| k  \right|}_\eta }: = \sum\nolimits_{j \in \mathbb{Z}} {{{\left\langle j \right\rangle }^\eta }\left| {{k _j}} \right|}  <  + \infty $, could be replaced by any  norm $ {\left\| k \right\|_w}: = \sum\nolimits_{j \in \mathbb{Z}} {w\left( {\left\langle j \right\rangle } \right)\left| {{k_j}} \right|} $ with a certain approximation function $ w $, because we only use the property $ \left| k \right| \leqslant {\left| k \right|_\eta } $ throughout the analysis, see lemmas given in Section \ref{VECLEMMAS} for instance. Now, it is crucial to construct a nonresonant condition for which almost all frequencies in $ \mathbb{R}^\mathbb{Z} $ hold. For   weight (approximation functions) $ \Delta $ and $ w $ to be determined, set the small divisor condition as
\[\left| {k \cdot \omega } \right| > \frac{{{\gamma ^ * }}}{{\Delta \left( {{{\left\| k \right\|}_w}} \right)}},\;\;{\gamma ^ * } > 0,\;\;0 \ne k \in \mathbb{Z}_ * ^\infty .\]
It suffices to require $ \sum\nolimits_{0 \ne k \in \mathbb{Z}_ * ^\infty } {\frac{{{\gamma ^ * }}}{{\Delta \left( {{{\left\| k \right\|}_w}} \right)}}}  = o\left( 1 \right) $ as $ {\gamma ^ * } \to {0^ + } $. Now we are in a position to establish a cardinality estimate. For $ \nu \gg 1 $, we observe that the largest $ j $ such that $ |k_j| \ne 0 $ under assumption $ {\left\| k \right\|_w} \in [\nu  - 1,\nu ) $ must satisfy $ {j_{\max }} \leqslant {w^{ - 1}}\left( \nu  \right) $, because $ w\left( {\left\langle j \right\rangle } \right) \leqslant w\left( {\left\langle j \right\rangle } \right)\left| {{k_j}} \right| \leqslant {\left\| k \right\|_w} \leqslant \nu  $. As a consequence, it holds
\begin{align}
&\# \left\{ {0 \ne k \in \mathbb{Z}_ * ^\infty :{{\left\| k \right\|}_w} \in [\nu  - 1,\nu )} \right\}\notag \\
 \leqslant &\# \left\{ {0 \ne k \in \mathbb{Z}_ * ^\infty :\left| {{k_0}} \right| + \left| {{k_{ - 1}}} \right| + \left| {{k_1}} \right| +  \cdots  + |{k_{ - [{w^{ - 1}}\left( \nu  \right)]}}| + |{k_{[{w^{ - 1}}\left( \nu  \right)]}}| = \nu } \right\}\notag \\
 \leqslant &{2^{2[{w^{ - 1}}\left( \nu  \right)] + 1}}\# \left\{ {{k_j} \in \mathbb{N}:{k_0} + {k_{ - 1}} + {k_1} +  \cdots  + {k_{ - [{w^{ - 1}}\left( \nu  \right)]}} + {k_{[{w^{ - 1}}\left( \nu  \right)]}} = \nu } \right\}\notag \\
\label{VECJISHU} \leqslant &{2^{2[{w^{ - 1}}\left( \nu  \right)] + 1}}C_{\nu  + [{w^{ - 1}}\left( \nu  \right)]}^\nu .
\end{align}
With \eqref{VECJISHU} and $ \gamma^* \ll 1 $, one gets
\begin{align}
\sum\limits_{0 \ne k \in \mathbb{Z}_ * ^\infty } {\frac{{{\gamma ^ * }}}{{\Delta \left( {{{\left\| k \right\|}_w}} \right)}}}  &= {\gamma ^ * }\sum\limits_{\nu  \geqslant 0} {\sum\limits_{{{\left\| k \right\|}_w} \in [\nu  - 1,\nu )} {\frac{1}{{\Delta \left( {{{\left\| k \right\|}_w}} \right)}}} }  \notag \\
\label{VECYJ}&= {\gamma ^ * }\mathcal{O}\left( {\sum\limits_{\nu  \geqslant 0} {\frac{{{2^{2[{w^{ - 1}}\left( \nu  \right)] + 1}}C_{\nu  + [{w^{ - 1}}\left( \nu  \right)]}^\nu }}{{\Delta \left( \nu  \right)}}} } \right) =\mathcal{O}\left( {{\gamma ^ * }} \right)=o(1)
\end{align}
as desired, whenever  $ \Delta $ is larger than an arbitrarily given polynomial's type, that is, $ \Delta \left( x \right) \gg {x^L} $ with any $ L>0 $, and $ w $ depending on $ \Delta $ could be suitably chosen to increase rapid enough, thus in this case $ [{w^{ - 1}}\left( \nu  \right)] $ is  small (but tends to $ +\infty $) and it could be killed by $ \Delta {\left( \nu  \right)^{ - 1}} $, we therefore obtain the   boundedness of the series in \eqref{VECYJ} (as well as the global smallness if $ \gamma^* \ll 1 $). As a remark, in this infinite-dimensional case, $ \Delta \left( x \right) \gg {x^L} $ is essential due to the counterexamples (as the dimension tends to $ +\infty $) constructed by Herman \cite{MR0874026} and  etc. This can also be seen in our argument, that is, assuming $ \Delta \left( x \right) = {x^L} $ with some $ L>0 $, we have to require that $ [{w^{ - 1}}\left( \nu  \right)] = \mathcal{O}\left( 1 \right) $ by the properties of the combinations, which contradicts with the assumption on the weight (approximation function) $ w$, since it must tend to $ +\infty $. It is also worth emphasizing that, the Jackson type approximation theorem for finitely differentiable maps admits an \textit{optimal} error estimate, and the control constant depending on the dimension $ n $ indeed tends to $ +\infty $ as $ n \to +\infty $. Therefore, even if there does exist a  nonresonant condition of the polynomial's type under the infinite-dimensional setting, one has no way to deal with them.

Based on the above preparation, we are now ready to choose an \textit{explicit}  approximation function $ \Delta $ admitted by almost all $ \omega \in \mathbb{R}^{\mathbb{Z}} $. Although the implicit case could be weaker, we prefer to choose an explicit one in order to highlight the result and avoid complexity. As an illustration, let $ \Delta \left( x \right) \sim \exp \left( {{{\left( {\ln x} \right)}^a}} \right) $ with some $ a>1 $. One easily verifies that it is larger than any polynomial's type. It remains to construct the regularity for the perturbation via (I) and (II) in Theorem \ref{VECABSTRACT}. For $b>1$, setting $ {d_j} = A{j^{a - 1}} $ yields $ \lambda  = {b^{ - 1}}\mathop {\overline {\lim } }\nolimits_{j \to \infty } d_j^{ - 1}{d_{j + 1}} = {b^{ - 1}} $ and $ \gamma  = \frac{3}{4}\left( {1 - \lambda } \right) = \frac{3}{4}\left( {1 - {b^{ - 1}}} \right) $. Then we let $A > a{\gamma ^{ - 1}}{\left( {\ln b} \right)^a}  $. Besides, one easily checks that
\begin{equation}\label{VECT2PIE}
	A\gamma {a^{ - 1}}{\left( {\nu  - 1} \right)^a} = A\gamma \int_0^{\nu  - 1} {{s^{a - 1}}{\rm d}s}  \leqslant \gamma \sum\limits_{j = 0}^{\nu  - 1} {{d_j}}  \leqslant A\gamma \int_0^\nu  {{s^{a - 1}}{\rm d}s}  = A\gamma {a^{ - 1}}{\nu ^a}.
\end{equation}
In this case, assuming $ q \in \left( {0,{b^{ - 1}}} \right) $ leads to the boundedness in (I)
\begin{align*}
&\sum\limits_{\nu  = 0}^\infty  {{q^\nu }\exp \left( { - \gamma \sum\limits_{j = 0}^{\nu  - 1} {{d_j}} } \right){b^\nu }\mathop {\max }\limits_{0 < {{\left\| k \right\|}_w} \leqslant {b^\nu }} {{\left| {k \cdot \omega } \right|}^{ - 1}}} \\
\leqslant & {\gamma ^ * }^{ - 1}\sum\limits_{\nu  = 0}^\infty  {{{\left( {qb} \right)}^\nu }\exp \left( { - A\gamma {a^{ - 1}}{{\left( {\nu  - 1} \right)}^a} + {{\left( {\ln b} \right)}^a}{\nu ^a}} \right)} \\
= &\mathcal{O}\left( {\sum\limits_{\nu  = 0}^\infty  {{{\left( {qb} \right)}^\nu }} } \right) = \mathcal{O}\left( 1 \right)
\end{align*}
due to our choice of $ A $. Moreover, similar to the argument in \eqref{VECT1PR}, we derive  
\[\rho \left( \mu  \right) = \mathcal{O}\left( {\exp \left( { - \gamma \sum\limits_{j = \mu }^{\nu  - 1} {{d_j}} } \right){b^\nu }} \right) = \mathcal{O}\left( {\exp \left( {\left( {{{\left( {\ln b} \right)}^a} + \epsilon } \right){\mu ^a}} \right)} \right)\]
for any  $ \epsilon>0 $, and $ {{\rm e}^{{2^{ - 1}}\left( {1 + \lambda } \right)b{d_{\mu  - 1}}}} = o\left( {\exp \left( {{\mu ^a}} \right)} \right) $. Note that $ b>1 $ and $ \epsilon>0 $ could be arbitrarily chosen, therefore for any $ C_{\rm L1}>1 $, letting $ {m_k} = m\left( {{{\left\| k \right\|}_w}} \right) \sim \exp \left( {C_{\rm L1} {{\left( {\ln {{\left\| k \right\|}_w}} \right)}^a}} \right) $ is sufficient to obtain the relation
\[\mathop {\overline {\lim } }\limits_{\mu  \to \infty } \frac{{\rho \left( \mu  \right){{\rm e}^{{2^{ - 1}}\left( {1 + \lambda } \right)b{d_{\mu  - 1}}}}}}{{m\left( {{b^{\mu  - 1}}} \right)}} <  + \infty \]
in (II), and this yields the explicit regularity requirement  for the perturbation:
\[\sum\limits_{0 \ne k \in \mathbb{Z}_ * ^\infty } {|\hat P(k)|\exp \left( {C_{\rm L1} {{\left( {\ln {{\left\| k \right\|}_w}} \right)}^a}} \right)}  \ll 1.\]
Finally, one easily checks that $ C_{\rm L1}>1 $ could be replaced by $ C_{\rm L1}'>0 $ in the above conclusion, if we modify the approximation function $ \Delta \left( x \right) \sim \exp \left( {{{\left( {\ln x} \right)}^a}} \right) $ by $ \Delta \left( x \right) \sim \exp \left( {{{\left( C_{\rm L1}''{\ln x} \right)}^a}} \right) $ for sufficiently small $ C_{\rm L1}''>0 $. This proves Theorem \ref{VECT2}.

As for Theorem \ref{VECT33}, the analysis is indeed similar, as long as we note that the weight $ u $ depends on the given `super-polynomial' $ \Delta $, as previously shown.

\subsection{Proof of Theorem \ref{VECT3}}\label{VECYOUXIAN}
Here, we would like to emphasize again that the proof of  the Abstract $ m $-weighted KAM Theorem \ref{VECABSTRACT} is not closely related to the specific form of the nonresonance itself (such as the infinite-dimensional Diophantine nonresonance in Definition \ref{VECINDIO}), but rather strongly depends on the small divisor estimate it leads to (such as the Diophantine small divisor Lemma \ref{VECSM}). Based on this observation, we shall now proceed to present the proof  for the quasi-periodic case.

We first consider the proof of (i). It is well known that frequencies satisfying the finite-dimensional Diophantine nonresonance in \eqref{VECFINITEDIO} do exist, and admit full Lebesgue measure in $ \mathbb{R}^n $. Similar to the  arguments in Section \ref{VECPT2},  the Abstract $ m $-weighted KAM Theorem \ref{VECABSTRACT} does work for the finite-dimensional case, because $ \left| k \right| \leqslant \left| k \right| $ (note that the weight $ w $ being some  approximation function is only required for the infinite-dimensional case). Therefore, with the Diophantine assumption \eqref{VECFINITEDIO} for the  vector field $ \omega \in \mathbb{R}^n$, setting the balancing sequence $ d_j\equiv0 $ yields 
\[\sum\limits_{\nu  = 0}^\infty  {{q^\nu }\exp \left( { - \gamma \sum\limits_{j = 0}^{\nu  - 1} {{d_j}} } \right){b^\nu }\mathop {\max }\limits_{0 < \left| k \right| \leqslant {b^\nu }} {{\left| {k \cdot \omega } \right|}^{ - 1}}}  \leqslant {\gamma ^ * }^{ - 1}\sum\limits_{\nu  = 0}^\infty  {{q^\nu }{b^{\left( {\beta  + 1} \right)\nu }}}  = \mathcal{O}\left( 1 \right)\]
for all $ q \in \left( {0,{b^{ - \beta  - 1}}} \right) $ and $ b>1 $. Besides, by the exponential property, one obtains that
\begin{align*}
\rho \left( \mu  \right)& = \sum\limits_{\nu  = \mu }^\infty  {{q^{\nu  - \mu }}\exp \left( { - \gamma \sum\limits_{j = \mu }^{\nu  - 1} {{d_j}} } \right){b^\nu }\mathop {\max }\limits_{0 < \left| k \right| \leqslant {b^\nu }} {{\left| {k \cdot \omega } \right|}^{ - 1}}} \\
&= \sum\limits_{\nu  = \mu }^\infty  {{q^{\nu  - \mu }}{b^{\left( {\beta  + 1} \right)\nu }}}  = {q^{ - \mu }}\sum\limits_{\nu  = \mu }^\infty  {{{\left( {q{b^{\beta  + 1}}} \right)}^\nu }}  = \mathcal{O}\left( {{q^{ - \mu }} \cdot {{\left( {q{b^{\beta  + 1}}} \right)}^\mu }} \right) = \mathcal{O}\left( {{b^{\left( {\beta  + 1} \right)\mu }}} \right),
\end{align*}
which shows that taking the weight $ m $ as $ {m_k} = m\left( {\left| k \right|} \right) = {\left| k \right|^{\beta  + 1}} $ is sufficient to achieve the requirement of (II) in Theorem \ref{VECABSTRACT}. Therefore, the KAM conjugacy could be proved whenever $ \sum\nolimits_{0 \ne k \in {\mathbb{Z}^n}} {|\hat P(k)|{{\left| k \right|}^{\beta+1}}} \ll 1 $, which establishes (i).

As for the proofs of (ii) and (iii), it is  evident that the almost periodic results, namely Theorem \ref{VECT1} and Theorem \ref{VECT2}, could be valid for the quasi-periodic case, provided  one appropriately  modifies the nonresonant conditions and  replaces $ |k|_\eta $ with $ |k| $.   A primary distinction between the finite-dimensional and infinite-dimensional Gevrey cases lies in the form of the small divisor estimates. For the infinite-dimensional Diophantine nonresonance, as shown in the Diophantine small divisor Lemma \ref{VECSM}, the estimate is of the form  (note that $0< \frac{1}{1+\eta} <1$):
 	\begin{equation}\label{VECJJJ}
 			\mathop {\max }\limits_{0 < \left| k \right|_\eta \leqslant N} {\left| {k \cdot \omega } \right|^{ - 1}} \leqslant {\gamma ^ * }^{-1} \mathop {\sup }\limits_{0<{{\left| k \right|}_\eta } < N} \prod\limits_{j \in \mathbb{Z}} {\left( {1 + {{\left\langle j \right\rangle }^\mu }{{\left| {{k_j}} \right|}^\mu }} \right)}  \leqslant {\gamma ^ * }^{-1} \exp \left( {C\left( {\eta ,\mu } \right){N^{\frac{1}{{1 + \eta }}}}\ln \left( {1 + N} \right)} \right),
 	\end{equation}
  while that based on the finite-dimensional  nonresonance via an approximation function $ \Delta \left( x \right) \sim \exp \left( {{x^\zeta }} \right)  $ with $ 0<\zeta <1$ is of the form:
  \begin{equation}\label{VECYXG}
  	\mathop {\max }\limits_{0 < \left| k \right| \leqslant N} {\left| {k \cdot \omega } \right|^{ - 1}} \leqslant {\gamma ^ * }^{-1}\mathop {\max }\limits_{0 < \left| k \right| \leqslant N} \Delta \left( {\left| k \right|} \right) \leqslant {\gamma ^ * }^{-1} \Delta \left( N \right) \sim {\gamma ^ * }^{-1}\exp \big( {{N^\zeta }} \big).
  \end{equation}
The two forms differ by an order of $\ln(1+N)$ within the exponential function. To address this, a balancing sequence
 \[{d_j} = {c_1}j{\theta ^j},\;\;  \theta  = {b^{\frac{1}{{1 + \eta }}}} \in \left( {1,b} \right) ,\;\; c_1 \gg 1, \;\;j \in \mathbb{N}\]
 was introduced in the analysis of the former (Section \ref{VECGE}). The purpose of adding the term  $ j $ before $ \theta^j $ is to match the aforementioned order of $ \ln(1+N) $. However, in the finite-dimensional case, this order of $ \ln(1+N) $ is not present, as shown in \eqref{VECYXG}, hence the balancing sequence can be  chosen simply as 
   \[{d_j} = {c_1}{\theta ^j},\;\;  \theta  = {b^{\frac{1}{{1 + \eta }}}} \in \left( {1,b} \right) ,\;\; c_1 \gg 1, \;\;j \in \mathbb{N}.\]
Under these considerations, it is evident  from the proof of Theorem \ref{VECGE} that choosing 
   \[{m_k} = m\left( {{{\left| k \right|} }} \right) = \exp \left( {{C_{\rm G2}}\left| k \right| ^{\frac{1}{{1 + \eta }}}} \right),\;\; C_{\rm G2} \gg 1\]
    will ensure the KAM conjugacy. To be more precise, we have 
      \[{\rho \left( \mu  \right){{\rm e}^{{2^{ - 1}}\left( {1 + \lambda } \right)b{d_{\mu  - 1}}}}}= \exp \left( \mathcal{O}\left({  {\theta ^{\mu  - 1}}}\right) \right),\]
   and
  \[m\left( {{b^{\mu  - 1}}} \right) = \exp \left( {{C_{\rm{G2}}}{b^{\frac{{\mu  - 1}}{{1 + \eta }}}}} \right)= \exp \left( { {{C_{{\rm{G2}}}}} {\theta ^{\mu  - 1}}} \right) \gg {\rho \left( \mu  \right){{\rm e}^{{2^{ - 1}}\left( {1 + \lambda } \right)b{d_{\mu  - 1}}}}}.\]
    This leads to the regularity of the perturbation $ P $ being:
   \[{\left\| P \right\|_m} = \sum\limits_{0 \ne k \in {\mathbb{Z}^n}} {|\hat P(k)|\exp \left( {{C_{{\mathrm{G2}}}}{{\left| k \right|}^{\frac{1}{{1 + \eta }}}}} \right)}  \ll 1,\]
    thereby proving (ii). In contrast to case (ii), the proof of (iii) is essentially the same as that of Theorem \ref{VECT2}, thus completing the entire proof of Theorem \ref{VECYOUXIAN}.
 \begin{remark}
 	It is pointed out in the Diophantine small divisor Lemma \ref{VECSM} that the estimate in \eqref{VECJJJ} cannot be further reduced to the simple form presented in \eqref{VECYXG}. This represents a fundamental difference between the infinite-dimensional and finite-dimensional Gevrey cases, which ultimately leads to different regularity in the KAM theorems, specifically  Theorem \ref{VECT1} and (ii) in Theorem \ref{VECT3}.
 \end{remark}
%\section{Appendix}

\section{Further discussions}\label{VECSEC6}
\subsection{Some interesting connections with Corsi-Gentile-Procesi's KAM}\label{VECSEC61}
\renewcommand{\thefootnote}{\fnsymbol{footnote}}
Very recently, Corsi-Gentile-Procesi \cite{MR4781767} investigate the existence of full-dimensional tori in a mechanical system
\[H\left( {x,y} \right) = \frac{1}{2}y \cdot y + \varepsilon P\left( x \right)\]
 based on the tree formalism. The Hamiltonian systems there are analytic, whereas in the present paper, we consider $ C^\infty $ vector fields. Upon comparing the two, we discover some very interesting and essential connections, as kindly suggested by Professor Procesi.

 Firstly, both studies  observe that the respective approaches can be applied from infinite-dimension settings  to finite-dimension ones (for instance, \cite[Remark 3]{MR4781767} and Section \ref{VECAPP}). 
 
  Secondly, it should be noted that the vast majority of known analytic KAM techniques  cannot deal with the $ C^\infty $ case. One of the essential difficulties is that the Fourier expansion is not well-defined on a thickened torus.
  
    Thirdly, \cite{MR4781767} does not utilize the estimate based on $ |k|_\eta $ as in Lemma \ref{VECSM}.  Instead, it uses 
   \[|k|_{\star}:=\sum_{j \in \mathbb{Z}} \left|k_j\right|h_j, \;\; h_j=h_{-j} \in \mathbb{R}_{+} \forall j \in \mathbb{Z}, \;\; h_{j+1} \geqslant h_j \forall j \in \mathbb{Z}_{+} \]
   with 
   \[\limsup _{j \rightarrow+\infty} \frac{(\log (1+\langle j\rangle))^\sigma}{h_j}<+\infty \; \text { for some } \sigma>2\]
    to obtain a small divisor estimate as $ N \to +\infty $ for any $ \mu_1,\mu_2>0 $:
    \begin{equation}\label{VECC}
    	\sup _{\substack{\nu \in \mathbb{Z}_*^\infty},\; |k|_{\star} \leqslant N} \prod_{j \in \mathbb{Z}}\left(1+\langle j\rangle^{\mu_1}\left|\nu_j\right|^{\mu_2}\right) \leqslant K_1 \exp\left(K_2 N /(\ln N)^{\sigma-1}\right) \text{for some  universal } K_1, K_2>0,
    \end{equation}
     thereby relaxing as much as possible the spatial structure of the analytic perturbation $ P $ about $ k $. This is quite interesting because the small divisor estimate given in \eqref{VECC} is `critical' in the KAM iteration process, from the perspective of the Bruno condition or the weak Diophantine condition, see P\"oschel \cite{MR1037110} and the authors \cite{MR4836959}\footnote{The result in \cite{MR4836959} has been developed by the authors to the case of infinite-dimensional Hamiltonian systems with frequency-preserving, which corresponds to a Kolmogorov-type theorem.} for instance. In fact, the authors believe that the RHS of \eqref{VECC} could be further relaxed to 
     \[\exp \left( {{K_2}N{{\left( {\ln N} \right)}^{ - 1}}{{\left( {\ln \ln N} \right)}^{ - 1}} \cdots {{\Big( {\underbrace {\ln  \cdots \ln }_{\ell {\text{\;times}} }N } \Big)}^{1 - \sigma }}} \right)\]
    where $ \ell \in \mathbb{N}^ + $, but ensure the validity of the KAM iteration.  
    This would further improve the results in \cite{MR4781767} (by utilizing a weaker spatial structure).  However, the index $ \sigma>2 $ cannot degenerate to $ 2 $ (otherwise, at least the finite-dimensional KAM results would be invalid). As for the KAM regularity for the perturbation $ P $, \cite{MR4781767} requires (below, we give an intuitive statement for some universal $ C>0 $ without loss of generality)
     \begin{equation}\label{VECGUJI2}
     	| \hat{P} ( k ) |  \leqslant \exp \left( {-C\left( {\sum\limits_{j \in \mathbb{Z}} {|{k_j}|{{\left( {\ln \left\langle j \right\rangle } \right)}^\sigma }} } \right)} \right),
     \end{equation}
      while our Theorem \ref{VECT1}, based on the same Diophantine nonresonance (see Definition \ref{VECINDIO}), requires 
     \begin{equation}\label{VECGUJI3}
     	| \hat{P} ( k ) | \leqslant \exp \left( {-C{{\left( {\sum\limits_{j \in \mathbb{Z}} {|{k_j}|{{\left\langle j \right\rangle }^\eta }} } \right)}^{\frac{1}{{1 + \eta }}}}\ln \left( {\sum\limits_{j \in \mathbb{Z}} {|{k_j}|{{\left\langle j \right\rangle }^\eta }} } \right)} \right).
     \end{equation}
      Interestingly, these two conditions do not imply a direct relationship. We illustrate this with a few simple cases: (i) when restricted to finite dimensions, \eqref{VECGUJI2} and \eqref{VECGUJI3} correspond to classical analyticity and Gevrey regularity, respectively, with the latter being weaker; (ii) for a fixed $0\ne k \in \mathbb{Z}_*^{\infty}$, let $ j^* $ be the largest $ |j| $ such that $ k_j $ is  nonzero. If $ |k_{j^*}| $ tends to $ +\infty $ relatively large with respect to $ j^* $, and $ k_l $ is relatively small with respect to $ |k_{j^*}| $ for all $ l \ne j^* $, then obviously \eqref{VECGUJI3} is weaker; (iii) but if the components of $0\ne k \in \mathbb{Z}_*^{\infty}$ are uniformly bounded, for instance,
      \begin{equation}\label{VECK}
      	k = \left( { \ldots ,0,\mathop 1\limits_{ - {j_ * }} ,0, \ldots ,0,\mathop 1\limits_{{j_ * }} ,0, \ldots } \right),
      \end{equation}
       then as $ j^* $ tends to infinity, \eqref{VECGUJI2} could be  weaker, because  at least for $ k $ in \eqref{VECK},
      \[\sum\limits_{j \in \mathbb{Z}} {|{k_j}|{{\left( {\ln \left\langle j \right\rangle } \right)}^\sigma }}  = 2{\left( {\ln {j_ * }} \right)^\sigma } \leqslant {\left( {2j_ * ^\eta } \right)^{\frac{1}{{1 + \eta }}}}\ln \left( {2j_ * ^\eta } \right) = {\left( {\sum\limits_{j \in \mathbb{Z}} {|{k_j}|{{\left\langle j \right\rangle }^\eta }} } \right)^{\frac{1}{{1 + \eta }}}}\ln \left( {\sum\limits_{j \in \mathbb{Z}} {|{k_j}|{{\left\langle j \right\rangle }^\eta }} } \right).\]
       As a consequence, finding a more general technique to weaken the spatial structure in the infinite-dimensional case still  remains exploring. Actually, if one estimates in the manner of \eqref{VECC}, one can see that our current abstract $ m $-weighted KAM Theorem \ref{VECABSTRACT} is not directly  applicable. This is because,  during the iteration process, the contraction of the complex strip is of  an exponential type. As discussed in \cite{MR1037110,MR4836959}, if the contraction of the complex strip is constructed to be slower, the KAM iteration can be established (but the frequency must not be extremely rational). Therefore, the approach presented in this paper could be further developed---it might be applicable to the spatial structure in \cite{MR4781767} (in other words, it might be applicable to certain weaker nonresonance)---which will be the subject of our future research.

\subsection{Two alternative approaches toward the Qualitative $ C^\infty $ type KAM Theorem \ref{VECT33} and minimizing regularity}\label{VECSEC62}
\renewcommand{\thefootnote}{\fnsymbol{footnote}}
To establish the KAM theorem with $ C^\infty $ initial regularity, we have constructed two nonresonance different from Bourgain's form (see Definition \ref{VECINDIO}) in the proofs of Theorems \ref{VECT2} to \ref{VECT33}, in a P\"oschel's fashion \cite{MR1037110}. Following Professor Procesi's suggestions, we could also construct nonresonance in the manner of Bourgain \cite{MR2180074}, Chierchia-Perfetti \cite{MR1317707} and etc. Here, we provide two alternative approaches to the Qualitative $ C^\infty $ type KAM Theorem \ref{VECT33} (which are also valid for Theorem \ref{VECT2}), one utilizes Bourgain's nonresonance, while another does not strongly rely on the specific construction of nonresonance. Thereby, we have at least \textit{three} different approaches to establish the infinite-dimensional KAM theory with $ C^\infty $ initial regularity. To enhance readers' comprehension of Theorem \ref{VECT33}, we delve into a more detailed quantification\footnote{To preserve as much consistency with the first version of this paper \cite{arXiv:2306.08211} as possible, we prefer to keep the original \textit{qualitative} Theorem \ref{VECT33} in Section \ref{VECSTEATE}, while providing a more \textit{quantitative} version in this section. Indeed, some of the estimates presented here are somewhat `rougher' than those in Theorem \ref{VECT1}, and there is potential for further refinement, particularly in certain explicit cases.} in this section.  Additionally, we provide Remark \ref{VECRE61} to further illustrate the  minimization idea.

\begin{theorem}[\textbf{Quantitative $ C^\infty $ type  KAM}]\label{VECT333}
	For almost all $ \omega \in \mathbb{R}^{\mathbb{Z}} $ and any given approximation function $ \Delta $ larger than any polynomial's type, there exists a uniform approximation function $ u $ depending on $ \Delta $,  as long as  the perturbation $ P $ is sufficiently small in the sense that
	\[\sum\limits_{0 \ne k \in \mathbb{Z}_ * ^\infty } {|\hat P(k)|\Delta({\left\| k \right\|}_{u})}  \ll 1,\]
	where $ {\left\| k \right\|_u}: = \sum\nolimits_{j \in \mathbb{Z}} {u\left( {\left\langle j \right\rangle } \right)\left| {{k_j}} \right|}  $,  the KAM conjugacy in Theorem \ref{VECT1} holds. 
	
	Specifically, if $ \Delta $ is at most Gevrey, i.e., there exists some $0< a <1 $ such that $ \Delta(x)=\mathcal{O}\left(\exp\left(x^a\right)\right) $, then the approximation function $ u $ can be chosen as 	$ u(x)=\mathcal{O}^{\#}\left({{{\log }_x^{-1}}\Delta \left( x \right)}\right)$. Consequently, for any given set of  such approximation functions $ \{\Delta_i\}_{1\leqslant i \leqslant \mathscr{N}} $, the KAM regularity can be `minimized' to 
\begin{equation}\label{VECMIN}
	\sum\limits_{0 \ne k \in \mathbb{Z}_*^\infty } {|\hat P(k)|{{\tilde \Delta }_k}}  \ll 1,
\end{equation}
	where
	\[{{\tilde \Delta }_k} = \mathop {\min }\limits_{1 \leqslant i \leqslant \mathscr{N}} {\Delta _i}\left( {{{\left\| k \right\|}_{\log _x^{ - 1}{\Delta _i}\left( x \right)}}} \right),\;\;0 \ne k \in \mathbb{Z}_*^\infty.\]
\end{theorem}
\begin{remark}\label{VECRE61}
It should be emphasized that the pursuit of minimizing regularity in \eqref{VECMIN} is significant, as it is not always the case that a weaker $ \Delta $ (indicating slower growth) leads to a correspondingly weaker KAM regularity. This is because the spatial structure plays a role as well, such as the form of the norm $ \|k\|_u $. Indeed, as demonstrated in Section \ref{VECSEC61}, for certain specific $0\ne k  \in \mathbb{Z}_*^\infty$ (there are infinitely many), a stronger $ \Delta $ can result in weaker regularity (though this is not the case for all $0\ne k  \in \mathbb{Z}_*^\infty$). We will now present an explicit example with $ \mathscr{N}=2 $ based on the KAM regularity outlined in Theorem \ref{VECT333}.

Set the approximation functions with a sufficiently large number $ C>0 $ as
\begin{equation}\label{VECD12}
	{\Delta _1}\left( x \right) = \exp \left( {C{{\left( {\ln x} \right)}^a}} \right),\;\;a > 1\text{\;\;\;and\;\;\;}{\Delta _2}\left( x \right) = \exp \left( {C\left( {\ln x} \right)\left( {\ln \ln x} \right)} \right).
\end{equation}
It is evident that both of them do not exceed Gevrey's type. Therefore, we can choose approximation functions
\[{u_1}\left( x \right) = \exp \left( {{x^{\frac{1}{{a - 1}}}}} \right),\;\;{u_2}\left( x \right) = \exp \left( {\exp x} \right)\]
such that Theorem \ref{VECT333} holds. From an intuitive perspective, we can require that the Fourier coefficients of the perturbation satisfy the decay conditions
\[|\hat f\left( k \right)| \leqslant {\Delta _1}\left( {C{{\left\| k \right\|}_{{u_1}}}} \right) ^{-1}= \exp \left( { - C{{\ln }^a}\left( {\sum\limits_{j \in \mathbb{Z}} {|{k_j}|\exp \left( {{{\left\langle j \right\rangle }^{\frac{1}{{a - 1}}}}} \right)} } \right)} \right): = \tilde \Delta _k^1\]
and
\begin{align*}
|\hat f\left( k \right)| &\leqslant {\Delta _2}\left( {C{{\left\| k \right\|}_{{u_2}}}} \right) ^{-1}\\
& = \exp \left( { - C\ln \left( {\sum\limits_{j \in \mathbb{Z}} {|{k_j}|\exp \left( {\exp \left( {\left\langle j \right\rangle } \right)} \right)} } \right)\left( {\ln \ln \left( {\sum\limits_{j \in \mathbb{Z}} {|{k_j}|\exp \left( {\exp \left( {\left\langle j \right\rangle } \right)} \right)} } \right)} \right)} \right)\\
: &= \tilde \Delta _k^2,
\end{align*}
simultaneously, i.e., $ |\hat f\left( k \right)| \leqslant {{\tilde \Delta }_k} := \min \{ {\tilde \Delta _k^1,\tilde \Delta _k^2} \} $. Let us discuss several simple cases---\textit{note that these are far from all the possible situations}:
\begin{itemize}
\item[(I)] In the finite-dimensional case, the regularity corresponding to $ {\tilde \Delta _k^2} $ is weaker than that of $ {\tilde \Delta _k^1} $;

\item[(II)] In the infinite-dimensional case, consider that a fixed $ 0\ne k \in \mathbb{Z}_*^\infty$ (infinitely many) has the form 
\[	k = \left( { \ldots ,0,\mathop {k_{-n}}\limits_{ - {n }} ,k_{-n+1}, \ldots ,k_{n-1},\mathop {k_n}\limits_{{n }} ,0, \ldots } \right),\]
provided that $ n \in \mathbb{N}^+ $ is uniformly bounded. Then, it is essentially the same as the  finite-dimensional case, i.e., the regularity corresponding to $ {\tilde \Delta _k^2} $ is weaker than that of $ {\tilde \Delta _k^1} $;
\item[(III)] In the infinite-dimensional case, consider that a fixed $ 0\ne k \in \mathbb{Z}_*^\infty$ (infinitely many) has the form 
\[	k = \left( { \ldots ,0,\mathop {k_{-j_*}}\limits_{ - {j_ * }} ,0, \ldots ,0,\mathop {k_{j_*}}\limits_{{j_ * }} ,0, \ldots } \right),\]
where $ j_* \to +\infty$ as $ \min \{ {{{\left\| k \right\|}_{{u_1}}},{{\left\| k \right\|}_{{u_2}}}} \} \to  + \infty  $. Next, we only need to analyze the order within the exponential functions in $ \tilde \Delta _k^1 $ and $ \tilde \Delta _k^2$, i.e., 
\[{{{\ln }^a}\left( {\sum\limits_{j \in \mathbb{Z}} {|{k_j}|\exp \left( {{{\left\langle j \right\rangle }^{\frac{1}{{a - 1}}}}} \right)} } \right)}\text{\;and\;}
 {\ln \left( {\sum\limits_{j \in \mathbb{Z}} {|{k_j}|\exp \left( {\exp \left( {\left\langle j \right\rangle } \right)} \right)} } \right)\left( {\ln \ln \left( {\sum\limits_{j \in \mathbb{Z}} {|{k_j}|\exp \left( {\exp \left( {\left\langle j \right\rangle } \right)} \right)} } \right)} \right)}.\]
In other words, we only need to compare in this case the order of 
\begin{equation}\label{VECBJ2}
{\ln ^a}\Big( {|2{k_{{j_ * }}}|\exp \big( {j_ * ^{\frac{1}{{a - 1}}}} \big)} \Big)\text{\;\;and\;\;}\ln \left( {2|{k_{{j_ * }}}|\exp \left( {\exp \left( {{j_ * }} \right)} \right)} \right) \cdot \ln \ln \left( {2|{k_{{j_ * }}}|\exp \left( {\exp \left( {{j_ * }} \right)} \right)} \right).
\end{equation}
\begin{itemize}
\item[(i)] If $ \frac{{\ln |{k_{{j_ * }}}|}}{{j_ * ^{1/\left( {a - 1} \right)}}} = \mathcal{O}\left( 1 \right) $, then for \eqref{VECBJ2}, we have
\[{\rm LHS} = {\mathcal{O}^\# }\left( {j_ * ^{\frac{a}{{a - 1}}}} \right) = o\left( {\exp \left( {{j_ * }} \right)} \right),\;\;{\rm RHS} \geqslant \ln \left( {\exp \left( {\exp \left( {{j_ * }} \right)} \right)} \right) = \exp \left( {{j_ * }} \right).\]
This implies that the regularity corresponding to $ {\tilde \Delta _k^1} $ is weaker than that of $ {\tilde \Delta _k^2} $;
\item[(ii)] If $ \frac{{j_ * ^{1/\left( {a - 1} \right)}}}{{\ln |{k_{{j_ * }}}|}} = o\left( 1 \right) $ and $ \frac{{\ln |{k_{{j_ * }}}|}}{{\exp \left( {{j_ * }/a} \right)j_ * ^{1/a}}} = o\left( 1 \right) $, then for \eqref{VECBJ2}, we have
\[{\rm LHS} = {\mathcal{O}^\# }\left( {{{\ln }^a}|{k_{{j_ * }}}|} \right) = o\left( {\exp \left( {{j_ * }} \right){j_ * }} \right),\;\;{\rm RHS} = {\mathcal{O}^\# }\left( {\exp \left( {{j_ * }} \right){j_ * }} \right).\]
This implies that the regularity corresponding to $ {\tilde \Delta _k^1} $ is weaker than that of $ {\tilde \Delta _k^2} $;
\item[(iii)] If $ \frac{{\ln |{k_{{j_ * }}}|}}{{\exp \left( {{j_ * }/a} \right)j_ * ^{1/a}}} = {\mathcal{O}^\# }\left( 1 \right) $,  then for \eqref{VECBJ2}, we have
\[{\rm LHS} = {\mathcal{O}^\# }\left( {{{\ln }^a}|{k_{{j_ * }}}|} \right),\;\;{\rm RHS} = {\mathcal{O}^\# }\left( {\exp \left( {{j_ * }} \right){j_ * }} \right) = {\mathcal{O}^\# }\left( {{{\ln }^a}|{k_{{j_ * }}}|} \right).\]
This implies that the regularity corresponding to $ {\tilde \Delta _k^1} $ is `similar' to that of $ {\tilde \Delta _k^2} $ (if a comparison of the strength of regularity is indeed necessary, one also needs to delve into the specific values of the constant $ C>0 $ in \eqref{VECD12}; however, for the sake of simplicity in this context, we will treat them as being the same);

\item[(iv)] If $ \frac{{\ln |{k_{{j_ * }}}|}}{{\exp \left( {{j_ * }} \right)}} = \mathcal{O}\left( 1 \right) $ and $ \frac{{\exp \left( {{j_ * }/a} \right)j_ * ^{1/a}}}{{\ln |{k_{{j_ * }}}|}} = o\left( 1 \right) $, then for \eqref{VECBJ2}, we have
\[{\rm LHS} = {\mathcal{O}^\# }\left( {{{\ln }^a}|{k_{{j_ * }}}|} \right),\;\;{\rm RHS} = {\mathcal{O}^\# }\left( {\exp \left( {{j_ * }} \right){j_ * }} \right) = o\left( {{{\ln }^a}|{k_{{j_ * }}}|} \right).\]
This implies that the regularity corresponding to $ {\tilde \Delta _k^2} $ is weaker than that of $ {\tilde \Delta _k^1} $;
\item[(v)] If $ \frac{{\exp \left( {{j_ * }} \right)}}{{\ln |{k_{{j_ * }}}|}} = o\left( 1 \right) $,  then for \eqref{VECBJ2}, we have
\[{\rm LHS} = {\mathcal{O}^\# }\left( {{{\ln }^a}|{k_{{j_ * }}}|} \right),\;\;{\rm RHS} = {\mathcal{O}^\# }\left( {\ln |{k_{{j_ * }}}| \cdot \ln \ln |{k_{{j_ * }}}|} \right) = o\left( {{{\ln }^a}|{k_{{j_ * }}}|} \right).\]
This implies that the regularity corresponding to $ {\tilde \Delta _k^2} $ is weaker than that of $ {\tilde \Delta _k^1} $;
\end{itemize}

\item[(IV)] In the infinite-dimensional case, consider that a fixed $ 0\ne k \in \mathbb{Z}_*^\infty$ (infinitely many) has the form 
\[k = \left( { \ldots ,0,\mathop {\frac{1}{2}}\limits_{ - {j_*}} ,0, \ldots ,0,\mathop {{k_{ - \ln {j_*}}}}\limits_{ - \ln {j_*}} ,0, \ldots ,0,\mathop {{k_{\ln {j_*}}}}\limits_{\ln {j_*}} ,0, \ldots ,0,\mathop {\frac{1}{2}}\limits_{{j_*}} ,0, \ldots } \right),\]
where $ j_* \to +\infty$ as $ \min \{ {{{\left\| k \right\|}_{{u_1}}},{{\left\| k \right\|}_{{u_2}}}} \} \to  + \infty  $.  In this case, we only need to compare in this case the order of 
\[{\ln ^a}\left( {\exp \big( {j_ * ^{\frac{1}{{a - 1}}}} \big) + |{k_{\ln {j_*}}}|\exp \left( {{{\left( {\ln {j_*}} \right)}^{\frac{1}{{a - 1}}}}} \right)} \right)\]
and
\[\ln \left( {\exp \left( {\exp \left( {{j_ * }} \right)} \right) + |{k_{\ln {j_*}}}|\exp \left( {{j_*}} \right)} \right) \cdot \ln \ln \left( {\exp \left( {\exp \left( {{j_ * }} \right)} \right) + |{k_{\ln {j_*}}}|\exp \left( {{j_*}} \right)} \right).\]
\textit{This is much more complex than the case in (III).}
 \begin{itemize}
\item [(i)] ......
\item [.......]
 \end{itemize}
%\item[(IV)] In the infinite-dimensional case, consider that a fixed $ 0\ne k \in \mathbb{Z}_*^\infty$ (infinitely many) has the form 
%\[	k = \left( {0, \ldots ,0,\mathop 0\limits_{ - {j_ * }} ,0, \ldots ,0,\mathop {k_{j_*}}\limits_{{j_ * }} ,0, \ldots } \right),\]
%where $ j_* \to +\infty$ as $ \min \{ {{{\left\| k \right\|}_{{u_1}}},{{\left\| k \right\|}_{{u_2}}}} \} \to  + \infty  $.

\item[(V)]  ......

\end{itemize}

In conclusion, it is evident that even in the most straightforward minimization case (i.e., $ \mathscr{N}=2 $), we must employ  complicated asymptotic analysis to determine the strength or weakness of regularity, even in the most basic cases. This necessity intensifies when faced with slightly more intricate situations (e.g., arbitrary $ 0 \ne k \in \mathbb{Z}_*^\infty $). Therefore, the idea of minimization is extremely significant in quantitatively reducing KAM regularity within almost periodic settings.

\end{remark}
\begin{proof}[Proof of Theorem \ref{VECT333}]
	Before proceeding with the analysis, we first note that as long as $ \Delta $ does not exceed Gevrey's type, the balancing sequence $\{d_j\}_{j \in \mathbb{N}}$ can be appropriately chosen (growing at most exponentially) to satisfy the boundedness condition in (II) provided in the Abstract $ m $-weighted KAM Theorem \ref{VECABSTRACT}. Therefore, in the $C^\infty$ case, the most critical aspect is achieving a balance between the small divisor estimate and regularity. An important idea is that by enhancing the weight $ u $ within the norm  $ \|k \|_u $, we can reduce the influence of small divisors. Due to the fact that we have carried out detailed quantitative constructions in the proofs of all previous theorems, we will slightly simplify the analysis here.

\textbf{Approach I: }
For the given `super-polynomial' approximating function $ \Delta $, we first leave the approximating function $ u $ undetermined and define the norm of $ k \in \mathbb{Z}_*^\infty $ as  $ {\left\| k \right\|_u}: = \sum\nolimits_{j \in \mathbb{Z}} {\left| {{k_j}} \right|u\left( {\left\langle j \right\rangle } \right)}   $. Let $ m $ denote the number of nonzero components of $ k $. Note that if $ {\left\| k \right\|_u} \leqslant N$, we have at least that $ m \leqslant {u^{ - 1}}\left( N \right) $, because
\[1 \cdot u\left( m \right) \leqslant \sum\nolimits_{j \in \mathbb{Z}} {\left| {{k_j}} \right|u\left( {\left\langle j \right\rangle } \right)}  = {\left\| k \right\|_u} \leqslant N.\]
We shall emphasize that this is not very precise in some cases, see Lemma \ref{VECSM} for instance. However, here we only need to qualitatively obtain an upper bound for $ m $. Next, we assume that $ \omega $ satisfies the following Bourgain's nonresonance for which almost all $ \omega \in \mathbb{R}^\mathbb{Z} $ hold (see Definition \ref{VECINDIO})
	\begin{equation}\notag 
	\left| {k  \cdot \omega } \right| > \gamma^* \prod\limits_{j \in \mathbb{Z}} {\frac{1}{{\left( {1 + {{\left| {{k _j}} \right|}^\mu }{{\left\langle j \right\rangle }^\mu }} \right)}}} ,\;\;\forall 0\ne k  \in {\mathbb{Z}_*^\infty},\;\; \gamma^*>0.
\end{equation}
Note that if $ 	u\left( x \right) \geqslant x $,
 we have
\begin{align}
	\mathop {\sup }\limits_{0 < {{\left\| k \right\|}_u} \leqslant N} {\left| {k \cdot \omega } \right|^{ - 1}} &\leqslant {\gamma ^*}^{ - 1}\prod\limits_{j \in \mathbb{Z}} {\left( {1 + {{\left| {{k_j}} \right|}^\mu }{{\left\langle j \right\rangle }^\mu }} \right)} \notag \\
	& = {\gamma ^*}^{ - 1}\exp \left( {\sum\limits_{j \in \mathbb{Z}} {\ln \left( {1 + {{\left| {{k_j}} \right|}^\mu }{{\left\langle j \right\rangle }^\mu }} \right)} } \right)\notag\\
	& = \exp \left( {\mathcal{O}\left( {\sum\limits_{j \in \mathbb{Z}} {\ln \left( {1 + \left| {{k_j}} \right|\left\langle j \right\rangle } \right)} } \right)} \right)\notag\\
\label{VECBOU1}	& \leqslant \exp \left( {\mathcal{O}\left( {\sum\limits_{j \in \mathbb{Z}} {\ln \left( {1 + \left| {{k_j}} \right|u\left\langle j \right\rangle } \right)} } \right)} \right)\\
	&  \leqslant \exp \left( {\mathcal{O}\left( {\# \left\{ {0 \ne k \in \mathbb{Z}_ * ^\infty :{{\left\| k \right\|}_u} \leqslant N} \right\}} \right) \cdot \mathop {\sup }\limits_{0 < {{\left\| k \right\|}_u} \leqslant N} \ln \left( {1 + \sum\limits_{s \in \mathbb{Z}} {\left| {{k_s}} \right|u\left( {\left\langle s \right\rangle } \right)} } \right)} \right)\notag\\
\label{VECBOU2}	&  \leqslant \exp \left( {\mathcal{O}\left( {{u^{ - 1}}\left( N \right)} \right) \cdot \mathop {\sup }\limits_{0 < {{\left\| k \right\|}_u} \leqslant N} \ln \left( {1 + {{\left\| k \right\|}_u}} \right)} \right)\\
\label{VECSMA} 	& = \exp \left( {\mathcal{O}\left( {{u^{ - 1}}\left( N \right)\ln \left( {1 + N} \right)} \right)} \right),
\end{align}
where \eqref{VECBOU1} uses $ 	u\left( x \right) \geqslant x $, and  \eqref{VECBOU2} uses $ m \leqslant {u^{ - 1}}\left( N \right) $. To  eliminate the growth caused by small divisors in \eqref{VECSMA} by the weight $ m $ with $ m_k= \Delta \left( {{{\left\| k \right\|}_u}} \right)$, we impose that 
\[\exp \left( {{u^{ - 1}}\left( N \right)\ln N} \right) \leqslant \mathcal{O}\left( {\Delta \left( {{{\left\| k \right\|}_u}} \right){|_{{{\left\| k \right\|}_u} = N}}} \right) = \mathcal{O}\left( {\Delta \left( N \right)} \right),\]
and this will finally implies  the boundedness condition in (II).  Hence, $ {u^{ - 1}}\left( N \right)\ln N = \mathcal{O}\left( {\ln \left( {\Delta \left( N \right)} \right)} \right)$, indicating that taking $ u\left( x \right) = {\mathcal{O}^\# }\left( {\log _x^{ - 1}\Delta \left( x \right)} \right)\geqslant x $ is sufficient. As a consequence, we obtain the desired KAM regularity as
\[\sum\limits_{0 \ne k \in \mathbb{Z}_*^\infty } {|\hat P(k)|{m_k}}  = \sum\limits_{0 \ne k \in \mathbb{Z}_*^\infty } {|\hat P(k)|\Delta \left( {{{\left\| k \right\|}_u}} \right)}  \ll 1.\]

\textbf{Approach II: } Similar to that in Approach I, we still leave the approximating function $ u $ undetermined and introduce the norm of $ k \in \mathbb{Z}_*^\infty $ as  $ {\left\| k \right\|_u}: = \sum\nolimits_{j \in \mathbb{Z}} {\left| {{k_j}} \right|u\left( {\left\langle j \right\rangle } \right)}   $. Hence, the number of nonzero components of $ k $,  denoted as $ m $,  at least satisfies  $ m \leqslant {u^{ - 1}}\left( N \right) $. In the subsequent analysis, we do not employ any previously introduced infinite-dimensional nonresonance. Instead, we rely on a crucial fact: for any fixed $ 0 \ne k \in \mathbb{Z}_*^\infty $, we do not actually encounter the infinite case for $ \left| {k \cdot \omega } \right| $. Observe that in the $n $-dimensional case, almost all $ \omega\in \mathbb{R}^n $ satisfy a Diophantine estimate of the type
\[\left| {k \cdot \omega } \right| \geqslant \frac{{{\gamma ^ * }}}{{{{\left| k \right|}^n}}},\;\;\forall 0 \ne k \in {\mathbb{Z}^n},\;\;{\gamma ^ * } > 0,\]
where $\gamma^*$ is independent of $ n $. Then, for any $ 0 \ne k \in \mathbb{Z}_*^\infty  $ with $ {\left\| k \right\|_u} \leqslant N$, it universally holds that
\begin{equation}\notag 
	\left| {k \cdot \omega } \right| \geqslant \frac{{{\gamma ^ * }}}{{{{\left| k \right|}^m}}} \geqslant \frac{{{\gamma ^ * }}}{{\left\| k \right\|_u^m}} \geqslant \frac{{{\gamma ^ * }}}{{{N^m}}} \geqslant \frac{{{\gamma ^ * }}}{{{N^{{u^{ - 1}}\left( N \right)}}}} = {\gamma ^ * }\exp \left( { - {u^{ - 1}}\left( N \right)\ln N} \right).
\end{equation}
Now, we need to balance the small divisor estimate 
\begin{equation}\label{VECGJ2}
	\mathop {\sup }\limits_{0 < {{\left\| k \right\|}_u} \leqslant N} {\left| {k \cdot \omega } \right|^{ - 1}} \leqslant {\gamma ^ * }^{ - 1}\exp \left( {{u^{ - 1}}\left( N \right)\ln N} \right)
\end{equation}
and the weight $ m $, i.e., $ m_k= \Delta \left( {{{\left\| k \right\|}_u}} \right)$. We hope that the weight can eliminate the growth caused by small divisors in \eqref{VECGJ2}; therefore, recalling the boundedness condition in (II), let 
\[\exp \left( {{u^{ - 1}}\left( N \right)\ln N} \right) \leqslant \mathcal{O}\left( {\Delta \left( {{{\left\| k \right\|}_u}} \right){|_{{{\left\| k \right\|}_u} = N}}} \right) = \mathcal{O}\left( {\Delta \left( N \right)} \right).\]
Then, $ {u^{ - 1}}\left( N \right)\ln N = \mathcal{O}\left( {\ln \left( {\Delta \left( N \right)} \right)} \right)$, indicating that taking $ u\left( x \right) = {\mathcal{O}^\# }\left( {\log _x^{ - 1}\Delta \left( x \right)} \right) $ will meet the requirements. Finally, the KAM regularity is expressed as
\[\sum\limits_{0 \ne k \in \mathbb{Z}_*^\infty } {|\hat P(k)|{m_k}}  = \sum\limits_{0 \ne k \in \mathbb{Z}_*^\infty } {|\hat P(k)|\Delta \left( {{{\left\| k \right\|}_u}} \right)}  \ll 1,\]
as desired.
\end{proof}

 \section*{Acknowledgements} 
This work was supported in part by the National Natural Science Foundation of China (Grant Nos. 12071175 and 12471183). The authors express their profound gratitude to Professor Michela Procesi for her meticulous review of the first version of this work \cite{arXiv:2306.08211} and for providing insightful suggestions that significantly enhanced its quality.


\begin{thebibliography}{99}
\bibitem{MR2350326}
{\sc J.~Albrecht}, {\em On the existence of invariant tori in nearly-integrable Hamiltonian systems with finitely differentiable perturbations}, Regul. Chaotic Dyn., 12 (2007), pp.~281--320. \url{https://doi.org/10.1134/S1560354707030033}	



\bibitem{MR4105369}
{\sc V. Arnaiz}, {\em Spectral stability and semiclassical measures for renormalized KAM systems}, Nonlinearity 33 (2020), pp.~2562--2591. \url{https://doi.org/10.1088/1361-6544/ab7724}


\bibitem{MR0140699}
{\sc V.~I. Arnold}, {\em Small denominators. {I}. {M}apping the circle onto
	itself}, Izv. Akad. Nauk SSSR Ser. Mat., 25 (1961), pp.~21--86.

\bibitem{MR0163025}
{\sc V.~I. Arnold}, {\em Proof of a theorem of {A}. {N}. {K}olmogorov on the
	preservation of conditionally periodic motions under a small perturbation of
	the {H}amiltonian}, Uspehi Mat. Nauk, 18 (1963), pp.~13--40.	
	
	\bibitem{MR0170705}
	{\sc V.~I. Arnold}, {\em Small denominators and problems of stability of motion
		in classical and celestial mechanics}, Uspehi Mat. Nauk, 18 (1963),
	pp.~91--192.
	

\bibitem{MR4215196}	
{\sc M.	Berti, T. Kappeler,  R. Montalto}, {\em 	Large KAM tori for quasi-linear perturbations of KdV},
	Arch. Ration. Mech. Anal. 239 (2021), pp.~1395--1500.  \url{https://doi.org/10.1007/s00205-020-01596-2}


	
\bibitem{MR4091501}
{\sc L. Biasco, J. Massetti, M. Procesi}, {\em An abstract Birkhoff normal form theorem and exponential type stability of the 1d NLS}, Comm. Math. Phys. 375 (2020),  pp.~2089--2153. \url{https://doi.org/10.1007/s00220-019-03618-x}
	
	
	\bibitem{MR4666300}
{\sc L. Biasco, J. Massetti, M. Procesi},
{\em Small amplitude weak almost periodic solutions for the 1D NLS}, 
Duke Math. J. 172 (2023), pp.~2643--2714. \url{https://doi.org/10.1215/00127094-2022-0089}
	
	\bibitem{MR4412075}
	{\sc A. Bounemoura}, {\em Optimal linearization of vector fields on the torus in non-analytic Gevrey classes},
	Ann. Inst. H. Poincar\'{e} C Anal. Non Lin\'{e}aire 39 (2022), pp.~501--528. \url{https://doi.org/10.4171/aihpc/12}

	
	\bibitem{MR1316975}
	{\sc J. Bourgain}, {\em Construction of quasi-periodic solutions for Hamiltonian perturbations of linear equations and applications to nonlinear PDE}, Int. Math. Res. Not. 11 (1994), pp.~475--497. \url{https://doi.org/10.1155/S1073792894000516}
	
	
	\bibitem{MR1345016}
	{\sc J. Bourgain}, {\em Construction of periodic solutions of nonlinear wave equations in higher dimension},  Geom. Funct. Anal. 5 (1995), pp.~629--639. \url{https://doi.org/10.1007/BF01902055}
	
	
\bibitem{MR2180074}
{\sc J. Bourgain}, {\em On invariant tori of full dimension for 1D periodic NLS}, J. Funct. Anal. 229 (2005), pp.~62--94. \url{https://doi.org/10.1016/j.jfa.2004.10.019}
	
	
	


\bibitem{MR3061774}
{\sc C.-Q. Cheng, L. Wang}, 
{\em Destruction of Lagrangian torus for positive definite Hamiltonian systems}, 
Geom. Funct. Anal. 23 (2013),  pp.~848--866.
\url{https://doi.org/10.1007/s00039-013-0213-z}	


\bibitem{MR1317707}
{\sc L. Chierchia,  P. Perfetti}, 
{\em Second order Hamiltonian equations on {${\bf T}^\infty$} and almost-periodic solutions}.
J. Differential Equations 116 (1995),  pp.~172--201. \url{https://doi.org/10.1006/jdeq.1995.1033}


\bibitem{MR4570702}
{\sc L. Chierchia, M. Procesi}, {\em Kolmogorov-Arnold-Moser (KAM) theory for finite and infinite dimensional systems} Perturbation theory--mathematics, methods and applications, pp.~247--289, Encycl. Complex. Syst. Sci., Springer, New York, [2022] \copyright 2022. \url{https://doi.org/10.1007/978-1-0716-2621-4_302}

\bibitem{MR4800925}
{\sc H. Cong}, {\em The existence of full dimensional KAM tori for nonlinear Schr\"odinger equation},  Math. Ann. 390 (2024), pp.~671--719. \url{https://doi.org/10.1007/s00208-023-02782-9}

\bibitem{MR4781767}
 {\sc L. Corsi, G. Gentile, M. Procesi},  {\em Maximal tori in infinite-dimensional Hamiltonian systems: a renormalisation group approach}, Regul. Chaotic Dyn. 29 (2024), pp.~677--715. \url{https://doi.org/10.1134/S1560354724540025}

\bibitem{MR1001032}
{\sc L. H. Eliasson}, {\em Perturbations of stable invariant tori for Hamiltonian systems}, 
Ann. Scuola Norm. Sup. Pisa Cl. Sci. (4) 15 (1988),  pp.~115--147 (1989). \url{http://www.numdam.org/item?id=ASNSP_1988_4_15_1_115_0}


\bibitem{MR3579706}
{\sc L. H. Eliasson, B. Gr\'ebert,  S. B. Kuksin}, {\em KAM for the nonlinear beam equation}, Geom. Funct. Anal. 26 (2016),  pp.~1588--1715. \url{https://doi.org/10.1007/s00039-016-0390-7}





\bibitem{MR1279471}
 {\sc G. Forni},
{\em Analytic destruction of invariant circles}, 
Ergodic Theory Dynam. Systems 14 (1994), pp.~267--298. \url{https://doi.org/10.1017/S0143385700007872}



\bibitem{MR4577970}
{\sc M. Guardia, Z. Hani, E. Haus, A. Maspero, M. Procesi}, {\em Strong nonlinear instability and growth of Sobolev norms near quasiperiodic finite gap tori for the 2D cubic NLS equation},  J. Eur. Math. Soc. (JEMS) 25 (2023), pp.~1497--1551. \url{https://doi.org/10.4171/jems/1200}

\bibitem{MR0874026}
{\sc M.-R.~Herman}, {\em Sur les courbes invariantes par les
	diff\'{e}omorphismes de l'anneau. {V}ol. 2}, Ast\'{e}risque, 103 (1986),
pp.~248.

\bibitem{MR3269186}
{\sc B. A. Khesin, S. B. Kuksin, D.  Peralta-Salas}, {\em KAM theory and the 3D Euler equation}, Adv. Math. 267 (2014), pp.~498--522.
\href{https://doi.org/10.1016/j.aim.2014.09.009}{https://doi.org/10.1016/j.aim.2014.09.009}



\bibitem{MR4623276}
{\sc S. Hu}, {\em Quasi-periodic solutions for Schr\"odinger equation with finite smooth quasi-periodic forcing},
SIAM J. Appl. Dyn. Syst. 22 (2023),  pp.~1945--1982. \url{https://doi.org/10.1137/22M1523649}

\bibitem{MR4735770}
{\sc H. Koch},
{\em Attracting invariant tori and analytic conjugacies}, J. Differential Equations 398 (2024), pp.~395--415. \url{https://doi.org/10.1016/j.jde.2024.04.008}


\bibitem{MR0068687}
{\sc A. N. Kolmogorov}, {\em On conservation of conditionally periodic motions for a small change in Hamilton's function},  Dokl. Akad. Nauk SSSR (N.S.) 98, (1954), pp.~527--530.

\bibitem{MR4104457}
{\sc C.~E.~Koudjinan}, {\em A {KAM} theorem for finitely differentiable
	{H}amiltonian systems}, J. Differential Equations, 269 (2020), pp.~4720--4750. \url{https://doi.org/10.1016/j.jde.2020.03.044}

\bibitem{MR0911772}
{\sc S. B. Kuksin}, {\em Hamiltonian perturbations of infinite-dimensional linear systems with imaginary spectrum}, Funktsional. Anal. i Prilozhen. 21 (1987), pp.~22--37.

\bibitem{MR2070057}
{\sc S. B. Kuksin}, {\em Fifteen years of KAM for PDE}, Geometry, topology, and mathematical physics, pp.~237--258, Amer. Math. Soc. Transl. Ser. 2, 212, Adv. Math. Sci., 55, Amer. Math. Soc., Providence, RI, 2004. \url{https://doi.org/10.1090/trans2/212/12}

\bibitem{MR1370761}
{\sc S. B. Kuksin, J. P\"oschel}, {\em Invariant Cantor manifolds of quasi-periodic oscillations for a nonlinear Schr\"odinger equation}, Ann. of Math. (2) 143 (1996),  pp.~149--179. \url{https://doi.org/10.2307/2118656}


\bibitem{MR3960504}
{\sc X. Li,  Z. Shang}, 
{\em On the existence of invariant tori in non-conservative dynamical systems with degeneracy and finite differentiability}, 
Discrete Contin. Dyn. Syst. 39 (2019),  pp.~4225--4257. \url{https://doi.org/10.3934/dcds.2019171}

\bibitem{MR0967638}
{\sc J. N. Mather},  Destruction of invariant circles.
Ergodic Theory Dynam. Systems 8$^*$ (1988), pp.~199--214. \url{https://doi.org/10.1017/S0143385700009421}

\bibitem{MR4201442}
{\sc R. Montalto, M. Procesi},  {\em Linear {S}chr\"{o}dinger equation with an almost periodic	potential}, SIAM J. Math. Anal. 53 (2021),  pp.~386--434. \url{https://doi.org/10.1137/20M1320742}

\bibitem{MR0147741}
{\sc J.~Moser},
{\em On invariant curves of area-preserving mappings of an annulus},
Nachr. Akad. Wiss. G\"ottingen Math.-Phys. Kl. II 1962 (1962), pp.~1--20.


\bibitem{MR0199523}
{\sc J.~Moser}, {\em A rapidly convergent iteration method and non-linear	partial differential equations. {I}}, Ann. Scuola Norm. Sup. Pisa Cl. Sci. (3), 20 (1966), pp.~265--315.


\bibitem{MR0206461}
{\sc J.~Moser}, {\em A rapidly convergent iteration method and non-linear
	differential equations. {II}}, Ann. Scuola Norm. Sup. Pisa Cl. Sci. (3), 20
(1966), pp.~499--535.


\bibitem{MR0208078}
{\sc J.~Moser}, {\em Convergent series expansions for quasi-periodic motions}, Math. Ann. 169 (1967), pp.~136--176. \url{https://doi.org/10.1007/BF01399536}

\bibitem{MR2104602}
{\sc G. Popov}, {\em KAM theorem for Gevrey Hamiltonians}, Ergodic Theory Dynam. Systems 24 (2004),  pp.~1753--1786. \url{https://doi.org/10.1017/S0143385704000458}

\bibitem{MR1037110}
{\sc J.~P\"{o}schel}, {\em Small divisors with spatial structure in infinite-dimensional Hamiltonian systems}, Comm. Math. Phys. 127 (1990), pp.~351--393. \url{http://projecteuclid.org/euclid.cmp/1104180143}

\bibitem{PoarXiv}
{\sc J.~P\"{o}schel}, {\em {KAM} below $ \mathrm{C}^n $}. \url{https://arxiv.org/abs/2104.01866}




\bibitem{MR4456121}
{\sc M. Procesi, L. Stolovitch}, {\em About linearization of infinite-dimensional Hamiltonian systems}, Comm. Math. Phys. 394 (2022),  pp.~39--72. \url{https://doi.org/10.1007/s00220-022-04398-7}


\bibitem{MR426054}
{\sc H. R\"{u}ssmann}, {\em Note on sums containing small divisors}, Comm. Pure Appl. Math. 29 (1976),  pp.~755--758. \url{https://doi.org/10.1002/cpa.3160290615}

\bibitem{MR2111297}
{\sc D. A. Salamon}, {\em The Kolmogorov-Arnold-Moser theorem}, Math. Phys. Electron. J. 10 (2004), Paper 3, pp.~37.


\bibitem{MR300311}
{\sc F. Takens},  {\em A $ C^1 $ counterexample to Moser's twist theorem}, 
Nederl. Akad. Wetensch. Proc. Ser. A 74.
Indag. Math. 33 (1971), pp.~378--386. 


\bibitem{MR4836959}
{\sc Z. Tong, Y. Li},  {\em KAM Theorem towards the Weak Diophantine Condition},  Commun. Math. Sci. 22 (2024), pp.~2311--2343. \url{https://dx.doi.org/10.4310/CMS.240918203945}



\bibitem{TLCCM}
{\sc Z. Tong, Y. Li}, {\em Towards continuity: Universal frequency-preserving KAM persistence and remaining regularity in nearly non-integrable Hamiltonian systems}, to appear in Commun. Contemp. Math. \url{https://doi.org/10.1142/S021919972450038X}	

\bibitem{arXiv:2306.08211}
{\sc Z. Tong, Y. Li},  {\em Towards sharp regularity: Full dimensional tori in $ C^\infty $ vector fields over $ \mathbb{T}^\infty $}. Preprint. \url{https://doi.org/10.48550/arXiv.2306.08211}



\bibitem{MR4385768}
{\sc L. Wang}, {\em Quantitative destruction of invariant circles},
Discrete Contin. Dyn. Syst. 42 (2022), pp.~1569--1583.
\url{https://doi.org/10.3934/dcds.2021164}

\bibitem{WLARXIV}
{\sc L. Wang}, {\em Quantitative Destruction and Persistence of Lagrangian Torus in Hamiltonian Systems}. \url{https://arxiv.org/abs/2312.01695}


\bibitem{MR1040892}
{\sc C. E. Wayne}, {\em Periodic and quasi-periodic solutions of nonlinear wave equations via KAM theory}, Comm. Math. Phys. 127 (1990), pp.~479--528. \url{http://projecteuclid.org/euclid.cmp/1104180217}

\bibitem{MR4273175}
{\sc M. Zhang, J. Si}, {\em Construction of quasi-periodic solutions for the quintic
	{S}chr\"{o}dinger equation on the two-dimensional torus
	{$\Bbb{T}^2$}}, Trans. Amer. Math. Soc. 374 (2021), pp.~4711--4780. \url{https://doi.org/10.1090/tran/8329}

\bibitem{MR4532963}
{\sc M. Zhang, J. Si}, {\em K{AM} tori for the two-dimensional completely resonant
	{S}chr\"{o}dinger equation with the general nonlinearity}, J. Math. Pures Appl. (9) 170 (2023), pp.~150--230. \url{https://doi.org/10.1016/j.matpur.2022.12.006}





\end{thebibliography}
\end{document}